\documentclass[12pt,a4paper]{amsart}

\usepackage{amsmath,amssymb}
\usepackage[margin=2cm]{geometry}
\usepackage{graphicx}
\usepackage{verbatim}
\numberwithin{equation}{section}


\newtheorem{lema}{Lemma}[section]
\newtheorem{theo}[lema]{Theorem}
\newtheorem{prop}[lema]{Proposition}

\newtheorem{rema}[lema]{Remark}

\theoremstyle{definition}
\newtheorem{defi}[lema]{Definition}

\theoremstyle{remark}


\newcommand{\sgn}{\operatorname{sgn}}


\title[Piecewise linear differential equations with three zones]{Upper bound of  the number of limit cycles for symmetric scalar piecewise linear differential equations with three zones}

\author{J.L. Bravo, V. Carmona, M. Fern\'{a}ndez, and I. Ojeda}
\address{J.L. Bravo, Departamento de Matem\'{a}ticas,
Universidad de Extremadura, 06006 Badajoz, Spain}
\email{trinidad@unex.es}
\address{Dpto. Matem\'{a}tica Aplicada II \& IMUS, Universidad de Sevilla, Escuela Polit\'ecnica Superior.
Calle Virgen de \'Africa 7, 41011 Sevilla, Spain.} 
 \email{vcarmona@us.es} 
\address{M. Fern\'{a}ndez,  Departamento de Matem\'{a}ticas, Universidad de Extremadura, 06006 Badajoz, Spain}
\email{ghierro@unex.es}
\address{I. Ojeda,  Departamento de Matem\'{a}ticas, Universidad de Extremadura, 06006 Badajoz, Spain }
\email{ojedamc@unex.es}

\subjclass[2020]{Primary 34C25. Secondary: 34A34, 37C27, 37G15.}
\keywords{Linear scalar piecewise odes; Periodic solution; Limit cycle; Abel equation; Melnikov; Averaging theory}

\thanks{
J.L.B., M.F., and I.O. are partially supported by the project PID2023-151974NB-I00 funded by MICIU/
AEI/10.13039/ 501100011033/FEDER, UE. This work has been partially funded by the Junta de Extremadura through projects GR24042 (J.L.B. and M.F.) and GR24068 (I.O.), partially funded by the European Regional Development Fund (ERDF) ``A way to make Europe''.
V.C. is partially supported by the Ministerio de Ciencia e Innovaci\'on, Plan Nacional I+D+I cofinanced with FEDER funds, in the frame of the project PID2021-123200NB-I00, and the Consejer\'{i}a de Educaci\'{o}n y Ciencia de la Junta de Andaluc\'{i}a (TIC-0130, P12-FQM-1658).}

\begin{document}

\begin{abstract}
The study of the dynamics of a continuous observable and non-controllable three-dimensional symmetric piecewise linear system with three zones can be reduced to the study of the existence of limit cycles for the piecewise differential equation $x'=ax+(b-a)\mathop{\rm sat}(x)+\mu\sin t$, where $\operatorname{sat}$ stands for the normalized saturation function.

This paper proves that the number of limit cycles of the equation is finite independently of $a,b,\mu$. Moreover, it is proven that the maximum number of limit cycles is exactly one or three for certain values of the parameters.

Using Melnikov theory for a certain deformation of the equation, it is proven that for small values of the perturbation parameter, there are exactly three, five or one limit cycles, depending on the values of $\mu$. This strengthens the conjecture that the equation has at most five limit cycles.
\end{abstract}

\maketitle

\section{Introduction and main results}

Within the framework of temporal evolution systems and the study of nonlinear oscillators, non-smooth differential systems naturally emerge, and, as a particular case, piecewise linear dynamic systems. These latter systems, beyond being mathematically attractive structures in themselves, allow the modeling of different electronic \cite{Carmona20053153,CarmonaEtAl02,Freire1999895} or mechanical devices \cite{PogorilyiTrivailoJazar+2014+189+196,6371308} and are capable of capturing the nonlinear dynamics of the differentiable systems \cite{Arneodo1982171,Carmona2010,Carmona20081032, Tresser84}. Furthermore, when continuity conditions are not required, they present new phenomena that seem well-suited to explain behaviors intrinsic to various physical mechanisms \cite{doi:10.1137/17M1110328,Bernardo2008,Leine2004} and even neurological mechanisms \cite{10.1063/5.0101778}.

The study of the dynamics of non-smooth systems begins with identifying the regions where the system behaves in a differentiable manner and describing the boundary zones between them. For $n-$dimensional systems, the boundary separations are often parallel hyperplanes, and the inherent problem of the dynamical behavior of the system shifts to understanding how points in a boundary separation transform, through the flow of the system, into points of an adjacent zone (or even the same zone). The map that transforms points from one boundary zone to another is typically called a transition map, and when the starting and ending zones are the same, it is usually referred to as a Poincaré half-map. The global behavior of the system is understood by knowing the conduct of these maps, their composition, and the behavior in the boundary separations. When the vector field of the system is continuous, or its flow does not present jumps in each boundary zone, then the behavior in them does not exhibit confusion. However, otherwise, it is common to resort to Filippov's convention \cite{Filippov88} to establish the dynamical behavior in the boundary zones, where sliding or escaping regions frequently appear.

The simplest piecewise linear system is the one constructed in two dimensions with two linearity regions separated by a straight line. When the continuity is imposed on the vector field that defines it, or when there is no sliding region on the separation line, the dynamical behavior is almost completely understood thanks to the novel integral characterization of the Poincaré half-maps recently discovered \cite{CARMONA2021319}. For instance, it is already known that the system has at most one limit cycle, that is, a unique isolated periodic orbit \cite{CarmonaEtAl19b,Caretalpre22}, and it is conjectured that it has at most one non-trivial isolated invariant curve. However, when the presence of the sliding region on the separating line is allowed, the dynamics are not understood, and until very recently, it was unknown whether the system could have an infinite number of limit cycles. It is now established that the maximum number of limit cycles is finite, and an explicit upper bound for this number is provided in \cite{Carmona2022}.

For the $n$-dimensional case, the study of a parametrized family of autonomous piecewise linear systems begins with their formulation in an appropriate canonical form \cite{Kahlert1990373,Kahlert1992222}. The commonly established canonical form is the well-known Liénard canonical form \cite{CarmonaEtAl02, FreireEtAl12}, which has proven to be highly useful for systems with two zones of linearity and symmetric systems with three zones of linearity. Systems that cannot be written in Liénard canonical form (commonly referred to in the literature as non-observable systems in control theory \cite{barnett1985introduction}) can be decoupled, reducing the dimensionality of the autonomous piecewise linear dynamics \cite{CarmonaEtAl02}. However, it is possible to find observable systems for which the dimensionality of the piecewise linear problem can be reduced. In such cases, the lower-dimensional system ceases to be autonomous, resulting in a non-autonomous piecewise linear system \cite{CarmonaEtAl02,Carmona200471,CFPT2}. It is commonly said that the original autonomous system is observable but not controllable. This paper is framed within this context. Specifically, it focuses on the analysis of the periodic behavior of a continuous symmetric observable and non-controllable three-dimensional symmetric piecewise linear system with three zones of linearity.

A continuous three-dimensional symmetric piecewise linear system with three zones of linearity can be written into the form 
\begin{equation}\label{ecu:sispwl3}
\dot{\mathbf{x}}=A\mathbf{x}+\mathbf{b}\mathop{\rm sat}
\left(\mathbf{e}_1^T \mathbf{x}\right),
\end{equation}
where $\mathbf{x}=(x_1,x_2,x_3)^T\in\mathbb{R}^3,$ $A$ is a $3\times 3$ real matrix, $\mathbf{b}\in\mathbb{R}^3$,
$\mathbf{e}_1=(1,0,0)^T$ and $\mathop{\rm sat}(\cdot)$ is the normalized saturation function 
\begin{equation}
\label{ecu:sat}
\mathop{\rm sat}(x)=\left\{\begin{array}{ccr} x & {\rm if} &
|x|<1,
\\ 
\noalign{\smallskip}
\mathop{\rm sgn}(x) & {\rm if} & |x|\geqslant 1,
\end{array}
\right.\
\end{equation}
being $\mathop{\rm sgn}(\cdot)$ the sign function and the dot the derivative with respect to the time.

From Proposition 16 of \cite{CarmonaEtAl02} and by means of the change of variable $x_2\to -x_2$ follows that system \eqref{ecu:sispwl3} can be transformed into the form 
\begin{equation}
\dot{\mathbf{x}}=B \mathbf{x}+ \mathbf{c}\mathop{\rm
sat}\left(\mathbf{e}_1^T
\mathbf{x}\right),\label{ecu'fcanoobsethree'}
\end{equation}
with $\mathbf{c}\in\mathbb{R}^3$ and \[B=\left(
\begin{array}{crr}
T & -1 & 0 \\ M & 0 & -1 \\ D & 0 & 0
\end{array}
\right),\]
provided that the observability matrix ${\mathcal{O}}=\left( 
\begin{array}{c|c|c}
{\mathbf{e}}_{1} & A^{T}{\mathbf{e}}_{1} & (A^{T})^{2}{\mathbf{e}}_{1}
\end{array}
\right) ^{T} 
$ 
has full rank. Here, $T,-M$ and $D$ are the coefficients of the characteristic polynomial of the similar matrices $A$ and $B$, that is, $p_A(\lambda)=p_B(\lambda)=\det(B-\lambda I)=-\lambda^3+T\lambda^2-M\lambda+D$. Moreover, by Proposition~13 of \cite{CarmonaEtAl02}, system \eqref{ecu'fcanoobsethree'} is not controllable if and only if the matrices $B$ and $B+\mathbf{c}\mathbf{e}_1^T$ share some eigenvalue. When the shared eigenvalue is real, the study of nonlinear oscillations for the system can be done, as stated in Proposition 20 of \cite{CarmonaEtAl02}, by means of the analysis of periodic orbits of an autonomous two-dimensional symmetric piecewise linear system. If the shared eigenvalue is $\alpha+\rm{i}\beta$, with $\beta>0$, then according to Proposition 21 of \cite{CarmonaEtAl02} system \eqref{ecu'fcanoobsethree'} can be transformed into the form 
\begin{equation}
\label{ecu:sisttriobsnocont}
    \left\{
    \begin{array}{l}
         \dot x_1=Tx_1+(\widetilde T-T)\mathop{\rm sat}(x_1)-x_2, \\
         \noalign{\smallskip}
         \dot x_2=2\alpha x_2+x_3, \\
         \noalign{\smallskip}
         \dot x_3=-\left(\alpha^2+\beta^2 \right)x_2,
    \end{array}
    \right.
\end{equation}
where $T$ and $\widetilde T$ are the traces of the matrices $B$ and $B+\mathbf{c}\mathbf{e}_1^T$, respectively. 

If system \eqref{ecu:sisttriobsnocont} has a non-trivial periodic orbit, then $\alpha=0$ and each cylinder of the family $\beta^2\left(\beta^2x_2^2+x_3^2\right)=r^2$, with $r\geqslant 0$, is an invariant surface for the system. Moreover, when $\alpha=0$, the change of variables and parameters 
\[
x=x_1, \, y=x_2/\beta, \, z=x_3/\beta^2, \, t=\beta s,\,  a=T/\beta,\,  b=\widetilde T/\beta
\]
allows to write system \eqref{ecu:sisttriobsnocont} as
\begin{equation}
\label{ecu:sisttriobsnocontb}
    \left\{
    \begin{array}{l}
         x'=ax+(b-a)\mathop{\rm sat}(x)-y, \\
         \noalign{\smallskip}
         y'=z, \\
         \noalign{\smallskip}
         z'=-y,
    \end{array}
    \right.
\end{equation}
and the invariant cylinders are transformed in $y^2+z^2=\mu^2$, with $\mu\in\mathbb{R}$. Here, the prime denotes the derivative with respect to the variable $t$.  

The solution of the two-dimensional system 
\[ \left\{
    \begin{array}{l}
        y'=z, \\
         \noalign{\smallskip}
         z'=-y,
    \end{array}
    \right.
\]
with initial condition $(y(0),z(0))=(0,-\mu)$ is  given by $(y(t),z(t))=(-\mu\sin t,-\mu\cos t)$ and so system \eqref{ecu:sisttriobsnocontb} can be reduced to the $2\pi-$periodic scalar piecewise linear differential equation 
\begin{equation}
\label{ecu:main}
     x'=ax+(b-a)\mathop{\rm sat}(x)+\mu\sin t,
\end{equation}
being $a,b,\mu\in\mathbb{R}$ and $\mathop{\rm sat}(\cdot)$ the normalized saturation function given in \eqref{ecu:sat}.

The main goal of this paper is to analyze the limit cycles (that is, isolated periodic solutions) of the differential equation \eqref{ecu:main}. In particular, we will focus on the number of its limit cycles and analyze some of their bifurcations, taking the radius of the invariant cylinders as the bifurcation parameter; that is, the parameter $|\mu|$. Equation \eqref{ecu:main} is a piecewise linear differential equation with three linearity zones, and its solutions may visit all three linearity zones, only two of them, or just one. If a solution resides in only one of the linearity zones, the dynamical behavior is linear, and an explicit expression for the solution can be given (see expressions \eqref{f1}-\eqref{f3}). In fact, as it is well known, the linear nature in each zone dictates that at most one limit cycle exists in each linearity zone. The study of periodic solutions becomes more complex when the solutions of the equation occupy more than one linearity zone. For example, as shown in \cite{Carmona200471}, the dynamics of solutions occupying two adjacent zones is non-trivial, and saddle-node bifurcations of periodic solutions may arise. The dynamical behavior of a piecewise linear equation with two zones of linearity is well established in \cite{Carmona200471} and this equation resembles a quadratic Riccati-type equation $x'=p(x)+\mu\sin t$, 
with $p$ a quadratic polynomial. It is worth noting that in both cases (for the piecewise equation with two zones and the Riccati-type equation), the maximum number of limit cycles is two (see \cite{Carmona200471,GasullZhao,Lloyd1979,Neto1980}). Note that equation \eqref{ecu:main} also resembles a cubic Abel-type equation. For this latter equation, an optimal bound on the number of limit cycles has been established when the leading coefficient has a definite sign; specifically, it has been proven that this equation can have at most three limit cycles (\cite{Pliss1966,Lloyd1973,Tineo2003}). However, as we will prove in this paper, the piecewise linear equation \eqref{ecu:main} can have up to five limit cycles, which highlights a substantial difference between them and gives a special character to piecewise linear equations.

Note that $2\pi-$periodic scalar differential equation \eqref{ecu:main} can be written in the form
\begin{equation}
\label{eq:main1}
     x'= h(t,x) :=f(x)+\mu\sin t,
\end{equation}
with
\begin{equation}\label{eq:f}
f(x) := ax +\frac{a-b}{2}\big(|x-1|-|x+1|\big) =
\begin{cases}
a\, x+(a-b),\quad \text{if }x\leq -1,\\
b\, x,\quad \text{if }-1<x < 1,\\
a\, x+(b-a),\quad \text{if } x \geq 1.
\end{cases}	
\end{equation}
We denote by $u(t,\tau,x)$ the maximal solution of the differential equation \eqref{eq:main1} determined by the initial condition $u(\tau,\tau,x)=x$. Observe that a non-constant solution $u$ is periodic if and only if $u(t,\tau,x)=u(t+2\pi,\tau,x)$ for all $t\in\mathbb{R}$.
Furthermore, suppose $u(t,\tau,x)$ is periodic. In that case, it is said to be singular or multiple if $u_x(\tau+2\pi,\tau,x)=1$, where the subscript denotes partial derivative; otherwise, it is said to be simple or hyperbolic. Isolated periodic solutions are also called limit cycles. A continuum of periodic solutions is called a center of equation \eqref{eq:main1}.

The differential equation \eqref{eq:main1}  has been partially studied in \cite{CFPT2}. In this work, the authors analyze the periodic solutions and conjecture that the maximum number of limit cycles of \eqref{eq:main1} is five and that this upper bound is attained. Moreover, they also focus on symmetric periodic solutions, i.e. solutions $u$ such that $u(t+\pi,\tau,x)=-u(t,\tau,x)$.

The main result of this paper is to prove the existence of at least five limit cycles of \eqref{eq:main1} for certain values of the parameters. This would provide partial validation of the conjecture presented in \cite{CFPT2}. To do this, we study the first-order perturbation 
\begin{equation}\label{eq:ef}
x' = \epsilon f(x) +\mu \sin(t),
\end{equation}
and state and prove Theorem \ref{theo:Melnikov}; namely, for $a b<0$ fixed, there exist two bifurcation values, $\mu_1$ and $\mu_2$, with $|\mu_2|<|\mu_1|$ such that for $\mu\neq \mu_1,\mu_2$ and $\epsilon$ small enough, the number of limit cycles of \eqref{eq:ef} is 
\begin{enumerate}
\item three, if $|\mu|<|\mu_2|$,
\item five, if $|\mu_2|<|\mu|<|\mu_1|$,
\item one, if $|\mu_1|<|\mu|$.
\end{enumerate} 

The paper is organized as follows: first, in Section \ref{sect2}, some general results on the equation \eqref{eq:main1} are presented, where the symmetry of the equation and its family of solutions are emphasized. This symmetry will be fundamental throughout the paper; as a first proof, it is shown that \eqref{eq:main1} always has exactly one periodic symmetric solution (Proposition \ref{prop:sym}), as already noted in \cite[Theorem 1]{CFPT2}.

Section~\ref{Sect3} is devoted to the study of equation \eqref{eq:main1} when $ab=0$ or $ab>0$. The first case is the only one in which centers (continuum of periodic solutions) can appear, and we detail the possible phase portraits. When $ab>0$ we show that the only periodic solution is the symmetric one, while if $a b < 0$, we prove, in Section \ref{Sect:Finiteness}, that it is possible to give a uniform upper bound (independently of $a,b$ and $\mu$) for the number of limit cycles in equation \eqref{eq:main1} (Theorem \ref{Th:finiteness}).

In Section \ref{Sect:Compacity}, we prove that if $\vert \mu \vert \ll 1$, there are exactly three limit cycles (Theorem \ref{Th:-mu_small}), and if $\vert \mu \vert$ is large enough there is only one (Theorem \ref{Th:-mu_large}).

Computational evidence leads to conjecture that equation \eqref{eq:main1} has at most five limit cycles (see also \cite{CFPT2}). In Section \ref{Sect:Melnikov} we prove that the conjecture is locally true by analytically describing the bifurcation diagram in terms of $\mu$ and $x$ (sketched in Figure \ref{fig:zeroset}) for the first-order perturbation of equation \eqref{eq:main1} given in \eqref{eq:ef}. This leads directly to the proof of our main result.

Obviously, the results on equation \eqref{eq:main1} can be immediately applied to the three-dimensional differential system \eqref{ecu:sisttriobsnocontb}. For instance, from the results given in Section \ref{Sect3}, it can be deduced that for $ab>0$ system \eqref{ecu:sisttriobsnocontb} has only one equilibrium point and a unique periodic orbit living in each invariant cylinder $y^2+z^2=\mu^2$, with $\mu \ne 0$ (see Figure \ref{fig:optridipos}). Indeed, this periodic orbit is, for each cylinder, attractive when $a+b>0$ and repulsive when $a+b<0$. When the parameters $a$ and $b$ are fixed with $ab<0$, the information provided in Section \ref{Sect:Compacity} and Theorem \ref{theo:Melnikov} allows us to affirm that system \eqref{ecu:sisttriobsnocontb} has exactly three equilibrium points, it possesses three periodic orbits living in each invariant cylinder $y^2+z^2=\mu^2$ when the radius $|\mu|\ne 0$ is sufficiently small and it possesses a unique periodic orbit living in each cylinder when the radius $|\mu|$ is sufficiently large. Moreover, when the traces $a$ and $b$ are sufficiently small satisfying $ab<0$, then there exist two values $0<\mu_1<\mu_2$ such that if the radius of the cylinders belongs to the interval $(\mu_1,\mu_2)$, then the system possesses five periodic orbits living in each one of those cylinders. Figure \ref{fig:optridi} shows the structure of periodic orbits of the three-dimensional system \eqref{ecu:sisttriobsnocontb} when $ab<0$ and they are sufficiently small. The one-dimensional invariant manifold is the axis of the invariant cylinders (corresponding to $\mu=0$). The pair of heteroclinic connections contained in the one-dimensional invariant manifold and linking the three equilibrium points is also depicted.

\begin{figure}[h]
\begin{center}
\includegraphics[height=65mm]{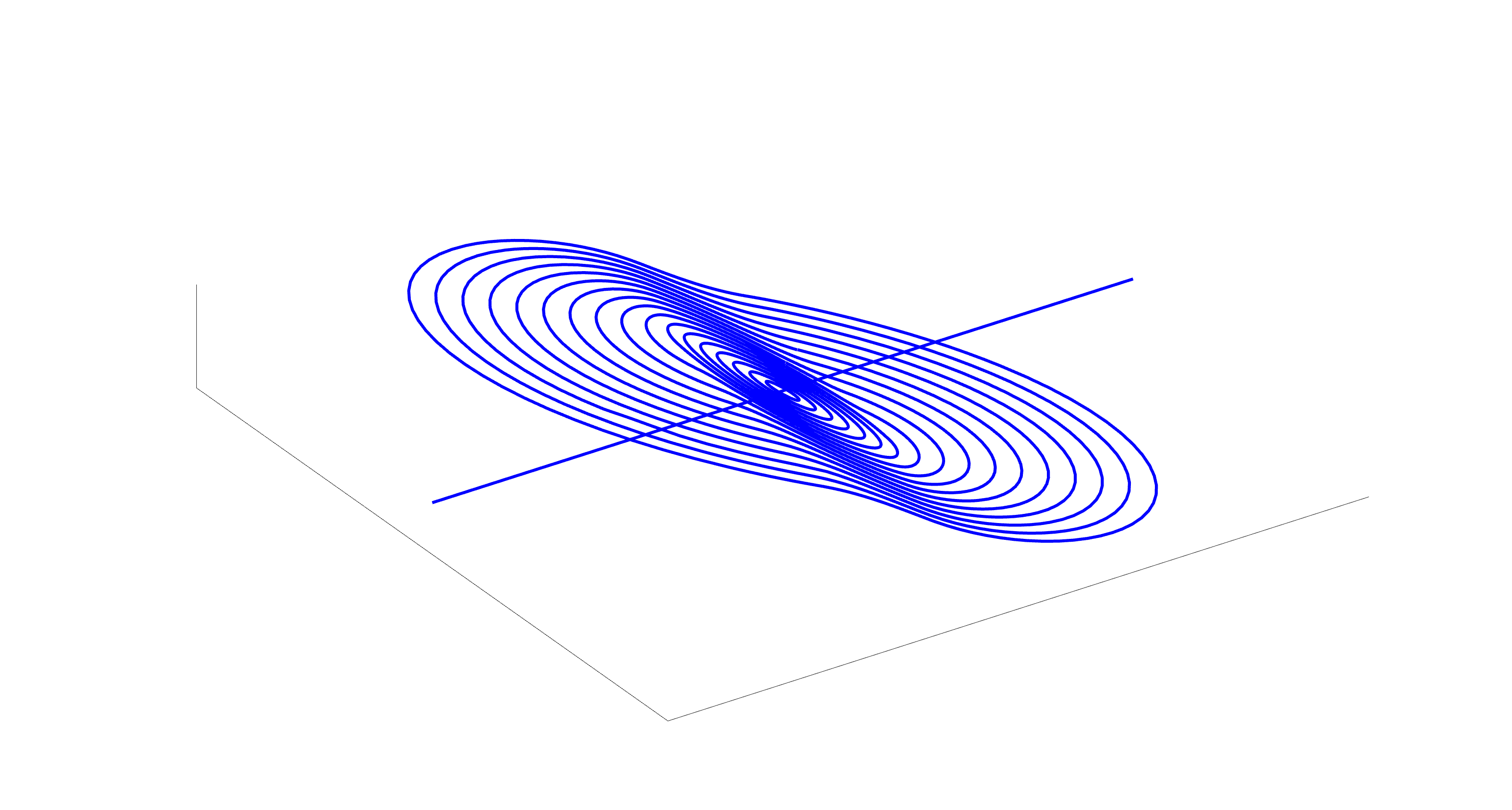}
\end{center}
\caption{Structure of periodic orbits of the three-dimensional system \eqref{ecu:sisttriobsnocontb} when $ab>0$ and sufficiently small. The one-dimensional invariant manifold is the axis of the invariant cylinders (corresponding to $\mu=0$).}\label{fig:optridipos}
\end{figure}

\begin{figure}[h]
\begin{center}
\includegraphics[height=85mm]{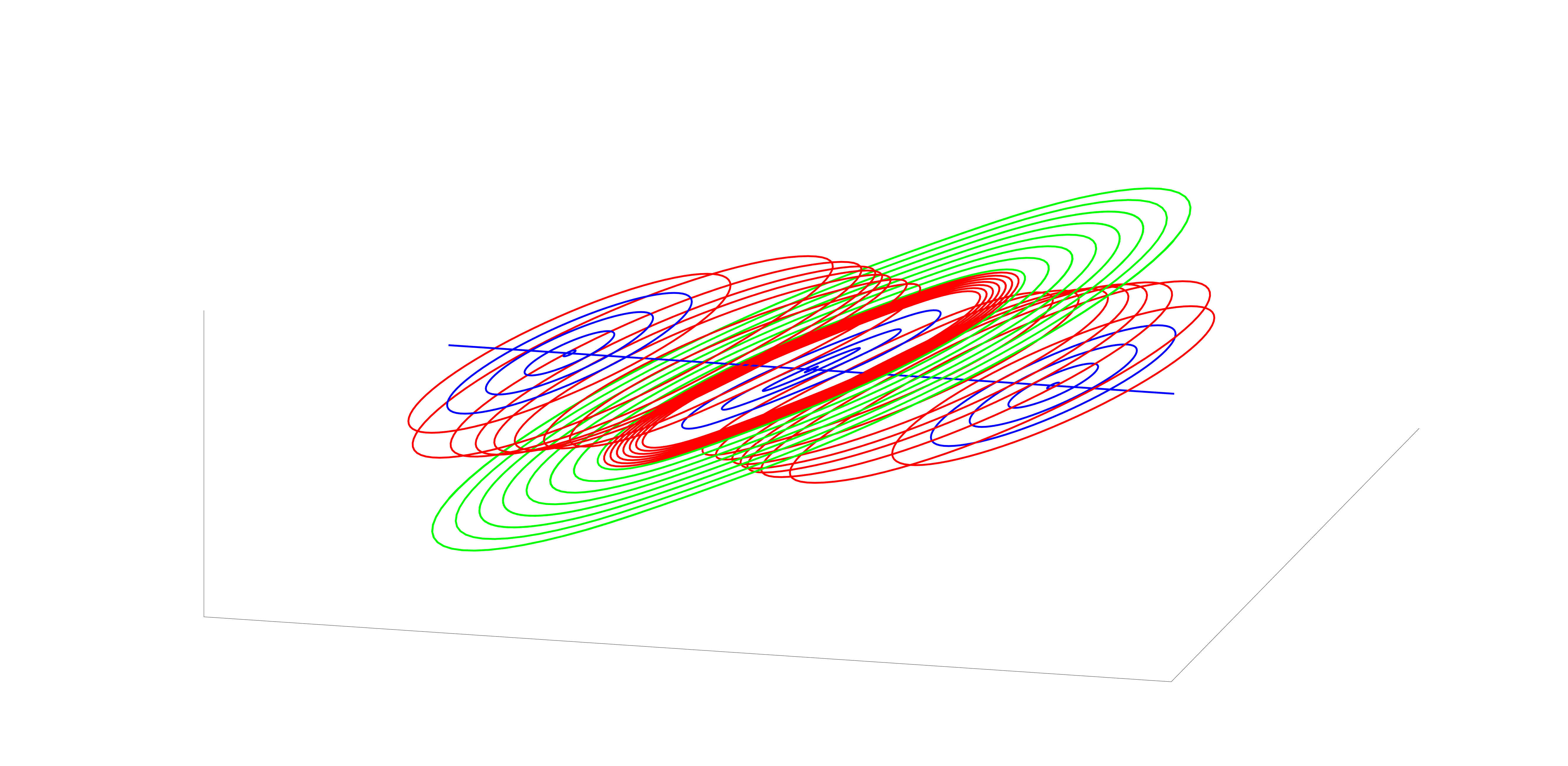}
\end{center}
\caption{Structure of periodic orbits of the three-dimensional system \eqref{ecu:sisttriobsnocontb} when $ab<0$ and sufficiently small. The one-dimensional invariant manifold is the axis of the invariant cylinders (corresponding to $\mu=0$). The colors encode the number of periodic orbits in each cylinder: blue represents three, red represents five, and green represents one.}\label{fig:optridi}
\end{figure}

\section{Preliminary results}\label{sect2}

In this section, we show some general results that will be useful later. First, note that the function $h(t,x)$ defined in \eqref{eq:main1} is $2 \pi-$periodic with respect to $t$. Since the  periodic solutions of equation \eqref{eq:main1} are always $2 \pi$-periodic solutions, periodic will mean $2 \pi-$periodic in the following. Taking into account that the function $h(t,x)$ is globally Lipschitz continuous with respect to $x$, the solution $u(t,\tau,x)$ is defined for every $t\in\mathbb{R}$. 

Note that, for $p,q,\mu\in\mathbb{R}$, the solution $v(t,\tau,x)$ of the linear differential equation
\begin{equation}\label{eq:linearode}
    x'=px+q+\mu\sin t,
\end{equation}
with initial condition $v(\tau,\tau,x)=x$ is given by 
\begin{equation}\label{eq:sollinear}
v(t,\tau,x)= e^{p (t-\tau)}x+\int_{\tau}^t (q+\mu \sin( s)) e^{p(t- s)}\,ds.
\end{equation}
Thus, since differential equation \eqref{eq:main1} is piecewise linear, if $u(s,\tau,x)\leq -1$ for every $s\in[\tau,t]$, then
\begin{equation}\label{f1}
u(t,\tau,x)= e^{a (t-\tau)}x+\int_{\tau}^t (a-b+\mu \sin( s)) e^{a(t- s)}\,ds,
\end{equation}
if $-1\leq u(s,\tau,x)\leq 1$ for every $s\in[\tau,t]$, then
\begin{equation}\label{f2}
u(t,\tau,x)=e^{b( t-\tau)}x+\int_{\tau}^t \mu \sin(s) e^{b(t- s)}\, ds
\end{equation}
and if $u(s,\tau,x)\geq 1$ for every $s\in[\tau,t]$, then
\begin{equation}\label{f3}
u(t,\tau,x)= e^{a( t-\tau)}x+ \int_{\tau}^t (b-a+\mu \sin( s))e^{a(t- s)}\,ds.
\end{equation}

Define the Poincaré map for equation \eqref{eq:main1} as 
\begin{equation}\label{eq:Poincaremap}
P(x):=u(2 \pi,0,x)
\end{equation}
and the displacement function of \eqref{eq:main1} as
\begin{equation}\label{eq:disp_app}
d(x) := P(x)-x.
\end{equation}
Notice that the periodic solutions of equation \eqref{eq:main1} correspond to the fixed points of the Poincaré map $P$; that is, the zeroes of the displacement function $d$. Moreover, the limit cycles of the differential equation \eqref{eq:main1} correspond to the isolated zeroes of $d$.

From the odd symmetry of function $f$ given in \eqref{eq:f} it follows that $-u(t+\pi,\tau,x)$ is a solution of \eqref{eq:main1}. Thus, 
\[-u(t+\pi,0,x)=u(t,0,-u(\pi,0,x)),\]
and if we denote 
\begin{equation}\label{eq:semipoinca}
Q(x):=-u(\pi,0,x),
\end{equation}
then
\[
Q^2(x)=Q(Q(x))=Q(-u(\pi,0,x))=-u(\pi,0,-u(\pi,0,x))=u(2\pi,0,x),
\]
that is, $Q^2(x)=P(x)$. The map $Q$ is called a Poincaré half-map for equation \eqref{eq:main1}.

\begin{defi}
We say that the solution $u(t,0,x)$ is a symmetric periodic solution of \eqref{eq:main1} if $x$ is a fixed point of the map $Q$.
\end{defi}

The following result ensures that the Poincaré map $P$, the displacement function $d$, and the Poincaré half-map $Q$ are functions of class $\mathcal{C}^1$. Its proof can be adapted directly from the proof of Proposition 4.6 of \cite{Carmona200471} (alternatively, Theorem~A.1 of~ \cite{BravoFernandezTineo07}) given for piecewise linear differential equations with two zones of linearity. 

\begin{prop}\label{prop:der}
The Poincaré map $P$ defined in \eqref{eq:Poincaremap}, the  displacement function given in \eqref{eq:disp_app}, and the Poincaré half-map $Q$ given in \eqref{eq:semipoinca} are continuously differentiable, and their respective derivatives can be written as 
\[
P'(x) = e^{2\pi\,a + (b-a)\, m(A_{in}(x))},
\]
\[
d'(x) = e^{2 \pi a + (b-a) m(A_{in}(x))}-1 
\]
and 
\[
Q'(x) = -e^{\pi\,a + (b-a)\, m(A_{in}(x)\cap [0,\pi])},
\]
where $A_{in}(x):=\{t\in[0,2\pi] \colon |u(t,0,x)|\leq 1 \}$ and $m(\cdot)$ denotes, as usual, the measure of a measurable set in $\mathbb{R}$.
\end{prop}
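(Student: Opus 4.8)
The plan is to reduce the whole statement to the single assertion that, for every fixed $t\in\mathbb{R}$, the map $x\mapsto u(t,0,x)$ is of class $\mathcal{C}^1$ with
\[ u_x(t,0,x)=\exp\Big(\int_0^t \tilde f\big(u(s,0,x)\big)\,ds\Big), \]
where $\tilde f(\xi):=b$ for $|\xi|<1$ and $\tilde f(\xi):=a$ for $|\xi|>1$ (the a.e.-defined ``derivative'' of $f$). Granting this, all three formulas follow by a one-line computation: with $A_{in}(x)=\{s\in[0,2\pi]\colon|u(s,0,x)|\le1\}$ one has $\int_0^{2\pi}\tilde f(u(s,0,x))\,ds=b\,m(A_{in}(x))+a\big(2\pi-m(A_{in}(x))\big)=2\pi a+(b-a)\,m(A_{in}(x))$, which yields $P'(x)$ since $P(x)=u(2\pi,0,x)$; then $d'(x)=P'(x)-1$; and doing the same integral over $[0,\pi]$ gives $Q'(x)=-u_x(\pi,0,x)=-\exp\big(\pi a+(b-a)\,m(A_{in}(x)\cap[0,\pi])\big)$, because $Q(x)=-u(\pi,0,x)$.

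To prove the reduced assertion I would first handle the ``generic'' initial conditions, i.e.\ those $x$ for which $u(\cdot,0,x)$ meets each of the lines $\{\xi=\pm1\}$ only transversally (so $h(t_0,u(t_0,0,x))\ne0$ at every instant $t_0$ with $|u(t_0,0,x)|=1$). For such $x$ the contact instants $0<t_1(x)<\cdots<t_k(x)<2\pi$ are finite in number and, by the implicit function theorem applied to the defining equation $u(t_j(x),0,x)=\pm1$, each $t_j$ is a $\mathcal{C}^1$ function of the initial value in a neighbourhood of $x$. On every subinterval between consecutive contacts the solution is one of the explicit affine-in-the-initial-value expressions \eqref{f1}--\eqref{f3}; composing them and applying the chain rule shows $u(t,0,\cdot)\in\mathcal{C}^1$ near $x$, and the terms carrying the factors $t_j'(x)$ cancel telescopically precisely because the value of the solution at each contact is pinned to $\pm1$ independently of $x$ (so only the variation through the flows survives). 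What remains is the product of the factors $e^{(\text{slope of the }j\text{-th piece})\cdot(\text{length of the }j\text{-th piece})}$, i.e.\ $\exp(\int_0^t\tilde f(u(s,0,x))\,ds)$. This is the three-slab version of the computation in \cite[Prop.~4.6]{Carmona200471}.

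Next I would show that the set $S$ of non-generic initial conditions is finite and that the candidate derivative extends continuously across it. A non-transversal contact $u(t_0,0,x)=1$, $h(t_0,1)=0$ forces $b+\mu\sin t_0=0$, hence $\sin t_0=-b/\mu$ (and symmetrically for the value $-1$); there are thus only finitely many admissible tangency instants $t_0\in[0,2\pi]$, and since $x\mapsto u(t_0,0,x)$ is strictly increasing (uniqueness of solutions), each $t_0$ corresponds to at most one $x$, so $S$ is finite (the case $\mu=0$, where the equation is autonomous, is checked directly). Moreover $x\mapsto m(A_{in}(x))$ is continuous on all of $\mathbb{R}$: this follows from the uniform-on-$[0,2\pi]$ continuous dependence $u(\cdot,0,x)\to u(\cdot,0,x_0)$ together with the fact that $\{s\colon|u(s,0,x_0)|=1\}$ is null (a solution identically equal to $\pm1$ on an interval would force $b=\mu=0$, a case checked directly). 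Hence $x\mapsto e^{2\pi a+(b-a)m(A_{in}(x))}$ is continuous on $\mathbb{R}$, and likewise the formula for $Q'$. Finally, at $x_0\in S$: $P$ is continuous on $\mathbb{R}$ and, by the previous paragraph, $\mathcal{C}^1$ on a punctured neighbourhood of $x_0$; the mean value theorem gives $\big(P(x)-P(x_0)\big)/(x-x_0)=P'(\xi_x)$ with $\xi_x\to x_0$, and by the continuity just established this converges to $e^{2\pi a+(b-a)m(A_{in}(x_0))}$. Thus $P$ is differentiable at $x_0$ with the stated derivative, so $P\in\mathcal{C}^1(\mathbb{R})$; the same argument applies to $Q$ (with $\pi$ replacing $2\pi$) and to $d=P-\mathrm{id}$.

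The step I expect to be the main obstacle is the treatment of the non-generic points: one must argue convincingly that a solution can be tangent to $\{\xi=\pm1\}$ for only finitely many initial conditions, and that the length of a short excursion of $u(\cdot,0,x)$ out of $[-1,1]$ near such a tangency shrinks to $0$ as $x$ approaches the tangential value, so that $m(A_{in}(\cdot))$ remains continuous there (this can be made quantitative, of order $|x-x_0|^{1/2}$ for a quadratic contact, but only the qualitative statement is needed). By comparison, the transversal case is routine chain-rule bookkeeping once the telescoping cancellation of the contact-time derivatives is noticed, exactly as in the two-zone setting.
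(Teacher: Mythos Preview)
Your proposal is correct and follows the same route as the paper: reduce to the variational identity $u_x(t,0,x)=\exp\big(\int_0^t f'(u(s,0,x))\,ds\big)$ and then evaluate the exponent as $2\pi a+(b-a)m(A_{in}(x))$ (and the analogous expression over $[0,\pi]$ for $Q$). The only difference is one of detail: the paper's proof simply cites \cite[Prop.~4.6]{Carmona200471} and \cite[Thm.~A.1]{BravoFernandezTineo07} for the $\mathcal{C}^1$-regularity and the variational formula, whereas you sketch the underlying argument (transversal contacts via explicit compositions, finiteness of tangential initial conditions, continuity of $m(A_{in}(\cdot))$, and a mean-value-theorem extension across the exceptional points). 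Your added detail is sound and matches what those references do in the two-zone setting.
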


\begin{proof}
For the differentiability, we refer the reader to Proposition 4.6 of \cite{Carmona200471} or Theorem~A.1 of~ \cite{BravoFernandezTineo07}. Then,
we have that $u(t,0,x)$ is continuously differentiable with respect to $x$ and that
\begin{equation*}
u_x(t,0,x)= \exp\left(\int_0^t f'(u(s,0,x))\,ds \right),
\end{equation*}
where the subscript means partial derivative. Now, it suffices to observe that $P(x)=u(2\pi,0,x)$, that $Q(x)=-u(\pi,0,x)$ and that $f'(u(s,0,x))=a$, if $|u(s,0,x)|\geq1$ and $f'(u(s,0,x))=b$, if $|u(s,0,x)|\leq1$.
\end{proof}

Observe that if $P(\bar x)=\bar x$ and $P'(\bar x)\ne 1$, then the periodic solution $u(t,0,\bar x)$ is a hyperbolic limit cycle and this limit cycle is attractive (resp. repulsive) if $P'(\bar x)<1$ (resp. $P'(\bar x)>1$). In addition, the periodic solution is singular or non-hyperbolic (that is, $P'(\bar x)=1$)  if and only if $m(A_{in}(\bar x)) = 2\pi\, a/(a-b)$.

The following result is \cite[Teorema 1]{CFPT2}. We include the proof for the sake of completeness.
\begin{prop}[\cite{CFPT2}]\label{prop:sym}
Equation \eqref{eq:main1} has exactly one symmetric periodic solution, which we will denote by $u^s(t)$.
\end{prop}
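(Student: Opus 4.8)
The plan is to show that the Poincar\'e half-map $Q$ has exactly one fixed point by proving it is a contraction-like map in a suitable sense, or more precisely, that $Q$ is strictly decreasing with a unique fixed point. First I would observe, via Proposition~\ref{prop:der}, that $Q'(x) = -e^{\pi a + (b-a)m(A_{in}(x)\cap[0,\pi])} < 0$ for all $x$, so $Q$ is a strictly decreasing $\mathcal{C}^1$ function on $\mathbb{R}$. A strictly decreasing continuous function on $\mathbb{R}$ has at most one fixed point, and it has at least one provided $Q$ maps $\mathbb{R}$ into a bounded region (or, more weakly, provided $Q(x) - x$ changes sign). So the existence half of the argument reduces to controlling the behavior of $Q$ at $\pm\infty$.

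For existence, I would argue that solutions of \eqref{eq:main1} are eventually trapped in a bounded region in backward or forward time, hence $Q$ cannot escape to infinity. Concretely, for $|x|$ large the solution $u(t,0,x)$ stays in the outer zone $x\ge 1$ (or $x\le -1$) on all of $[0,\pi]$, so $Q(x) = -u(\pi,0,x)$ is given explicitly by the linear formula \eqref{f3} (resp. \eqref{f1}); from that closed form one reads off that $Q(x) + x \to -\infty \cdot \operatorname{sgn}(\text{something})$ or that $Q(x)$ is affine in $x$ with slope $-e^{\pi a}$, and then $d(x) = Q^2(x) - x$... actually it is cleaner to work directly with the sign of $g(x):=Q(x)-x$: for $x\to+\infty$ we get $g(x) = -e^{\pi a}x + O(1) - x \to -\infty$, and for $x\to -\infty$, $g(x)\to +\infty$. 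By the intermediate value theorem $g$ has a zero, and by strict monotonicity of $Q$ (so $g' = Q' - 1 < -1 < 0$) that zero is unique. A fixed point $\bar x$ of $Q$ is automatically a fixed point of $P = Q^2$, hence gives a periodic solution, and it is symmetric by definition; conversely every symmetric periodic solution corresponds to a fixed point of $Q$, so uniqueness of the fixed point of $Q$ gives uniqueness of the symmetric periodic solution.

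The main obstacle, and the point requiring care, is the asymptotic analysis of $Q$ at $\pm\infty$: one must verify that for $|x|$ sufficiently large the orbit segment $u(t,0,x)$, $t\in[0,\pi]$, really does remain in a single outer linearity zone so that the explicit formula applies, and then check the sign of the leading term of $g(x)=Q(x)-x$. When $a<0$ the slope $-e^{\pi a}\in(-1,0)$, so $g(x)\sim -(1+e^{\pi a})x$ has the right sign at both ends; when $a>0$ the slope term $-e^{\pi a}x$ already dominates with the correct sign; when $a=0$ one has $g(x) = -x + O(1)$, again fine. So in every case the leading behaviour of $g$ forces a sign change. I would also remark that the argument does not use the precise value of $\mu$ anywhere except through bounded lower-order terms, which is why the conclusion is uniform in the parameters. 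Finally, I would note this recovers \cite[Teorema 1]{CFPT2}.
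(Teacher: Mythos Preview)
Your proposal is correct and follows essentially the same route as the paper: use Proposition~\ref{prop:der} to see that $Q'(x)<0$ everywhere (uniqueness), and use the explicit outer-zone formulas \eqref{f1}, \eqref{f3} to get $Q(x)-x=-(e^{\pi a}+1)x+O(1)$ for $|x|$ large, so $Q(x)-x$ changes sign and the intermediate value theorem gives existence. Your case split on the sign of $a$ is unnecessary since $e^{\pi a}+1>0$ in every case, and the early remark about $Q$ mapping into a bounded region is a false start you correctly abandon; otherwise the argument is exactly the paper's.
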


\begin{proof} When $|x|$ is large enough, by means of expressions \eqref{f1} and  \eqref{f3}, one obtains that
\begin{equation*}
Q(x)-x= (-e^{ \pi\, a}-1)x+ \ldots,
\end{equation*}
where the dots denote terms not depending on $x$. Hence, $\lim_{x\to\infty}(Q(x)-x)=-\lim_{x\to -\infty}(Q(x)-x) \ne 0$ and since $Q$ is continuous,  it is deduced that $Q$ has at least one fixed point $\bar x^s$. Now, from Proposition \ref{prop:der} it follows that $Q'(x)<0$ for all $x\in \mathbb{R}$ and so we conclude the uniqueness of the symmetric periodic orbit $u^s(t):=u\left(t,0,\bar x^s\right)$.
\end{proof}

As a direct consequence of Proposition \ref{prop:sym}, we have that equation \eqref{eq:main1} possesses at least one periodic solution, the symmetric one.

\begin{rema}\label{rema:signo}
The change of variable $t \to -t$ transforms the equation $x' = f(x) + \mu \sin(t)$ into $x' = f(-x) + \mu \sin(t)=-f(x) + \mu \sin(t)$, so we can choose the sign of $a$ or $b$. Moreover, the change of variable $t\to t+\pi$ transforms $x'=f(x)+\mu \sin(t)$ into $x'=f(x)-\mu \sin(t)$, so we can also choose the sign of $\mu$.
\end{rema}

\section{Uniqueness of limit cycles for the case $ab\geq0$}\label{Sect3}

Note that \eqref{eq:main1} has a center if the displacement function \eqref{eq:disp_app} is zero in an entire interval. If the displacement function of \eqref{eq:main1} is identically zero, the center is said to be global.

\begin{theo} \label{teor:unicilc}
Assume that $ab\geq 0$. Then the dynamics of equation~\eqref{eq:main1} is one of the following:
\begin{enumerate}
\item Equation \eqref{eq:main1} has a global center if and only if $a=b=0$.
\item If $b = 0$ and $|\mu| < 1$, equation \eqref{eq:main1} has a center and no limit cycles.
\item In any other case, equation \eqref{eq:main1} has exactly one limit cycle, which is the symmetric periodic solution $u^s(t)$ given in Proposition \ref{prop:sym}. Moreover, the limit cycle is hyperbolic, globally asymptotically stable when $b<0$ and unstable when $b>0$.
\end{enumerate}
\end{theo}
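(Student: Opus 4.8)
The plan is to exploit the key formula from Proposition~\ref{prop:der}, namely $d'(x) = e^{2\pi a + (b-a)m(A_{in}(x))} - 1$, which shows that the sign of $d'$ is governed entirely by the scalar quantity $2\pi a + (b-a)m(A_{in}(x))$, where $m(A_{in}(x)) \in [0,2\pi]$. First I would dispose of the degenerate cases. If $a=b=0$, equation \eqref{eq:main1} reads $x' = \mu\sin t$, which integrates explicitly to $u(t,0,x) = x - \mu(\cos t - 1)$, so $P = \mathrm{id}$ and every solution is periodic: a global center, and conversely if there is a global center then $d\equiv 0$, forcing $e^{2\pi a + (b-a)m(A_{in}(x))}\equiv 1$, hence (letting $|x|\to\infty$ so that $m(A_{in}(x))\to 0$) $a = 0$, and then $(b-a)m(A_{in}(x)) = bm(A_{in}(x)) \equiv 0$; since for small $|x|$ the solution stays in $|x|<1$ on a set of positive measure, $b=0$. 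This gives (1). For (2), when $b=0$ and $|\mu|<1$: the inner region equation is $x' = \mu\sin t$, whose periodic solutions fill a band $|x|<1-|\mu|$ (one checks that $|u(t,0,x)|<1$ for all $t$ when $|x| + |\mu| < 1$, staying in the linear zone $b=0$), so every such solution is periodic — a center — while outside, $a\neq 0$ (else case (1)) so $d'(x)\neq 0$ there and no further periodic orbits of positive-measure-two-sided type arise; I would argue the symmetric solution $u^s$ lies inside this band and the band is the full periodic set.

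The main case is (3): $ab\geq 0$, not covered by (1) or (2). By Remark~\ref{rema:signo} we may assume $b\geq 0$, and since we are not in case (1)–(2) we have either $b>0$, or $b=0$ with $a\neq 0$ and $|\mu|\geq 1$; combined with $ab\geq 0$, in the subcase $b>0$ we have $a\geq 0$. The crucial observation is then that $b-a$ and... more carefully: when $b\geq 0$ and $a\geq 0$, if $a=b$ the equation is globally linear with $f'\equiv a$, $d'(x) = e^{2\pi a} - 1$, which is nonzero unless $a=0$, handled already; if $0\le a\neq b$, then $2\pi a + (b-a)m(A_{in}(x))$ is a strictly monotone (in $m$) affine function ranging over the interval with endpoints $2\pi a$ (at $m=0$) and $2\pi b$ (at $m=2\pi$), and since $a,b\geq 0$ are not both zero, this interval does not contain $0$ in its interior in the sense that $2\pi a + (b-a)m(A_{in}(x)) = 0$ would force $m(A_{in}(x)) = 2\pi a/(a-b)$, which is either negative or $\geq 2\pi$ when $a,b\geq 0$, $a\neq b$ — hence unattainable. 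Therefore $d'(x)$ has constant sign for all $x$, so $d$ is strictly monotone, so it has at most one zero; by Proposition~\ref{prop:sym} it has at least one (the symmetric solution), hence exactly one, and it is hyperbolic with $P'(\bar x^s) = e^{2\pi a + (b-a)m(A_{in})}$ having the sign of $b$ relative to $1$ — strictly less than $1$ when the exponent is negative. One still must check the subcase $b=0$, $a\neq 0$, $|\mu|\geq 1$: here the exponent is $2\pi a - a\,m(A_{in}(x)) = a(2\pi - m(A_{in}(x)))$, which vanishes iff $m(A_{in}(x)) = 2\pi$, i.e. the solution never leaves $|x|<1$; but with $|\mu|\geq 1$ and $b=0$ the inner dynamics $x'=\mu\sin t$ has amplitude of oscillation $\geq 2$ except on a measure-zero set of initial data, so no periodic solution stays inside for a full period with $m = 2\pi$ — I would verify that the unique fixed point of $P$ has $m(A_{in}) < 2\pi$, making it hyperbolic.

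For the stability claim in (3): since $d$ is strictly monotone with a single zero $\bar x^s$, and $d(x)\to +\infty$ and $d(x)\to -\infty$ at one of $\pm\infty$ (read off from $P(x) = e^{2\pi a}x + (\text{const})$ for $|x|$ large when $a>0$, or handle $a<0$ symmetrically via Remark~\ref{rema:signo}), the monotonicity of $d$ pins down $d' < 0$ throughout precisely when the limit cycle is attracting, i.e. $P'(\bar x^s) < 1$, i.e. the exponent $2\pi a + (b-a)m(A_{in})$ is negative; I claim this is equivalent to $b<0$ after undoing the sign normalization — more transparently, working with the original (unnormalized) sign of $b$: if $b<0$ then $a\leq 0$ too (as $ab\ge0$), the exponent is a convex combination of $2\pi a\le 0$ and $2\pi b<0$, hence strictly negative, so $P'(\bar x^s)<1$ and the cycle is globally asymptotically stable; symmetrically $b>0$ gives the exponent strictly positive and the cycle unstable. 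The main obstacle I anticipate is not the monotonicity argument — which is essentially immediate from Proposition~\ref{prop:der} — but rather the careful bookkeeping in the $b=0$ boundary subcases (distinguishing $|\mu|<1$ from $|\mu|\geq 1$) and making rigorous the claim that in case (2) the periodic band is \emph{exactly} $\{|x|<1-|\mu|\}$ with nothing periodic outside it; this requires showing that a solution leaving $|x|<1$ cannot return to close up periodically, which follows from $d'\neq 0$ off the band but needs the observation that $A_{in}(x) = [0,2\pi]$ fails once the solution exits.
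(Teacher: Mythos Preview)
Your approach is essentially identical to the paper's: split into the degenerate cases and the main case, and use the formula $d'(x)=e^{2\pi a+(b-a)m(A_{in}(x))}-1$ from Proposition~\ref{prop:der} to control the sign of $d'$. Two bookkeeping points need fixing.

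First, in case (2) the band of periodic solutions is not $|x|<1-|\mu|$: since $u(t,0,x)=x+\mu(1-\cos t)$ ranges over $[x,x+2\mu]$ when $\mu>0$, the band where the solution stays in $[-1,1]$ is the asymmetric interval $-1\le x\le 1-2\mu$. This does not affect the argument (you only need a nonempty open interval), but the stated description is wrong.

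Second, and more substantively, your monotonicity argument in case (3) asserts that $m(A_{in}(x))=2\pi a/(a-b)$ is ``either negative or $\ge 2\pi$, hence unattainable'' when $a,b\ge 0$, $a\ne b$. For $a=0$ this value is $m=0$, which \emph{is} attained whenever the solution stays entirely in an outer zone (and this happens for all large $|x|$). So $d'(x)=0$ on an unbounded set and $d$ is not strictly monotone there; your argument breaks. The paper (Proposition~\ref{prop:a0}) closes this gap by computing $d$ directly on that set from \eqref{f1} or \eqref{f3}: for $a=0$ one gets $d(x)=-2\pi b\,\sgn(x)\ne 0$, so the unique zero of $d$ must lie in the region where $d'$ has strict sign, and hyperbolicity follows. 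You handled the mirror boundary case $b=0$, $m=2\pi$ but omitted this one.
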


The proof of this result is organized as follows: the characterization that \eqref{eq:main1} is a global center, that is item (1), is Proposition~\ref{prop:globalcenter}, while items (2) and (3) are obtained as a consequence of Propositions~\ref{prop:b0} and \ref{prop:a0}, which correspond to the cases $b=0$ and $b \neq  0$, respectively.

\begin{prop}\label{prop:globalcenter}
Equation \eqref{eq:main1} is a global center if and only if $a=b=0$.
\end{prop}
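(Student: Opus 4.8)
The plan is to prove both implications directly from the formula for $P'$ in Proposition~\ref{prop:der}. For the ``if'' direction, when $a=b=0$ the function $f$ in \eqref{eq:f} vanishes identically, so \eqref{eq:main1} becomes $x'=\mu\sin t$; the solution with $u(0,0,x)=x$ is $u(t,0,x)=x+\mu(1-\cos t)$, and evaluating at $t=2\pi$ gives $P(x)=x$ for every $x$, hence $d\equiv 0$ and the equation is a global center. So the real content is the ``only if'' direction, which I would obtain as follows.

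Assume $d\equiv 0$, so $P(x)=x$ and $P'(x)=1$ for all $x\in\mathbb{R}$; by Proposition~\ref{prop:der} this is equivalent to $2\pi a+(b-a)\,m(A_{in}(x))=0$ for all $x\in\mathbb{R}$. The first step is to show $a=0$. For this I would argue that for $|x|$ large the solution $u(t,0,x)$ stays in a single outer zone on all of $[0,2\pi]$, so that $A_{in}(x)=\emptyset$: using the explicit expression \eqref{f3}, if $a>0$ the field $ax+(b-a)+\mu\sin t$ is strictly positive and bounded below on $x\geq 1$ once $x$ is large, so $u$ increases and remains $\geq 1$; if $a<0$, the inhomogeneous part of \eqref{f3} is bounded in absolute value by $(|b-a|+|\mu|)/|a|$, while the homogeneous term is $e^{at}x\geq e^{2\pi a}x$ on $[0,2\pi]$, so again $u(t,0,x)\geq 1$ for $x$ large. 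In either case $m(A_{in}(x))=0$ and the identity forces $2\pi a=0$, a contradiction; hence $a=0$. The second step handles $a=0$: the identity now reads $b\,m(A_{in}(x))=0$ for all $x$. If $b\neq 0$ then $m(A_{in}(x))=0$ for all $x$, but taking $x=0$ we have $u(0,0,0)=0$ and $t\mapsto u(t,0,0)$ continuous, so $|u(t,0,0)|<1$ on some $[0,\delta]$ with $\delta>0$, giving $m(A_{in}(0))\geq\delta>0$. This contradiction yields $b=0$, so $a=b=0$ and the proof is complete.

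I do not expect a genuine obstacle here: everything reduces to Proposition~\ref{prop:der} and the explicit linear formulas \eqref{f1}--\eqref{f3}. The only point needing a little care is that the estimate showing $A_{in}(x)=\emptyset$ must be uniform in $t\in[0,2\pi]$, which the bounds above provide; alternatively one could invoke the asymptotics of $Q$ already used in the proof of Proposition~\ref{prop:sym} to see that large initial conditions cannot produce fixed points, but the $A_{in}$-based argument is cleaner and also covers $a<0$ uniformly.
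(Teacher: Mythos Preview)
Your proof is correct. The paper's approach is slightly more direct: rather than passing through $P'(x)=1$ and Proposition~\ref{prop:der}, it uses the identity $P(x)=x$ itself together with the explicit formula~\eqref{f1}. For $x\ll -1$ the solution remains in the zone $x\leq -1$ on $[0,2\pi]$, so~\eqref{f1} gives
\[
x=e^{2\pi a}x+\int_0^{2\pi}(a-b+\mu\sin s)\,e^{a(2\pi-s)}\,ds;
\]
matching the coefficient of $x$ forces $e^{2\pi a}=1$, hence $a=0$, and then the constant term reduces to $-2\pi b$, hence $b=0$, all in one stroke. Your route via $P'$ splits this into two steps (first $a=0$ from $m(A_{in}(x))=0$ for $|x|$ large, then $b=0$ from $m(A_{in}(0))>0$), which is equally valid and makes pleasant use of Proposition~\ref{prop:der}; the paper's version avoids the second step at the cost of writing out the full linear formula. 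Both arguments rest on the same key observation that large initial data stay in a single outer zone, and both leave the routine bootstrap justifying this implicit. One small wording issue: in your $a>0$ case the field is not positive on all of $\{x\geq 1\}$, only on $\{x\geq x_0\}$ for $x_0$ large; the invariance argument you sketch still goes through, but the same explicit-formula bound you used for $a<0$ would handle both signs uniformly.
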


\begin{proof} Note that by Remark~\ref{rema:signo}, we can assume, without loss of generality, that $b,\mu\in(-\infty, 0]$. If equation \eqref{eq:main1} is a global center, then $u(t,0,x)$ is periodic for all $x\in\mathbb{R}$. So, considering the vector field defined by equation \eqref{eq:main1} on the line $x=-1$, one can see that $u(t,0,x)<-1$ for every $t\in [0,2\pi]$ and $x \ll -1$ 
Therefore using \eqref{f1} we obtain
\[x = u(2\pi,0,x)= e^{2\pi a}x+\int_0^{2\pi} (a-b+\mu \sin( s)) e^{a(2\pi- s)}\,ds;\]
for all $x \ll -1$ and so  $a=b=0$. Conversely,  when $a=b=0$, equation \eqref{eq:main1} is $x'= \mu \sin(t)$, which presents clearly a global center.
\end{proof}

\begin{prop}\label{prop:b0}
Suppose $a \ne 0$ and $b=0$, then equation \eqref{eq:main1} either has a center and no limit cycles, if $0\leqslant |\mu| <1$, or it has exactly one limit cycle, if $|\mu|\geqslant 1$.
\end{prop}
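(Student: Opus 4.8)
The plan is to exploit that, for $b=0$, the middle-zone equation degenerates to $x'=\mu\sin t$ (which by itself produces a continuum of periodic arcs), combined with the explicit sign of $d'$ provided by Proposition~\ref{prop:der}. By Remark~\ref{rema:signo} (applying $t\mapsto -t$, which preserves $b=0$, and $t\mapsto t+\pi$) we may assume $a>0$ and $\mu\ge 0$.

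First I would describe the solutions living in the strip $\{|x|\le 1\}$. With $b=0$, formula \eqref{f2} reads $u(t,0,x)=x+\mu(1-\cos t)$ as long as the solution stays in $[-1,1]$; since this expression is $2\pi$-periodic in $t$, any such solution is periodic. Because for $\mu\ge 0$ one has $\min_{t}\big(x+\mu(1-\cos t)\big)=x$ and $\max_{t}\big(x+\mu(1-\cos t)\big)=x+2\mu$, a short continuity argument shows that $u(\cdot,0,x)$ remains in $[-1,1]$ on $[0,2\pi]$ precisely when $x\in I:=[-1,\,1-2\mu]$, in which case $d(x)=u(2\pi,0,x)-x=0$. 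The interval $I$ is non-degenerate if $\mu<1$, reduces to the single point $\{-1\}$ if $\mu=1$, and is empty if $\mu>1$.

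Next I would pin down $d$ off $I$. If $x\notin I$ then the solution leaves the closed strip on a set of positive measure, so $m(A_{in}(x))<2\pi$; hence, by Proposition~\ref{prop:der} and $a>0$, $d'(x)=e^{\,a(2\pi-m(A_{in}(x)))}-1>0$. Thus $d$ is strictly increasing on every connected component of $\mathbb{R}\setminus I$. If $\mu<1$, then $\mathbb{R}\setminus I=(-\infty,-1)\cup(1-2\mu,\infty)$, and since $d$ vanishes on $I$ and is strictly increasing on the two complementary rays, necessarily $d<0$ on $(-\infty,-1)$ and $d>0$ on $(1-2\mu,\infty)$; therefore the zero set of $d$ is exactly $I$, a center, with no isolated point, i.e. no limit cycles. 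If $\mu\ge 1$, then $I$ is empty or a single point, so $d'\ge 0$ on all of $\mathbb{R}$ with $d'>0$ off at most one point, whence $d$ is strictly increasing on $\mathbb{R}$; moreover, exactly as in the proof of Proposition~\ref{prop:sym}, for $|x|$ large the solution remains in a single outer zone, so by \eqref{f1} and \eqref{f3} the function $d$ is affine there with slope $e^{2\pi a}-1>0$, giving $d(x)\to\pm\infty$ as $x\to\pm\infty$. Consequently $d$ has exactly one zero, and the corresponding (unique) periodic solution is automatically a limit cycle.

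The routine computations are harmless; the points requiring genuine care are the two equivalences that localize the zero set of $d'$ — that $x\in I$ really yields $m(A_{in}(x))=2\pi$ (so $d\equiv 0$ on $I$) while $x\notin I$ yields $m(A_{in}(x))<2\pi$ — and the borderline value $\mu=1$, where $d'$ vanishes at a single point yet $d$ stays strictly monotone. This last case relies on $a\neq 0$, which forces the slope $e^{2\pi a}-1$ at infinity to be nonzero, so that $d$ is strictly monotone and its unique zero gives a genuine (if non-hyperbolic) limit cycle rather than part of a center.
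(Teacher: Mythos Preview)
Your proof is correct and follows essentially the same approach as the paper: both use Proposition~\ref{prop:der} to get $d'(x)=e^{a(2\pi-m(A_{in}(x)))}-1$, identify via \eqref{f2} the set of initial conditions whose solutions stay in the strip, and conclude by monotonicity of $d$ off that set. The only minor difference is that for existence of the limit cycle when $|\mu|\ge 1$ you appeal directly to the affine behavior of $d$ at infinity, whereas the paper invokes Proposition~\ref{prop:sym}; both are equally valid and your treatment of the borderline case $\mu=1$ is slightly more explicit.
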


\begin{proof}
When $b=0$, by Proposition~\ref{prop:der}, the derivative of displacement function $d$ is $d'(x)=e^{a(2\pi- m(A_{in}(x)))}-1$. Therefore, since $m(A_{in}(x)) \leq 2\pi$, for $a\ne 0$ we have that $a d'(x)\geqslant 0$ and that it is zero only for those $x$ such that $|u(t,0,x)|\leq 1$, for every $t \in [0,2\pi]$. Now,  according to \eqref{f2},  if $| u(s,0,x) | \leq 1$ for every $s \in [0,t]$, then $u(t,0,x) = x+\mu(1-\cos(t))$, and we conclude that there exists an open interval $I \subset [-1,1]$ such that $|u(t,0,x)|\leq 1$ for every $x \in I$ and $t \in [0,2\pi]$ if and only if $0\leqslant |\mu| <1$. Moreover, in this case, $d(x) = 0$ for every $x \in I$.

In summary, equation \eqref{eq:main1} has a center with no limit cycles if $0\leqslant |\mu| <1$ and has at most one limit cycle otherwise. By Proposition \ref{prop:sym} we obtain the desired result.
\end{proof}

\begin{prop}\label{prop:a0}
If $ab\geq 0$ and $b \neq 0$, then \eqref{eq:main1} has exactly one limit cycle, the symmetric periodic solution $u^s(t)$ given in Proposition \ref{prop:sym}. Moreover, the limit cycle is hyperbolic, globally asymptotically stable when $b<0$ and unstable when $b>0$.
\end{prop}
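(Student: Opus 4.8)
The plan is to read everything off the derivative formula of Proposition~\ref{prop:der} together with the existence statement in Proposition~\ref{prop:sym}. By Remark~\ref{rema:signo}, the change $t\to -t$ replaces $(a,b)$ by $(-a,-b)$ (preserving the sign of $ab$) and reverses time, so it suffices to treat the case $b<0$, $a\le 0$; the case $b>0$, $a\ge 0$ then follows by time reversal, which turns a globally asymptotically stable limit cycle into an unstable one.

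First I would rewrite the exponent appearing in
\[
d'(x)=e^{2\pi a+(b-a)\,m(A_{in}(x))}-1
\]
as $G(x):=a\,m(A_{out}(x))+b\,m(A_{in}(x))$, where $A_{out}(x):=[0,2\pi]\setminus A_{in}(x)$, so that $m(A_{out}(x))+m(A_{in}(x))=2\pi$ and, when $a\le 0$ and $b<0$, both summands of $G(x)$ are nonpositive. Hence $G(x)\le 0$ for all $x$, and $G(x)=0$ forces $m(A_{in}(x))=0$ (because $b\ne 0$) and then $2\pi a=a\,m(A_{out}(x))=0$, i.e.\ $a=0$. Consequently: if $a<0$, then $d'(x)<0$ for all $x$ and $d$ is strictly decreasing; if $a=0$ (with $b<0$), then $d'(x)\le 0$ for all $x$, with $d'(x)=0$ precisely on the set $Z:=\{x: m(A_{in}(x))=0\}$ of initial conditions whose solution stays in $\{|x|\ge 1\}$ throughout $[0,2\pi]$. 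On each connected component of $Z$ the solution $d$ is constant (since $d'\equiv 0$ there), so to conclude that $d$ has at most one zero it remains only to check that these constant values are nonzero.

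That is the one point that will require an explicit computation. On a component of $Z$ the solution stays either entirely in $\{x>1\}$ or entirely in $\{x<-1\}$; setting $a=0$ in \eqref{f3}, resp.\ \eqref{f1}, gives $u(t,0,x)=x+bt+\mu(1-\cos t)$, resp.\ $u(t,0,x)=x-bt+\mu(1-\cos t)$, whence $d(x)=u(2\pi,0,x)-x=2\pi b<0$ on the upper components and $d(x)=-2\pi b>0$ on the lower ones. Thus $d$ is non-increasing with all its flat parts at nonzero levels, and Proposition~\ref{prop:sym} provides a zero of $Q-\mathrm{id}$, hence of $P-\mathrm{id}=d$; therefore $d$ has exactly one zero $\bar x^s$, with $d>0$ on $(-\infty,\bar x^s)$ and $d<0$ on $(\bar x^s,\infty)$, and the corresponding limit cycle is the symmetric one $u^s(t)$. (When $a<0$ the set $Z$ plays no role, $d$ being strictly monotone outright; the mirror case $b>0$, $a\ge0$ is identical with $\ge$ in place of $\le$, or follows from time reversal.)

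For stability, note that $m(A_{in}(\bar x^s))>0$, since otherwise $\bar x^s\in Z$ and $d(\bar x^s)\ne 0$; hence $G(\bar x^s)<0$ when $b<0$, so $0<P'(\bar x^s)=e^{G(\bar x^s)}<1$, i.e.\ $\bar x^s$ is a hyperbolic attracting fixed point of $P$. Global asymptotic stability then follows since $P$ is increasing ($P'>0$) and $d$ changes sign only at $\bar x^s$: for $x<\bar x^s$ one has $x<P(x)<\bar x^s$, so the forward orbit increases monotonically to $\bar x^s$, and symmetrically for $x>\bar x^s$. When $b>0$ the same computation gives $G(\bar x^s)>0$, so $P'(\bar x^s)>1$ and the cycle is unstable. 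The main obstacle — essentially the only nonroutine step — is verifying that $d$ does not vanish on the outer set $Z$ in the degenerate case $a=0$; everything else is a monotonicity argument governed entirely by the sign of $G$.
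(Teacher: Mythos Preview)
Your argument is correct and follows essentially the same route as the paper: both proofs read off the sign of $d'(x)=e^{G(x)}-1$ from the formula in Proposition~\ref{prop:der}, observe that $G(x)=a\,m(A_{out}(x))+b\,m(A_{in}(x))$ has a definite sign when $ab\ge 0$ and $b\ne 0$, treat the borderline case $a=0$ by computing $d$ explicitly on the outer regions via \eqref{f1}/\eqref{f3}, and then invoke Proposition~\ref{prop:sym} for existence. The only organizational difference is that you normalize to $b<0,\ a\le 0$ via time reversal and then split on $a<0$ versus $a=0$, whereas the paper keeps both signs and splits on $a=0$ versus $ab>0$; you also spell out the global-attraction argument for $P$ in more detail than the paper does.
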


\begin{proof}
Recall that, by Proposition~\ref{prop:der}, $d'(x) = e^{2 \pi a + (b-a) m(A_{in}(x))}-1$. 

If $a = 0$, then $d'(x)=e^{b\, m(A_{in}(x))}-1$. So, since $m(A_{in}(x)) \geqslant 0$, we have $bd'(x)\geqslant 0$ and, for $b\ne 0$,  it is zero only for those $x$ such that $|u(t,0,x)|\geq 1$, for every $t \in [0,2\pi]$. Therefore, by \eqref{f1} or \eqref{f3} as appropriate, we conclude that $d(x) = -2\pi b\sgn(x)$ for those $x$ such that $|u(t,0,x)|\geqslant 1$ for $t \in [0,2\pi]$. Thus, when $b\ne 0$, it is easy to see that equation \eqref{eq:main1} has at most one limit cycle $u(t,0,\bar x)$ and it is verified $bd'(\bar x)> 0$. Thus, from Proposition \ref{prop:sym}, the limit cycle unique and, moreover, it is hyperbolic, globally asymptotically stable when $b<0$ and unstable when $b>0$.

If $ab > 0$, then $d'(x)$ does not vanish because $m(A_{in}(x)) \in [0,2\pi].$ So, we conclude that $d'(x)$ has a constant sign. Therefore, equation \eqref{eq:main1} has at most one limit cycle  and its hyperbolicity and stability follow from $\sgn(d'(x))=\sgn(a (2\pi-m(A_{in}(x))+b\, m(A_{in}(x))))$. By Proposition \ref{prop:sym} the conclusion follows.
\end{proof}

\section{Finiteness of the number of limit cycles}\label{Sect:Finiteness}

Let us show that equation \eqref{eq:main1} has finitely many limit cycles. Indeed, we will prove that there exists a constant $L^\ast\in\mathbb{N}$, which does not depend on $a,b$ and $\mu$, such that equation \eqref{eq:main1} has no more than $L^\ast$ limit cycles.

Obviously, the one-region case of \eqref{eq:main1} has at most one limit cycle, since it corresponds to a linear differential equation. The general linear piecewise equation with two zones, $x'=f(t,x)$, where $f$ is linear and piecewise with respect to $x$ and $2\pi$-periodic with respect to $t$, is known to can have any number of limit cycles, as shown in \cite{BravoFernandezTineo07,CollGasullProhens}. However, there are certain families for which finiteness of limit cycles occurs (see \cite{GasullZhao,BravoFernandezOjeda}). In particular, it is known that the two-region equations of the form $x' = g(x) + \mu \sin t$, where $g$ is a continuous piecewise linear function with two zones of linearity, have at most two limit cycles (see \cite{Carmona200471}).

Therefore, we only need to prove that equation \eqref{eq:main1} has finitely many isolated periodic solutions passing through all three zones. To avoid dealing with singular periodic solutions, we consider equation \eqref{eq:main1} perturbed by $\lambda$, that is, the differential equation
\begin{equation}\label{eq:main2}
x' = f(x) + \mu \sin(t) + \lambda,
\end{equation}
where $f(x)$ is defined as in \eqref{eq:f}. As the corresponding displacement function depends monoto\-nously on $\lambda$, if $d(x,\lambda)$ denotes the displacement function fixing $a,b,\mu$, and varying $\lambda$, then $d(x,\lambda)=0$ defines implicitly a function $\lambda(x)$, that is, for every $x\in\mathbb{R}$ there is a unique value $\lambda(x)$ such that $d(x,\lambda(x))=0$. Moreover, as $x\to d(x,\lambda)$ is  differentiable in $\mathbb{R}$ and analytic except for the finite values corresponding to solutions with a tangent to the lines $x=\pm 1$
(see e.g. \cite{BravoFernandezTineo07}), the same holds for $\lambda(x)$.
Let $\lambda(x_0)=\lambda'(x_0)=0$ and assume that $x_0$ is a maximum (minimum point) of $\lambda(x)$. Then, there exists a neighborhood of $(x_0,0)$ such that for every $\lambda<0$ ($\lambda>0$), $d(x,\lambda)=0$ has two non-singular solutions. If $x_0$ is neither a maximum nor a minimum, then for each $\lambda\neq0$, $d(x,\lambda)=0$ has one non-singular solution. 
Therefore, it is possible to state the following result.
\begin{lema}
\label{lemalmitecycles}
  The number of limit cycles of equation~\eqref{eq:main1} is less than or equal to the number of non-singular periodic solutions of equation~\eqref{eq:main2}.
\end{lema}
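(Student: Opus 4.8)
The statement to prove is Lemma~\ref{lemalmitecycles}: the number of limit cycles of \eqref{eq:main1} is at most the number of non-singular periodic solutions of \eqref{eq:main2}. The plan is to argue that every limit cycle of \eqref{eq:main1} — i.e.\ every isolated zero $x_0$ of $d(x,0)$ — can be "opened up" by the parameter $\lambda$ into at least one non-singular periodic solution of \eqref{eq:main2} for suitable small $\lambda$, and that distinct limit cycles give rise to distinct such solutions, so that counting the latter bounds the former from above.

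First I would make precise the monotonicity already invoked in the excerpt: since $\partial d/\partial\lambda$ has constant sign (this follows from formula \eqref{eq:sollinear} with $q$ replaced by $q+\lambda$, which shows $u(2\pi,0,x)$ is strictly increasing in $\lambda$ — one may even differentiate $u$ with respect to $\lambda$ and check the resulting linear variational equation has positive solution), the equation $d(x,\lambda)=0$ defines implicitly a single-valued function $\lambda=\lambda(x)$ with $\lambda(x_0)=0$ at every periodic point $x_0$ of \eqref{eq:main1}. By the regularity statement quoted from \cite{BravoFernandezTineo07}, $x\mapsto d(x,\lambda)$ is $\mathcal C^1$ and piecewise analytic, with finitely many non-analytic points corresponding to solutions tangent to $x=\pm1$; hence $\lambda(x)$ is $\mathcal C^1$ and piecewise analytic as well. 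The key observation is that $\lambda'(x_0)=0$ exactly when $x_0$ is a singular periodic solution of \eqref{eq:main1}, since $\lambda'(x_0)=-d_x(x_0,0)/d_\lambda(x_0,0)$ and $d_\lambda\neq0$; and the sign of $\lambda'$ near a point where it is nonzero tells us how many non-singular zeros of $d(\cdot,\lambda)$ appear nearby.

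The counting argument then runs as follows. Fix a limit cycle of \eqref{eq:main1}, say at $x_0$, so $\lambda(x_0)=0$. If $\lambda'(x_0)\neq0$, then for all small $\lambda\neq0$ of one fixed sign there is a unique $x$ near $x_0$ with $\lambda(x)=\lambda$, and since $\lambda'(x)\neq0$ there, $d_x(x,\lambda)\neq0$, i.e.\ it is a non-singular periodic solution of \eqref{eq:main2}. If $\lambda'(x_0)=0$, then — because $\lambda(x)$ is piecewise analytic and not constant near $x_0$ (an interval of zeros of $d(\cdot,0)$ would be a center, but a limit cycle is by definition isolated) — $x_0$ is an isolated critical point of $\lambda$: either a strict local max/min, producing \emph{two} non-singular solutions for small $\lambda$ of the appropriate sign, or an inflection-type point, producing \emph{one}. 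In every case at least one non-singular periodic solution of \eqref{eq:main2} is produced in an arbitrarily small neighborhood of $x_0$. Since the limit cycles of \eqref{eq:main1} are isolated, their neighborhoods can be taken pairwise disjoint, and one may choose a single sign and magnitude of $\lambda$ (shrinking if necessary, using that only finitely many limit cycles could ever be involved — or arguing locally limit-cycle by limit-cycle against the total count) so that the corresponding non-singular solutions of \eqref{eq:main2} are all distinct. Therefore the number of limit cycles of \eqref{eq:main1} is at most the number of non-singular periodic solutions of \eqref{eq:main2}.

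The main obstacle I anticipate is the global bookkeeping of signs: different limit cycles $x_0$ might a priori require different signs of $\lambda$ (a max wants $\lambda<0$, a min wants $\lambda>0$) to split, so one cannot literally pick one $\lambda$ that simultaneously opens all of them. The clean way around this is to phrase the conclusion as a statement about the function $\lambda(x)$ on all of $\mathbb R$: the number of limit cycles of \eqref{eq:main1} equals the number of zeros of $\lambda(x)$ (counted without multiplicity), and for a piecewise-analytic function this is bounded by the number of sign changes plus critical behavior, which is in turn realized by non-singular zeros of $d(\cdot,\lambda)$ for appropriate $\lambda$ of each sign. Equivalently, one counts limit cycles of \eqref{eq:main1} as a subset of $\{x:\lambda(x)=0\}$ and bounds each by the non-singular solutions of \eqref{eq:main2} that persist to the graph of $\lambda$; since the latter are exactly the points with $\lambda'(x)\neq0$ lying at height $\lambda$, summing over the finitely many limit cycles and letting $\lambda$ approach $0$ from the correct side at each gives the bound. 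Making this last amalgamation rigorous — i.e.\ that the total count over $\lambda>0$ small plus the total over $\lambda<0$ small dominates the number of zeros of $\lambda(x)$, hence of limit cycles of \eqref{eq:main1} — is the only genuinely delicate point, and it is handled by the local analysis of $\lambda$ at each of its (finitely many, by piecewise analyticity) zeros sketched above.
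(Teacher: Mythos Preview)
Your argument is correct and mirrors the paper's own reasoning almost step for step: both introduce the implicit function $\lambda(x)$ via the strict monotonicity of $d(x,\lambda)$ in $\lambda$, invoke its $\mathcal C^1$ and piecewise-analytic regularity (citing \cite{BravoFernandezTineo07}), and then carry out the local max/min/inflection analysis at each zero of $\lambda$. You are in fact more scrupulous than the paper in flagging the global sign-bookkeeping issue; the paper's discussion preceding the lemma leaves that amalgamation just as implicit as your final paragraph does, and the clean resolution (each touching zero contributes two regular preimages on one side, so $\max_\pm\#\lambda^{-1}(\lambda_0^\pm)$ already dominates the number of isolated zeros) is the one both arguments are reaching for.
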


Before stating our finiteness theorem, we introduce a useful technical result for its proof.

\begin{lema}\label{lema:3zonas}
Consider $\lambda \in \mathbb{R}$ and denote by $u(t,\tau,x,\lambda)$ the maximal solution of the differential equation \eqref{eq:main2} determined by the initial condition $u(\tau,\tau,x,\lambda)=x$. If  $u(t,0,x,\lambda)$ is a periodic solution of equation \eqref{eq:main2} that crosses both $x=1$ and $x=-1$, then there exist $t_1<t_2<t_3<t_4<t_1+2\pi$, $t_1\in[0,2\pi)$, such that 
\begin{equation}\label{eq:3z}
\begin{split}
e^{-b t_1} (g(t_1,b)+\lambda) + e^{-b t_2} (g(t_2+\pi,b)-\lambda) = 0; \\
e^{-a t_2} (g(t_2+\pi,a)-\lambda) - e^{-a t_3 } (g(t_3+\pi,a)-\lambda) = 0;\\
e^{-b t_3} (g(t_3+\pi,b)-\lambda) + e^{-b t_4} (g(t_4,b)+\lambda)= 0;\\
e^{-a t_1} (g(t_1,a)+\lambda) - e^{-a (t_4-2 \pi)} (g(t_4,a)+\lambda) = 0,
\end{split}
\end{equation}
where $g(t,s) := \Big(\mu s \big(s \sin(t)+\cos(t)\big)+b(s^2+1)\Big)/(s^2+1)$. 
\end{lema}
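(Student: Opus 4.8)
The plan is to exhibit $t_1<t_2<t_3<t_4<t_1+2\pi$ as the four instants at which $u$ changes linearity zone during one passage top zone $\to$ middle zone $\to$ bottom zone $\to$ middle zone, and then to read off \eqref{eq:3z} by integrating, on each of the four intervals, the linear equation that $u$ obeys there, combined with the elementary identity $g(t+\pi,s)=2b-g(t,s)$ (immediate from $\sin(t+\pi)=-\sin t$, $\cos(t+\pi)=-\cos t$ and the definition of $g$).

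\emph{Extracting the switching times.} Since $u$ attains values $>1$ and values $<-1$, it also attains values in $(-1,1)$, so over one period its graph meets all three linearity regions. On each maximal interval where $u$ stays strictly in one region, $u$ solves a linear equation and is a non-constant analytic function, so $\{t:|u(t)|=1\}$ has no accumulation points and one period decomposes into finitely many such excursions, with consecutive excursions in different regions and the top and bottom regions never adjacent. From this and the hypothesis that $u$ crosses both lines, one selects a top excursion, written $(t_4,t_1+2\pi)$ with $t_1\in[0,2\pi)$ --- of length $<2\pi$, since otherwise periodicity would force $u>1$ everywhere --- and a bottom excursion $(t_2,t_3)$ with $t_1<t_2<t_3<t_4$, chosen so that $u$ remains in $[-1,1]$ throughout $[t_1,t_2]$ and throughout $[t_3,t_4]$. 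Then $u(t_1)=u(t_4)=1$, $u(t_2)=u(t_3)=-1$, with $u$ in the middle zone on $[t_1,t_2]$ and $[t_3,t_4]$, in the bottom zone on $[t_2,t_3]$, and in the top zone on $[t_4,t_1+2\pi]$.

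\emph{Deriving the identities.} On each subinterval $u$ satisfies $x'=px+c+\mu\sin t+\lambda$ with $(p,c)=(b,0)$ in the middle zone, $(a,b-a)$ in the top zone, and $(a,a-b)$ in the bottom zone. On $[t_1,t_2]$ I would write this as $(e^{-bt}u)'=e^{-bt}(\lambda+\mu\sin t)$, integrate from $t_1$ to $t_2$ using $\int e^{-bt}\sin t\,dt=-e^{-bt}(b\sin t+\cos t)/(b^2+1)$ and $u(t_1)=1$, $u(t_2)=-1$, multiply by $b$, and use $\mu b(b\sin t+\cos t)/(b^2+1)=g(t,b)-b$; collecting the terms carrying $e^{-bt_1}$ and those carrying $e^{-bt_2}$ and absorbing the constant produced by the endpoint value $-1$ via $g(t_2+\pi,b)=2b-g(t_2,b)$ yields the first equation of \eqref{eq:3z}. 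The identical computation on $[t_2,t_3]$ (rate $a$, constant $a-b$, both endpoints $-1$, so both terms of the form $g(\cdot+\pi,a)-\lambda$) gives the second, on $[t_3,t_4]$ (rate $b$, endpoints $-1$ then $1$) the third, and on $[t_4,t_1+2\pi]$ (rate $a$, constant $b-a$, both endpoints $1$) a relation with factors $e^{-at_4}$ and $e^{-a(t_1+2\pi)}$; using the periodicity $u(t_1+2\pi)=u(t_1)=1$ together with $g(t_1+2\pi,a)=g(t_1,a)$ and multiplying by $e^{2\pi a}$ turns the latter into the fourth equation of \eqref{eq:3z}. Throughout, whether a term reads $g(t_i,s)+\lambda$ or $g(t_i+\pi,s)-\lambda$, and the sign joining the two terms, is forced solely by the endpoint values $\pm 1$, which is exactly the pattern in \eqref{eq:3z}.

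The integrations in the last step are routine bookkeeping; the step with real content is the extraction of the switching times, namely showing that a periodic solution crossing both lines admits a clean top $\to$ middle $\to$ bottom $\to$ middle decomposition, with no spurious excursion occurring inside $[t_1,t_2]$, $[t_3,t_4]$ or $[t_4,t_1+2\pi]$. I expect this to be the main obstacle, and to rely on the scalar character of the equation --- in particular on the monotone behaviour of solutions within each zone between two consecutive crossings of the same line --- rather than on the explicit formulas.
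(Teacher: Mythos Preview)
Your approach coincides with the paper's. The paper also picks $t_1<t_2<t_3<t_4$ with exactly the clean zone pattern you describe (it asserts $|u|<1$ on $(t_1,t_2)\cup(t_3,t_4)$, $u<-1$ on $(t_2,t_3)$, $u>1$ on $(t_4,t_1+2\pi)$), writes down the four matching conditions $u(t_2,t_1,1,\lambda)=-1$, $u(t_3,t_2,-1,\lambda)=-1$, $u(t_4,t_3,-1,\lambda)=1$, $u(t_1+2\pi,t_4,1,\lambda)=1$, and then says that inserting the explicit linear solution \eqref{eq:sollinear} on each subinterval ``one can easily check'' these become \eqref{eq:3z}. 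Your integrate-and-bookkeep paragraph is precisely what that last sentence is abbreviating.

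The point you flag as the ``main obstacle'' --- that a three-zone periodic solution really admits a four-interval decomposition with no spurious top or bottom excursion inside $[t_1,t_2]$, $[t_3,t_4]$, or $[t_4,t_1+2\pi]$ --- is \emph{not} argued in the paper either; the existence of such $t_i$ with the stated zone pattern is simply asserted. So your write-up is already at the level of detail of the published proof. One caution: the mechanism you propose for filling the gap, ``monotone behaviour of solutions within each zone between two consecutive crossings of the same line'', does not hold as stated --- on a middle interval $u$ solves $x'=bx+\mu\sin t+\lambda$, whose solutions have the form $Ce^{bt}+A\sin t+B\cos t+D$ and are not monotone in general --- so if you want to close this gap you will need a different argument (or, as the paper does, leave it as an assertion; for the application to the Khovanski\u{\i} count all that is used is that each three-zone periodic solution produces some solution of \eqref{eq:3z}).
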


\begin{proof}
If $u(t,0,x,\lambda)$ is a periodic solution of equation \eqref{eq:main2} that crosses both $x=1$ and $x=-1$, then there exist $t_1<t_2<t_3<t_4<t_1+2\pi$, $t_1\in[0,2\pi)$, such that
 $|u(t,t_1,1,\lambda)|<1$ for $t\in(t_1,t_2)\cup (t_3,t_4)$, $u(t,t_1,1,\lambda)<-1$ for $t\in(t_2,t_3)$, $u(t,t_1,1,\lambda)>1$ for $t\in(t_4,t_1+2\pi)$, and

\begin{equation}\label{sis:3z}
\left\{
\begin{array}{rcr}
u(t_2,t_1,1,\lambda) & = & -1\\
u(t_3, t_2,-1,\lambda) & = & -1 \\
u(t_4,t_3,-1,\lambda) & = & 1\\
u(t_1 + 2 \pi,t_4,1,\lambda) & = & 1
\end{array}\right.
\end{equation}
Now, using properly the expression of the solution \eqref{eq:sollinear} for the linear equation \eqref{eq:linearode} in each linearity zone, one can easily check that system \eqref{sis:3z} is equivalent to system \eqref{eq:3z}.
\end{proof}

\begin{theo}\label{Th:finiteness}
There exists a constant $L^\ast\in\mathbb{N}$, which does not depend on $a,b$ and $\mu$, such that equation \eqref{eq:main1} has no more than $L^\ast$ limit cycles.
\end{theo}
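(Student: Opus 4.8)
The plan is to combine the two lemmas just proved to reduce the statement to counting, inside a fixed bounded region, the solutions of the system~\eqref{eq:3z}, and then to bound that number by an absolute constant via a parameter-uniform finiteness theorem for Pfaffian (equivalently, globally subanalytic--exponential) functions.

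By Lemma~\ref{lemalmitecycles} it is enough to bound the number of non-singular periodic solutions of~\eqref{eq:main2}, uniformly in $a,b,\mu$ and in the auxiliary parameter $\lambda$. I would split them according to the zones they visit. If the range of the solution is contained in $[-1,1]$ it is the unique periodic solution of the linear equation $x'=bx+\mu\sin t+\lambda$, so among the non-singular ones there is at most one. If its range is contained in $[-1,\infty)$ or in $(-\infty,1]$ but not in $[-1,1]$, the solution is a hyperbolic --- hence isolated --- periodic solution of one of the two \emph{continuous two-zone} equations $x'=g(x)+\mu\sin t$ obtained from~\eqref{eq:main2} by prolonging the middle-zone dynamics past $x=1$ or past $x=-1$; by~\cite{Carmona200471} each of those has at most two limit cycles, contributing at most four. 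Everything therefore reduces to bounding the non-singular periodic solutions whose range meets both $(1,\infty)$ and $(-\infty,-1)$.

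To each such solution, Lemma~\ref{lema:3zonas} attaches a tuple $(t_1,t_2,t_3,t_4)$ with $0\le t_1<t_2<t_3<t_4<t_1+2\pi$ solving~\eqref{eq:3z}, and distinct solutions give distinct tuples (the orbit is the one through $(t_1,1)$, so $t_1$ already determines it). All these tuples lie in the fixed bounded semialgebraic set $K=\{\,0\le t_1<t_2<t_3<t_4<t_1+2\pi\,\}\subset[0,4\pi)^4$, which does not depend on the parameters, so it suffices to bound the number of isolated solutions of~\eqref{eq:3z} in $K$ by a constant independent of $a,b,\mu,\lambda$. The key point is that, on the bounded box $[0,4\pi]^4$, the four functions on the left of~\eqref{eq:3z} are polynomials --- of absolutely bounded degree and with coefficients polynomial in $\mu,\lambda$ --- in the twelve functions $e^{-at_i},\ e^{-bt_i},\ \sin t_i,\ \cos t_i$ $(i=1,\dots,4)$, and these form a Pfaffian chain in $(t_1,\dots,t_4)$ whose length and degrees are absolute constants: the exponents $a,b$ enter only as coefficients of the defining relations $\partial_{t_i}e^{-at_i}=-a\,e^{-at_i}$ and the analogous ones for $b$. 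Thus~\eqref{eq:3z} is a system of four equations in four unknowns whose Pfaffian format is bounded uniformly in $a,b,\mu,\lambda$, and Khovanskii's theorem on Pfaffian systems --- which bounds, in terms of the format alone, the number of connected components (in particular the number of isolated or of non-degenerate zeros) of the solution set --- yields an absolute constant $C$ bounding the number of such solutions in $K$. Equivalently, the solution sets of~\eqref{eq:3z} form a family definable in the o-minimal structure $\mathbb{R}_{\mathrm{an},\exp}$ (on the bounded set $K$ the exponential and trigonometric terms are restricted-analytic or built from $\exp$), and uniform finiteness for definable families gives such a $C$. One may then take $L^\ast:=C+5$ (or $2(C+5)$, if one carries explicitly the sign-of-$\lambda$ bookkeeping hidden in Lemma~\ref{lemalmitecycles}).

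I expect the main obstacle to be uniformity over the exponents $a,b$ rather than over the amplitudes: a zero bound for ``exponential polynomials'' that depended on the frequencies of $e^{-at}$, $e^{-bt}$ would be useless, and it is precisely the Pfaffian/o-minimal machinery --- in which $a,b$ are mere coefficients --- that avoids this. Two further points need care: first, $\sin$ and $\cos$ are not Pfaffian on all of $\mathbb{R}$, so one must work throughout on the bounded domain $[0,4\pi]^4$ (legitimate, since all crossing instants of a periodic solution lie in $[0,4\pi)$) and invoke the version of Khovanskii's theory for Pfaffian functions on a bounded domain; second, one must match non-singular periodic solutions with non-degenerate (or at least isolated) solutions of~\eqref{eq:3z}, a routine verification based on $d'\neq0$, so that the component bound applies. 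A fully elementary alternative to this second half is to eliminate the unknowns of~\eqref{eq:3z} one at a time, each step controlled by Rolle's theorem applied to an exponential--trigonometric function of a single variable whose number of zeros on an interval of length $\le4\pi$ is bounded independently of the parameters (every oscillatory frequency being exactly $1$); this route additionally produces an explicit, parameter-free value of $L^\ast$.
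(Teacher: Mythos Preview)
Your proposal is correct and follows essentially the same route as the paper: reduce via Lemma~\ref{lemalmitecycles} to non-singular periodic solutions of~\eqref{eq:main2}, dispose of the one- and two-zonal ones using linearity and the bound from~\cite{Carmona200471}, and for the three-zonal ones apply Khovansk\u{\i}'s theorem to the system~\eqref{eq:3z} viewed as polynomial in the auxiliary exponential/trigonometric variables. Your write-up is in fact more careful than the paper's on two points the paper leaves implicit: the need to work on a bounded domain so that $\sin,\cos$ fit into the Pfaffian framework, and the correspondence between non-singular periodic solutions and non-degenerate (hence isolated) solutions of~\eqref{eq:3z}.
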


\begin{proof}
Note, firstly, that by means of Theorem \ref{teor:unicilc}, equation \eqref{eq:main1} has at most one limit cycle when $ab\geqslant 0$. Therefore, by virtue of Lemma \ref{lemalmitecycles}, to prove the finiteness of the number of limit cycles for equation \eqref{eq:main1}, we must prove the finiteness of the number of non–singular periodic solutions of equation \eqref{eq:main2} for the case $ab<0$. Note that for $\mu=0$, equation \eqref{eq:main2} does not have non-constant periodic solutions. 

Taking into account that a linear differential equation in the form \eqref{eq:linearode} has at most one non-singular periodic solution, equation \eqref{eq:main2} has at most three one-zonal non-singular periodic solutions (that is, each of them lives in a zone of linearity). From Theorem 4.19 of \cite{Carmona200471}, equation \eqref{eq:main2} has at most four two-zonal non-singular periodic solutions (each of them only visits two zones of linearity). Therefore, to prove the finiteness of the number of non-singular periodic solutions for equation \eqref{eq:main2} for the case $ab<0$, we must prove the finiteness of the number non-singular periodic solutions of equation \eqref{eq:main2} that cross both $x=1$ and $x=-1$.

From Lemma \ref{lema:3zonas}, each periodic solution of \eqref{eq:main2} that crosses both $x=1$ and $x=-1$ corresponds to a solution of system \eqref{eq:3z} which is a system of four equations in four real unknowns $t_1, \ldots, t_4$, where the equations are polynomials of degree $2$ in the following sixteen real variables $y_{i1}=e^{-a t_i}, y_{i2}=e^{-b t_i}, u_i=\sin(t_i)$ and $v_i=\cos(t_i),\ i\in\{1,2,3,4\}$. So, by Khovanski\u{\i}'s Theorem (see, e.g., \cite[p. 14]{Khovanskii}), we conclude that the number of non-singular (in particular, isolated) real solutions of \eqref{eq:3z} is upper bounded independently of $\lambda, a, b$ and $\mu$ and the conclusion follows.
\end{proof}

\section{Number of limit cycles for extreme values of $\mu$}\label{Sect:Compacity}
 
According to Theorem \ref{teor:unicilc}, possible bifurcations can only occur if $ab$ is strictly negative. Thus, assuming $a b < 0$, we show that there are exactly three limit cycles for $|\mu|$ close to zero (Theorem \ref{Th:-mu_small}), and exactly one non-singular limit cycle for $|\mu|$ sufficiently large (Theorem \ref{Th:-mu_large}). 

Let us start with a useful lemma.

\begin{lema}\label{le:1}
Consider equation \eqref{eq:linearode} with $p\ne 0$. The only periodic solution of equation \eqref{eq:linearode} is 
\begin{equation}
\label{eq:opeqlinear}
    v(t)=-\frac{q}{p}-\frac{\mu(\cos(t)+p\sin(t))}{p^2+1}.
\end{equation}
Moreover, this periodic solution is non-singular (that is, hyperbolic), globally asymptotically stable when $p<0$, and unstable when $p>0$.
\end{lema}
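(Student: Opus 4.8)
The plan is to exploit the explicit integral representation of solutions of linear equations recorded in \eqref{eq:sollinear}. First I would note that a periodic (hence $2\pi$-periodic) solution of \eqref{eq:linearode} corresponds to a fixed point of the associated Poincar\'e map $x\mapsto v(2\pi,0,x)=e^{2\pi p}x+\int_0^{2\pi}(q+\mu\sin s)\,e^{p(2\pi-s)}\,ds$. Since $p\neq 0$ forces $e^{2\pi p}\neq 1$, this affine map has a unique fixed point, so equation \eqref{eq:linearode} has exactly one periodic solution.

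To identify it explicitly without evaluating the integral, I would look for a particular solution by undetermined coefficients of the form $v(t)=C+A\cos t+B\sin t$. Substituting into $x'=px+q+\mu\sin t$ and matching the constant, $\cos t$, and $\sin t$ coefficients gives $C=-q/p$, $B=pA$, and $-A=p^2A+\mu$, whence $A=-\mu/(p^2+1)$ and $B=-p\mu/(p^2+1)$. This is precisely the function \eqref{eq:opeqlinear}; it is manifestly $2\pi$-periodic, and by the uniqueness just established it is the only periodic solution.

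For hyperbolicity and stability I would differentiate \eqref{eq:sollinear} with respect to $x$ to obtain $v_x(t,0,x)=e^{pt}$, so the derivative of the Poincar\'e map at the fixed point equals $e^{2\pi p}$, which differs from $1$ exactly because $p\neq 0$; hence the periodic solution is non-singular. Moreover $e^{2\pi p}<1$ if $p<0$ and $e^{2\pi p}>1$ if $p>0$, giving asymptotic stability, respectively instability. Global asymptotic stability when $p<0$ follows immediately from $v(t,0,x)-v(t)=e^{pt}\big(x-v(0)\big)\to 0$ as $t\to+\infty$ for every $x\in\mathbb{R}$.

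There is no genuine obstacle here: the statement is a routine fact about forced linear scalar equations. The only points that require a moment of care are the short algebraic check that the undetermined-coefficients solution coincides with \eqref{eq:opeqlinear}, and the remark that the hypothesis $p\neq 0$ is exactly what makes $e^{2\pi p}\neq 1$, a fact used both for the uniqueness of the periodic orbit and for its hyperbolicity.
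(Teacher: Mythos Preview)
Your argument is correct and follows essentially the same route as the paper: both use the Poincar\'e map obtained from \eqref{eq:sollinear}, observe that its slope $e^{2\pi p}\neq 1$ yields a unique fixed point and determines hyperbolicity and stability. The only cosmetic difference is that the paper evaluates the integral to locate the fixed point explicitly, whereas you recover the formula \eqref{eq:opeqlinear} by undetermined coefficients.
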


\begin{proof}
From expression \eqref{eq:sollinear}, one can deduce that the Poincaré map for equation \eqref{eq:linearode} is given by the linear map 
\[
\tilde P(x):=v(2\pi,0,x)=\frac{\left(e^{2 \pi  p}-1\right)
   \left(p^2+1\right) q+e^{2 \pi  p}
   p \left(\mu +p^2
   x+x\right)-\mu 
   p}{p^3+p}
\]
which, for $p\ne 0$, has a unique fixed point $\bar x=-q/p-\mu\left(p^2+1 \right)^{-1}$. This fixed point provides the unique solution periodic $v(t):=v(t,0,\bar x)$ given in \eqref{eq:opeqlinear}. Since the derivative of $P$ is $\tilde P'(x)=e^{2\pi p}$, the conclusion of the hyperbolicity and stability of the periodic solution $v$ is direct.
\end{proof}

Due to the previous lemma, we can determine when equation~\eqref{eq:main1} has a periodic solution in the regions $x\geq 1$ and $x\leq -1$.

\begin{prop}\label{prop:1}
Consider equation \eqref{eq:main1} with $ab<0$. The following statements are equivalent.
\begin{enumerate}
\item There exists a periodic solution $u^+$ of equation \eqref{eq:main1} such that $u^+(t) \geq 1$ for all $t\in\mathbb{R}$.
\item There exists a periodic solution $u^-$ of equation \eqref{eq:main1} such that $u^-(t) \leq -1$ for all $t\in\mathbb{R}$.
\item $|\mu|\leqslant -b \sqrt{a^2+1}/a$.
\end{enumerate}
In these cases, the periodic solutions are the only ones in the corresponding zone of linearity;  specifically,
\begin{equation}
\label{eq:opumas}
    u^+(t) = \frac{a-b}{a} - \frac{\mu(\cos(t)+a\sin(t))}{a^2+1}\quad \text{and}\quad u^-(t) = -u^+(t+\pi).
\end{equation}
In addition, these periodic solutions are hyperbolic, asymptotically stable when $b>0$, and unstable when $b<0$.
\end{prop}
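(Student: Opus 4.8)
The plan is to use Lemma~\ref{le:1} to pin down the only candidate periodic solution living in a single outer zone and then characterize exactly when that candidate actually stays in its zone. Concretely, for statement~(1), a periodic solution $u^+$ with $u^+(t)\ge 1$ for all $t$ must solve the linear equation $x'=ax+(b-a)+\mu\sin t$ on all of $\mathbb{R}$; by Lemma~\ref{le:1} applied with $p=a$ and $q=b-a$ (recall $a\ne 0$ since $ab<0$), the unique periodic solution of that linear equation is
\[
u^+(t)=-\frac{b-a}{a}-\frac{\mu(\cos t + a\sin t)}{a^2+1}=\frac{a-b}{a}-\frac{\mu(\cos t+a\sin t)}{a^2+1},
\]
which is the formula claimed in \eqref{eq:opumas}. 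So the content of the equivalence is: this explicit function satisfies $u^+(t)\ge 1$ for all $t$ if and only if $|\mu|\le -b\sqrt{a^2+1}/a$.

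The key computation is therefore the minimum over $t$ of $u^+(t)$. Writing $\mu(\cos t+a\sin t)=\mu\sqrt{a^2+1}\,\cos(t-\varphi)$ for a suitable phase $\varphi$, one gets $\min_t u^+(t)=\dfrac{a-b}{a}-\dfrac{|\mu|\sqrt{a^2+1}}{a^2+1}=\dfrac{a-b}{a}-\dfrac{|\mu|}{\sqrt{a^2+1}}$. The condition $\min_t u^+(t)\ge 1$ then reads $\dfrac{a-b}{a}-1\ge \dfrac{|\mu|}{\sqrt{a^2+1}}$, i.e. $-\dfrac{b}{a}\ge \dfrac{|\mu|}{\sqrt{a^2+1}}$, i.e. $|\mu|\le -\dfrac{b}{a}\sqrt{a^2+1}=-\dfrac{b\sqrt{a^2+1}}{a}$. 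One has to check the sign bookkeeping: since $ab<0$, the quantity $-b/a>0$, so the bound is a genuine positive number and the inequality is meaningful; and when $\min_t u^+(t)\ge 1$ holds, $u^+$ stays in the zone $x\ge 1$ for all time, hence it is genuinely a periodic solution of \eqref{eq:main1}. (When equality holds the solution is tangent to $x=1$ but still remains in the closed zone, so it is still a periodic solution of \eqref{eq:main1}; one should remark this is the borderline case.) This proves (1)$\Leftrightarrow$(3).

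For the equivalence with statement~(2), the cleanest route is the odd symmetry already exploited in Section~\ref{sect2}: if $u(t,\tau,x)$ is a solution of \eqref{eq:main1} then so is $-u(t+\pi,\tau,x)$ (Remark/discussion preceding \eqref{eq:semipoinca}). Hence $u^-(t):=-u^+(t+\pi)$ is a periodic solution, it satisfies $u^-(t)\le -1$ iff $u^+(t+\pi)\ge 1$ iff $u^+$ stays in $x\ge 1$, and a direct substitution gives the stated formula $u^-(t)=-u^+(t+\pi)$; this yields (1)$\Leftrightarrow$(2) together with the explicit expression. Uniqueness in each outer zone is exactly the uniqueness clause of Lemma~\ref{le:1}: any periodic solution contained in $x\ge 1$ (resp. $x\le -1$) solves the corresponding linear equation globally, and Lemma~\ref{le:1} says that equation has a single periodic solution. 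Finally, hyperbolicity and stability: by Lemma~\ref{le:1} the Poincaré map of the relevant linear equation has multiplier $e^{2\pi a}$, but one must express stability in terms of $b$ rather than $a$ — this is where Proposition~\ref{prop:der} enters. For a periodic solution staying entirely in an outer zone, $A_{in}(x)$ has measure zero, so by Proposition~\ref{prop:der} the multiplier is $e^{2\pi a}$ and $\sgn(P'(x)-1)=\sgn(a)$; since $ab<0$ we have $\sgn(a)=-\sgn(b)$, hence the limit cycle is asymptotically stable when $a<0$ (equivalently $b>0$) and unstable when $a>0$ (equivalently $b<0$), as claimed. I expect the only genuinely delicate point to be the careful handling of the boundary case $|\mu|=-b\sqrt{a^2+1}/a$ (tangency to $x=\pm1$) and making the sign conventions from $ab<0$ watertight; the rest is substitution into Lemma~\ref{le:1} and an elementary $\min$ over a sinusoid.
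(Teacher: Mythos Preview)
Your proof is correct and follows essentially the same route as the paper: apply Lemma~\ref{le:1} with $p=a$, $q=b-a$ to identify the unique candidate $u^+$, compute its minimum $\dfrac{a-b}{a}-\dfrac{|\mu|}{\sqrt{a^2+1}}$, and translate $\min u^+\ge 1$ into the bound on $|\mu|$; the symmetry $u^-(t)=-u^+(t+\pi)$ and Lemma~\ref{le:1} then give the remaining claims. The only superfluous detour is invoking Proposition~\ref{prop:der} for stability: Lemma~\ref{le:1} already gives the multiplier $e^{2\pi a}$ directly, and $ab<0$ immediately converts the sign of $a$ into the stated condition on $b$.
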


\begin{proof}
Since the equivalence of items (1) and (2) follows by symmetry, it suffices to prove the equivalence of items (1) and (3). 

If equation \eqref{eq:main1} possesses a periodic solution $u^+$ such that $u^+(t) \geqslant  1$ for all $t\in\mathbb{R}$, then the solution $u^+$ is a periodic solution of the linear differential equation $x'= ax+(b-a)+\mu \sin(t)$. Thus, by Lemma \ref{le:1}, the expression for the periodic solution $u^+$ is provided in \eqref{eq:opumas} by substituting $p=a$ and $q=b-a$ in the periodic solution $v$ of \eqref{eq:opeqlinear}. 

A straightforward computation indicates that the global minimum of the function $u^+$ given in \eqref{eq:opumas} is 
\[H:=\frac{a-b}{a}-\frac{|\mu|}{\sqrt{1+a^2}}.\]
Therefore, the function $u^+$ is a periodic solution of \eqref{eq:main1} if and only if $H\geqslant 1$, which is equivalent to the inequality of item (3). The expression for the periodic solution $u^-$ follows by symmetry, and, since $a b < 0$, the conclusion for the hyperbolicity and stability of the periodic solutions is a direct consequence of Lemma \ref{le:1}.
\end{proof}

Next, we study when limit cycles of equation  \eqref{eq:main1} exist in the zone $|x|\leqslant 1$. The proof of the next result is similar to the proof of Proposition \ref{prop:1} and is omitted.

\begin{prop}\label{prop:2}
Consider equation \eqref{eq:main1} with $b\ne0$. There exists a isolated periodic solution $u$ of equation \eqref{eq:main1} such that $|u(t)| \leqslant 1$ for all $t\in\mathbb{R}$ if and only if $|\mu|\leqslant \sqrt{b^2+1}$. In these cases, the periodic solution is the only one in the  zone $|x|\leqslant 1$; specifically,
\[
u(t) = -\frac{\mu (\cos(t)+b\sin(t))}{b^2+1},
\]
which is the symmetric periodic solution $u^s$ given in Proposition \ref{prop:sym}. In addition, this periodic solution is hyperbolic, asymptotically stable when $b<0$ and unstable when $b>0$.
\end{prop}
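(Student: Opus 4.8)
The plan is to mirror the proof of Proposition \ref{prop:1}, but now working in the central strip $|x|\le 1$, where equation \eqref{eq:main1} reduces to the linear equation $x'=bx+\mu\sin t$, since $f(x)=bx$ there. First I would apply Lemma \ref{le:1} with $p=b\ne 0$ and $q=0$: this gives that the linear middle-zone equation has a unique periodic solution,
\[
v(t)=-\frac{\mu(\cos t+b\sin t)}{b^2+1},
\]
which is hyperbolic, asymptotically stable for $b<0$ and unstable for $b>0$. Since $f$ is globally Lipschitz and $v$ solves the middle-zone linear equation, $v$ is a genuine $\mathcal{C}^1$ solution of \eqref{eq:main1} precisely when $|v(t)|\le 1$ for all $t$; conversely, any periodic solution of \eqref{eq:main1} that stays in $|x|\le 1$ must coincide with $v$ by the uniqueness in Lemma \ref{le:1}, which already yields the uniqueness assertion in the strip.

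Next I would compute the amplitude of $v$. Writing $\cos t+b\sin t=\sqrt{b^2+1}\,\cos(t-\varphi)$ for a suitable phase $\varphi$, one gets $\max_t |v(t)|=|\mu|\sqrt{b^2+1}/(b^2+1)=|\mu|/\sqrt{b^2+1}$. Hence $|v(t)|\le 1$ for all $t$ if and only if $|\mu|\le\sqrt{b^2+1}$, which establishes the claimed equivalence.

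Then I would identify $v$ with the symmetric solution $u^s$ of Proposition \ref{prop:sym}. Since $\cos(t+\pi)=-\cos t$ and $\sin(t+\pi)=-\sin t$, we have $v(t+\pi)=-v(t)$, so $Q(v(0))=-u(\pi,0,v(0))=-v(\pi)=v(0)$; that is, $v(0)$ is a fixed point of the Poincar\'e half-map $Q$, and the uniqueness in Proposition \ref{prop:sym} forces $v=u^s$. Finally, for hyperbolicity and stability I would note that when $|v(t)|\le 1$ for all $t$ we have $A_{in}(v(0))=[0,2\pi]$, so Proposition \ref{prop:der} gives $d'(v(0))=e^{2\pi b}-1\ne 0$; hence the periodic solution is hyperbolic (in particular isolated), asymptotically stable when $b<0$ and unstable when $b>0$.

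The only delicate point, and the step I expect to require the most care, is the boundary case $|\mu|=\sqrt{b^2+1}$: there $v$ is tangent to one of the lines $x=\pm 1$, so one must check that $v$ is still a bona fide solution of \eqref{eq:main1} (which holds because $f$ is continuous and $v$ never leaves $[-1,1]$) and that the tangency, occurring at a discrete set of instants of measure zero, does not affect the computation $m(A_{in}(v(0)))=2\pi$, so that hyperbolicity and isolatedness persist. Everything else is a routine adaptation of Proposition \ref{prop:1}, valid for any $b\ne 0$ irrespective of $a$.
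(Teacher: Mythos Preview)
Your proposal is correct and follows exactly the approach the paper indicates: the paper omits the proof, stating only that it ``is similar to the proof of Proposition~\ref{prop:1},'' and your argument is precisely that adaptation---apply Lemma~\ref{le:1} with $p=b$, $q=0$, compute the amplitude $|\mu|/\sqrt{b^2+1}$ to get the range condition, and read off hyperbolicity and stability. Your additional verification that $v=u^s$ via the half-period symmetry and your care with the tangency case $|\mu|=\sqrt{b^2+1}$ are welcome details that the paper leaves implicit.
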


Summarizing, we can state our result for the existence of three limit cycles for equation \eqref{eq:main1}.

\begin{theo}\label{Th:-mu_small}
Suppose that $ab < 0$. Equation~\eqref{eq:main1} has exactly three limit cycles, each of them located in a zone of linearity, if and only if
\[
|\mu| \leq \min\left\{ \frac{-b\sqrt{a^2 + 1}}{a}, \sqrt{b^2 + 1} \right\}.
\]
These three limit cycles are hyperbolic; the those in the zones $x \geq 1$ and $x \leq -1$ are asymptotically stable when $b > 0$ and unstable when $b < 0$, and the limit cycle contained in the region $|x| \leq 1$ is asymptotically stable when $b < 0$ and unstable when $b > 0$.
\end{theo}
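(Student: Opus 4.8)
The proof would split into the two implications, using throughout that by Remark~\ref{rema:signo} we may assume $a<0<b$ (the case $a>0>b$ being symmetric) and that solutions of \eqref{eq:main1} are unique, so distinct solutions never cross. The necessity is immediate: if $|\mu|>\min\{-b\sqrt{a^2+1}/a,\sqrt{b^2+1}\}$, then either $|\mu|>-b\sqrt{a^2+1}/a$, and Proposition~\ref{prop:1} forbids any periodic solution contained in $x\ge1$ or in $x\le-1$, or $|\mu|>\sqrt{b^2+1}$, and Proposition~\ref{prop:2} forbids any periodic solution contained in $|x|\le1$; since each zone of linearity hosts at most one periodic solution, in either case \eqref{eq:main1} has at most two one-zonal limit cycles and cannot have exactly three limit cycles all located in a zone of linearity.

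For the sufficiency, assume $|\mu|\le\min\{-b\sqrt{a^2+1}/a,\sqrt{b^2+1}\}$. Propositions~\ref{prop:1} and~\ref{prop:2} then furnish periodic solutions $u^-(t)\le-1$, $u^s$ with $|u^s(t)|\le1$, and $u^+(t)\ge1$, all hyperbolic with the stabilities asserted in the statement, so \eqref{eq:main1} has at least three limit cycles, one per zone. Setting $\bar x^-=u^-(0)$, $\bar x^s=u^s(0)$, $\bar x^+=u^+(0)$ for the corresponding fixed points of $P$, evaluating the explicit expressions \eqref{eq:opumas} and the one in Proposition~\ref{prop:2} gives $\bar x^-<-1<\bar x^s<1<\bar x^+$. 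As $u^-,u^s,u^+$ are pairwise distinct solutions, they cannot cross, whence $u^-(t)<u^s(t)<u^+(t)$ for all $t$; consequently the solution through any point lying in one of the four intervals into which $\bar x^-,\bar x^s,\bar x^+$ divide $\mathbb{R}$ stays, for all $t$, strictly between the corresponding two of $u^-,u^s,u^+$ (or strictly above $u^+$, resp. below $u^-$, on the two unbounded intervals).

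It then remains to count the zeros of the displacement function $d$. On $(-\infty,\bar x^-]$ and on $[\bar x^+,\infty)$ the solution stays in a single outer zone, so $P$ is affine with slope $e^{2\pi a}<1$; hence $d$ is affine and strictly decreasing there, with only $\bar x^-$ (resp. $\bar x^+$) as a zero. On $(\bar x^-,\bar x^s)$ the solution remains below $u^s(t)\le1$, so $A_{in}(x)=\{t\in[0,2\pi]:u(t,0,x)\ge-1\}$ is a superlevel set of $t\mapsto u(t,0,x)$; these sets are nested increasing in $x$ by the no-crossing property, and $m(A_{in}(x))$ is moreover strictly increasing because the instants at which $u(t,0,x)$ meets $x=-1$ move strictly monotonically with $x$ (implicit function theorem, using $u_x>0$). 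Since $d'(x)=e^{2\pi a+(b-a)m(A_{in}(x))}-1$ by Proposition~\ref{prop:der} and $b-a>0$, the function $d'$ is strictly increasing on $(\bar x^-,\bar x^s)$, passing from $d'(\bar x^-)=e^{2\pi a}-1<0$ (where $m(A_{in})=0$) to $d'(\bar x^s)=e^{2\pi b}-1>0$ (where $m(A_{in})=2\pi$); thus $d$ has a single interior minimum and, vanishing at both endpoints, is strictly negative in between. The interval $(\bar x^s,\bar x^+)$ is symmetric: the solution stays above $u^s(t)\ge-1$, so $A_{in}(x)$ is a sublevel set at level $1$, $m(A_{in}(x))$ is strictly decreasing, $d'$ decreases from $e^{2\pi b}-1>0$ to $e^{2\pi a}-1<0$, and $d>0$ in the interior. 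Hence $d$ vanishes exactly at $\bar x^-,\bar x^s,\bar x^+$, all hyperbolic, so \eqref{eq:main1} has exactly three limit cycles, each one-zonal; the hyperbolicity and stability claims are precisely those of Propositions~\ref{prop:1} and~\ref{prop:2}.

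The main obstacle is upgrading the monotonicity of $m(A_{in}(x))$ on the two inner intervals to \emph{strict} monotonicity: mere monotonicity would leave open the possibility that $d'\equiv0$ on a subinterval, i.e. a continuum of periodic solutions, and ruling this out is exactly what the transversality argument for the crossing instants with $x=\pm1$ is needed for. Some care is also required at the extreme values of $|\mu|$, where $u^s$ or $u^\pm$ become tangent to $x=\pm1$; this is absorbed by the observation that, being distinct solutions of \eqref{eq:main1}, $u^-,u^s,u^+$ remain strictly ordered even there, so the trapping of the remaining orbits between consecutive ones is preserved.
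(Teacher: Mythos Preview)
Your proof is correct and actually more complete than the paper's. The paper gives no explicit proof of this theorem; it simply states it after the words ``Summarizing, we can state our result for the existence of three limit cycles,'' treating it as an immediate consequence of Propositions~\ref{prop:1} and~\ref{prop:2}. Those propositions establish that each of the three linearity zones contains a (unique, hyperbolic) one-zonal periodic solution precisely when the stated inequality holds, which yields the biconditional if one reads ``exactly three limit cycles, each located in a zone of linearity'' as ``exactly three one-zonal limit cycles.'' Your displacement-function argument does more: it shows that under the inequality there are no \emph{other} limit cycles at all, i.e.\ no multi-zonal ones, by trapping every orbit between consecutive members of $u^-,u^s,u^+$ and then analyzing the sign of $d$ on each of the resulting intervals. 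This is a genuine strengthening that the paper leaves implicit.

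One remark on the obstacle you flag at the end: you do not actually need \emph{strict} monotonicity of $m(A_{in}(x))$ on $(\bar x^-,\bar x^s)$. Mere monotonicity already makes $d'$ non-decreasing there, and since $d'$ is continuous (Proposition~\ref{prop:der}) with $d'(\bar x^-)=e^{2\pi a}-1<0$ and $d'(\bar x^s)=e^{2\pi b}-1>0$, the function $d$ strictly decreases on an initial subinterval, is then non-decreasing, and vanishes at both endpoints; hence its interior minimum is strictly negative and $d<0$ throughout the open interval. A hypothetical flat piece $d'\equiv 0$ would sit at a strictly negative value of $d$, so no continuum of periodic solutions can arise. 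The transversality/implicit-function step is therefore dispensable.
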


To conclude this section, we prove the uniqueness of limit cycles for $|\mu|$ sufficiently large, where the size depends on parameters $a$ and $b$. 

\begin{theo}\label{Th:-mu_large} 
Suppose that $ab<0$. 
There exists a value $\bar{\mu}=\bar{\mu}(a,b)> 0$ such that if $|\mu| >\bar\mu$, then equation \eqref{eq:main1} has only one limit cycle $u(t,0,x)$. Moreover, this limit cycle is symmetric, hyperbolic, globally asymptotically stable when $b>0$ and unstable when $b<0$ and the initial condition $x$ satisfies $|x|\leqslant |\mu|(1+1/|a|)$.
\end{theo}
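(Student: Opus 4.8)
The plan is to show that for $|\mu|$ large enough the solutions of \eqref{eq:main1} spend a vanishingly small fraction of each period in the central strip $|x|\le 1$, so that the Poincaré map behaves essentially like that of the linear equation $x'=ax+\text{const}+\mu\sin t$ in the outer zones, which has a unique, hyperbolic fixed point. Concretely, by Remark~\ref{rema:signo} I may assume $\mu>0$ and, say, $b>0>a$ (the other sign choice is symmetric). First I would establish an a priori bound on the location of any periodic orbit: from \eqref{f1}--\eqref{f3} and the explicit integrals in \eqref{eq:sollinear}, together with $P(x)=x$, one gets that a fixed point $\bar x$ of $P$ must satisfy $|\bar x|\le \mu(1+1/|a|)$ (roughly, the forcing amplitude divided by the smallest relevant decay rate); this is the last assertion of the statement and it confines all limit cycles to a compact interval $J_\mu$ whose size grows linearly in $\mu$.

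Next I would prove that on $J_\mu$, for $\mu$ large, the time spent inside $|x|\le 1$ is uniformly small. The key observation is that $x'=f(x)+\mu\sin t$; on the strip the linear part $f(x)=bx$ is bounded by $|b|$, so $x'=\mu\sin t+O(1)$. Hence between two consecutive zeros of $\sin t$ (apart from a short window near those zeros of length $O(\mu^{-1/2})$) the solution moves with speed comparable to $\mu$, so it traverses the strip of width $2$ in time $O(\mu^{-1})$; more carefully, one shows $m(A_{in}(x))\to 0$ uniformly for $x\in J_\mu$ as $\mu\to\infty$, using that any sojourn interval $[s_1,s_2]\subset A_{in}(x)$ with $s_2-s_1$ not near a zero of $\sin$ forces $|x(s_2)-x(s_1)|$ to be large unless $s_2-s_1$ is small. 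From Proposition~\ref{prop:der} this gives $P'(x)=e^{2\pi a+(b-a)m(A_{in}(x))}=e^{2\pi a}\cdot(1+o(1))<1$ uniformly on $J_\mu$ for $\mu$ large, since $a<0$. Thus $d'(x)=P'(x)-1<0$ throughout $J_\mu$, so $d$ is strictly decreasing there; as all zeros of $d$ lie in $J_\mu$, there is at most one, and by Proposition~\ref{prop:sym} there is exactly one, namely the symmetric solution $u^s$. Hyperbolicity and the stated stability ($d'(\bar x)<0$, attracting, when $b>0$) follow immediately from the same sign computation.

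The main obstacle is making the estimate $m(A_{in}(x))\to 0$ genuinely uniform in $x$ over the expanding interval $J_\mu$, since $J_\mu$ itself grows with $\mu$: one must rule out the possibility that, for some $x$, the orbit lingers in the strip during a long subinterval of $[0,2\pi]$ where $\sin t$ happens to be small in mean. I would handle this by splitting $[0,2\pi]$ into the two half-periods on which $\sin t$ has constant sign and, on each, excising a fixed small neighborhood $(\delta,\pi-\delta)$-type subinterval of the zeros; off these neighborhoods $|\sin t|\ge c(\delta)>0$, so $|x'|\ge c(\delta)\mu-|b|$, which for large $\mu$ forces any excursion inside $|x|\le 1$ there to last at most $2/(c(\delta)\mu-|b|)$. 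Summing the excised windows ($O(\delta)$) and the fast-transit bounds ($O(\mu^{-1})$) and then letting first $\mu\to\infty$ and afterwards $\delta\to 0$ yields the uniform bound; the bound on $\bar x$ from the first step guarantees the argument only ever needs to be run for $x$ in a set where the outer-zone dynamics is genuinely contracting, closing the loop.
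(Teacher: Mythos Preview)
Your approach is correct and takes a genuinely different route from the paper. The paper rescales by $X=x/|\mu|$, so that the strip becomes $|X|\le 1/|\mu|$ and the equation tends to the linear one $X'=aX+\sgn(\mu)\sin t$; uniqueness is then obtained by a compactness argument (any hypothetical extra periodic solution would, along a subsequence, converge to the unique hyperbolic cycle $U_l$ of the limit equation, contradicting local uniqueness near $U_l$). You instead bound $m(A_{in}(x))$ directly and deduce that $d'$ has constant sign, which is more elementary and avoids both the rescaling and the compactness step. Two remarks. First, the uniformity obstacle you flag is illusory: your $\delta$-splitting yields $m(A_{in}(x))\le 4\delta+O\big((\mu\sin\delta)^{-1}\big)$ for \emph{every} $x\in\mathbb{R}$, not just $x\in J_\mu$, because on each of $(\delta,\pi-\delta)$ and $(\pi+\delta,2\pi-\delta)$ the field at $x=\pm1$ has the sign of $\mu\sin t$ for large $|\mu|$, so the strip is crossed at most once regardless of the starting point; hence $d$ is strictly monotone on all of $\mathbb{R}$ and Step~1 is unnecessary for uniqueness. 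Second, your justification of Step~1 is too quick, since \eqref{f1}--\eqref{f3} apply only while the orbit stays in a single zone; the bound on the initial condition is most cleanly obtained, as in the paper, by checking that the lines $x=\pm|\mu|(1+1/|a|)$ are one-sided barriers for large $|\mu|$ (the field there has a definite sign), which forces every periodic orbit into the band.
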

 
\begin{proof}
First, note that periodic solutions of equation \eqref{eq:main1} are constant when $\mu=0$. For $\mu\neq 0$, the change of variable $X=x/|\mu|$ transforms equation \eqref{eq:main1} into the equation
\begin{equation}\label{eq:cvmain1}
X'=g(X)+\sgn(\mu)\sin t
\end{equation}
where 
\[
g(X)=
\begin{cases}
a\, X+\frac{a-b}{|\mu|},\quad \text{if }X\leq -\frac{1}{|\mu|},\\
b\, X,\quad \text{if }-\frac{1}{|\mu|}<X < \frac{1}{|\mu|},\\
a\, X+\frac{b-a}{|\mu|},\quad \text{if } X \geq \frac{1}{|\mu|}.
\end{cases}	
\]
Note that the limit differential equation of \eqref{eq:cvmain1} when $|\mu|$ tends to  infinity is the scalar linear differential equation 
\begin{equation}\label{eq:linear_limit}
X'=aX+\sgn(\mu)\sin t.
\end{equation}

By means of Lemma \ref{le:1}, for $a\ne 0$, linear differential equation \eqref{eq:linear_limit} has a unique limit cycle $U_l$ and this limit cycle is hyperbolic, globally asymptotically stable when $a<0$ and unstable when $a>0$. 
Moreover, $U_l$ is symmetric and crosses the straight line $X=0$ transversely. 

As the vector field \eqref{eq:cvmain1} is continuous with respect to $\nu = 1/|\mu|$, the solutions of \eqref{eq:cvmain1} in a neighborhood $V$ of $U_l$ for $t\in[0,2\pi]$ are also transversal to $X=0$ for $t\in[0,2\pi]$. 
Moreover, for every solution~$U$ of equation ~\eqref{eq:cvmain1} contained in $V$ for $t\in[0,2\pi]$,
\[\lim_{\nu\to 0}m(A_{in}(\nu,U))=0,\] 
where $A_{in}(\nu,U)=\{t\in[0,2\pi]\colon |U(t)|\leq \nu\}$. As the change of variables preserves the parametrization of time, Proposition~\ref{prop:der} and the hyperbolicity of $U_l$ implies the existence of $\tilde \mu=\mu(a,b)>0$ such that equation \eqref{eq:cvmain1} has a unique limit cycle in $V$ for $|\mu|>\tilde \mu$. 

In addition, this limit cycle is symmetric, hyperbolic, and crosses transversely the separation straight lines $X=-1/|\mu|$ and $X=1/|\mu|$. Moreover, this limit cycle has the same stability as the limit cycle $U_l$ and, since $ab<0$, it is globally asymptotically stable when $b>0$ and unstable when $b<0$.

The vector field defined by equation \eqref{eq:linear_limit} in the lines $X=\pm(1+1/|a|)$ satisfies
\[
\begin{array}{l}
aX'|_{1+1/|a|}=a(a+\sgn(a)+\sgn(\mu)\sin t)>0 \quad \mbox{for all} \quad t\in\mathbb{R},
\\
\noalign{\medskip}
aX'|_{-(1+1/|a|)}=-a(a+\sgn(a)-\sgn(\mu)\sin t)<0 \quad \mbox{for all} \quad t\in\mathbb{R},
\end{array}
\]
which means that the band $|X|\leqslant 1+1/|a|$ is a positively (resp., negatively) invariant region for equation \eqref{eq:linear_limit} when $a<0$ (resp. $a>0$). Thus, there exists a value $\hat \mu=\hat\mu(a,b)>0$ such that the periodic solutions of equation \eqref{eq:cvmain1} are bounded in the band $|X|\leqslant 1+1/|a|$ for $|\mu|>\hat \mu$. 

Now, if for each $n\in\mathbb{N},n>\max\{\tilde \mu, \hat \mu\}$, equation \eqref{eq:cvmain1} with $|\mu|=n$ has a periodic solution $U_n$ other than the symmetric periodic solution (see Proposition \ref{prop:sym}), then, by compacity, there exists a convergent subsequence $\{U_{n_k}\}$ and, by means of continuity with respect to the parameters and initial conditions, this subsequence converges to $U_l$, the unique periodic solution of equation \eqref{eq:linear_limit}. Therefore, for $|\mu|=n_k$ sufficiently large, equation \eqref{eq:cvmain1} has two periodic solutions in a neighborhood of $U_l$ and this is impossible based on the hyperbolicity of the solution $U_l$. 

Therefore, there exists a value $\bar{\mu}=\bar{\mu}(a,b)>\max\{\tilde \mu, \hat \mu\}$ such that if $|\mu| >\bar\mu$, then equation \eqref{eq:main1} has only one limit cycle $u(t,0,x)$. The hyperbolicity and stability of this limit cycle are straightforward and the bound of $|x|$ follows from the invariance of the band $|x/\mu|=|X| \leq 1+1/|a|$.
\end{proof}

\section{First order perturbation}\label{Sect:Melnikov}

Define $f(x)$ as in \eqref{eq:f} and consider, for $|\epsilon|$ is sufficiently small, the differential equation 
\[
x'=\epsilon f(x)+\mu \sin t.\tag{\ref{eq:ef}}
\]
In this section, we compute the first order Melnikov function or averaging function for the deformation~\eqref{eq:ef} and prove the following result.

\begin{theo}\label{theo:Melnikov}
Let $a$ and $b$ be fixed such that $a b<0$. Let us define the values
\begin{equation}
\label{ecu:defmu1yc}
\mu_1 := \frac{c}{\sin(c)}\, , \qquad \mu_2:=\frac{1}{\cos(c/2)}\, ,\qquad \mbox{with} \qquad c:=\frac{ \pi b}{b-a}\,.
\end{equation}
Then $0 < \mu_1 < \mu_2$ and, for each $\mu\in\mathbb{R}$, there exists a value $\varepsilon_0(\mu)>0$ such that 
\begin{enumerate}
    \item if $0\leq |\mu|<\mu_2$, then equation \eqref{eq:ef} has exactly three limit cycles,
    \item if $\mu_2< |\mu|<\mu_1$, then equation \eqref{eq:ef} has exactly five limit cycles,
    \item if $|\mu|>\mu_1$, then equation \eqref{eq:ef} has exactly one limit cycle,
\end{enumerate}
whenever $|\varepsilon|<\varepsilon_0(\mu)$.
\end{theo}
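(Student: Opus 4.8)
The plan is to apply first-order averaging (equivalently Melnikov theory) to the deformation \eqref{eq:ef}, viewing $\mu\sin t$ as the unperturbed part and $\epsilon f(x)$ as the perturbation. When $\epsilon=0$ the equation is $x'=\mu\sin t$, whose general solution is $x(t)=x_0+\mu(1-\cos t)$; every orbit is $2\pi$-periodic, so we are perturbing a global center. The averaged (first-order Melnikov) function is therefore
\[
M(x_0)=\int_0^{2\pi} f\bigl(x_0+\mu(1-\cos t)\bigr)\,dt,
\]
and by the classical averaging theorem (see e.g. the references to Melnikov/averaging in the paper) each simple zero of $M$ gives rise, for $|\epsilon|$ small, to a unique hyperbolic limit cycle of \eqref{eq:ef} near the corresponding unperturbed orbit, while the total number of limit cycles equals the number of such zeros. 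So the whole theorem reduces to a careful count of the zeros of $M$ and a verification that, away from the bifurcation values $\mu=\mu_1,\mu_2$, all zeros are simple.

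The key computational step is to evaluate $M(x_0)$ explicitly using the piecewise description \eqref{eq:f}. Writing $\varphi(t)=x_0+\mu(1-\cos t)$, the three linear branches of $f$ are activated on the $t$-intervals where $\varphi(t)<-1$, $|\varphi(t)|<1$, $\varphi(t)>1$; these are determined by solving $\varphi(t)=\pm 1$, i.e. $\cos t = 1-(\mp1 - x_0)/\mu$ — arccosines of affine functions of $x_0$. On each piece $f$ is affine in $\varphi$, hence affine in $\cos t$, so the integral is elementary (producing the arccosines and square-root terms $\sqrt{\mu^2-(x_0\pm1)^2}$ one expects). Because $f$ is odd and the perturbation respects the symmetry $t\mapsto t+\pi$, $x\mapsto -x$ noted in Remark~\ref{rema:signo}, the function $M$ will be odd about a suitable center, which cuts the casework roughly in half; in particular $M$ always has the zero corresponding to the symmetric periodic solution $u^s$ of Proposition~\ref{prop:sym}. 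After normalizing (one may scale so that, say, $b-a$ is fixed, using Remark~\ref{rema:signo} to fix signs), $M$ becomes a function of the single essential parameter $c=\pi b/(b-a)\in(0,\pi)$ and of $\mu$, and the zero set in the $(\mu,x_0)$-plane is exactly the curve sketched in Figure~\ref{fig:zeroset}.

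The heart of the proof is then the qualitative analysis of this zero set: one must show that for $|\mu|$ small there are exactly three branches (matching Theorem~\ref{Th:-mu_small}), that at $|\mu|=\mu_2=1/\cos(c/2)$ a saddle-node pair of zeros is born, producing five zeros on $\mu_2<|\mu|<\mu_1$, that at $|\mu|=\mu_1=c/\sin c$ two of them annihilate against the remaining branches, leaving a single zero for $|\mu|>\mu_1$ (matching Theorem~\ref{Th:-mu_large}), and that $0<\mu_1<\mu_2$ always holds — the latter being a short calculus inequality in $c\in(0,\pi)$. Concretely this means locating the folds of the curve $M(x_0;\mu)=0$, i.e. the points where $M=M_{x_0}=0$, and checking that $\mu_1,\mu_2$ are precisely the $\mu$-coordinates of these folds; the values in \eqref{ecu:defmu1yc} should drop out of setting the relevant square-root/arccosine expressions to their degenerate configurations (a double root occurring when an orbit becomes tangent to a line $x=\pm1$, or when two one-zonal orbits merge). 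The main obstacle I anticipate is precisely this bifurcation bookkeeping: $M$ is only piecewise analytic, with breakpoints in $x_0$ where $\varphi$ becomes tangent to $x=\pm 1$, so one has to track zeros across several regimes of $\mu$ while controlling the sign and monotonicity of $M$ on each analytic piece, and confirm no extra zeros hide on the non-symmetric branches; establishing simplicity of all zeros away from $\mu\in\{\mu_1,\mu_2\}$ (so that averaging gives exactly, not just at least, the stated counts) is the delicate part. Finally, since $\varepsilon_0(\mu)$ is allowed to depend on $\mu$, no uniformity in $\mu$ is needed and the conclusion follows by applying the averaging theorem separately in each of the three open $\mu$-ranges.
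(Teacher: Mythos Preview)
Your overall strategy matches the paper's: set up the first-order Melnikov function, exploit the odd/even symmetries, and characterize its zero set in the $(x_0,\mu)$-plane. Two concrete points, however, separate your outline from a working proof.

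First, the mechanism you guess for the bifurcation values is wrong. Neither $\mu_1$ nor $\mu_2$ corresponds to an unperturbed orbit becoming tangent to $x=\pm 1$ or to one-zonal orbits merging. After the convenient shift $M(x,\mu)=\int_0^{2\pi}f(x-\mu\cos t)\,dt$, the paper computes $M_x(x,\mu)=2\pi a-(a-b)\,m(A_{in}(x,\mu))$ with $A_{in}=\{t:|x-\mu\cos t|\le 1\}$; a fold $M=M_x=0$ therefore occurs precisely when $m(A_{in})=2\pi a/(a-b)=2(\pi-c)$, a condition on \emph{time in the central zone}, not on tangency. The orbit realizing $\mu_1$ is strictly two-zonal with no tangency (Lemma~\ref{lema:v1}), and $\mu_2$ is where the fold lands on the symmetric branch $x=0$ with a three-zonal orbit. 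The only tangency in the picture occurs at $\mu=-b/a$, which is merely where the one-zonal segment $\{x=1-b/a\}$ joins the rest of the zero set and is not a bifurcation in the limit-cycle count. Your square-root expressions are nonzero at $\mu_1$ and $\mu_2$, so hunting for their degeneration would not locate the folds.

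Second, and more fundamentally, you are missing the structural lemma that makes the ``bookkeeping'' you worry about tractable. The paper establishes (Proposition~\ref{prop:tildeM}, item~\eqref{itemdermuproptildeM}) that $M_\mu(x,\mu)\ne 0$ whenever $x\ne 0$ and the orbit is not one-zonal. This monotonicity in $\mu$ forces the zero set of $M$ in $\{x>0,\mu>0\}$ to be the graph of a \emph{single} function $\mu=\varphi(x)$ over $(0,1-b/a)$ (together with the one-zonal vertical segment), so no extra zeros can hide anywhere. One then only needs to show that $\varphi$ has a unique critical point, which the paper does cleanly via the Euler-type identity $M=xM_x+\mu M_\mu-(a-b)\bigl(m(A^+)-m(A^-)\bigr)$ (equation~\eqref{ecu:Mtildeversion}) rather than through explicit arccosine formulas. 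Without the monotonicity in $\mu$ you have no a priori control on the number of components of $\{M=0\}$, and the piecewise-analytic formula for $M$ alone does not hand it to you.
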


\subsection{Melnikov function: general properties.} Let us write $u(t,x,\mu,\epsilon)$ for the solution of equation \eqref{eq:ef} such that $u(0,x,\mu,\epsilon)=x$. 

For $\epsilon = 0$,  equation \eqref{eq:ef} has a  global center, since \[u(t,x,\mu,0) = \mu(1-\cos t) + x,\] for every $x \in \mathbb{R}$. 

The change of variable $x\to \mu(1-\cos t)+x $ transforms the differential equation \eqref{eq:ef} into the form
\[
x'=\varepsilon f(\mu(1-\cos t)+x),
\]
and, therefore, the study of the periodic solutions of equation \eqref{eq:ef} for $|\varepsilon|$ sufficiently small can be carried out by means of the averaging theory (or Menilkov theory) recently developed for non--smooth systems in works as \cite{Carmona201344,Llibre20154007,Llibre20171,Llibre2014563,Novaes20241486}, and also applied to Abel equation by Neto~\cite{Neto1980}. Thus, for instance, from Theorem A in \cite{Llibre20154007}, one can deduce that the zeros of the first-order averaging function  
\begin{equation}
\label{ecu:Melnikovfun}
\mathcal{M}(x,\mu):=\int_0^{2\pi} f(\mu(1-\cos t)+x)\,dt
\end{equation}
with non--zero Brouwer degree provide limit cycles of equation \eqref{eq:ef} for $|\varepsilon|$ sufficiently small. Notice that the derivative with respect to $\varepsilon$ of the solution $u(t,x,\mu,\varepsilon)$ for $t=2\pi$ and $\varepsilon=0$ coincides with the averaging function $\mathcal{M}$ defined in \eqref{ecu:Melnikovfun}. That is,
\[
u_{\varepsilon}(2\pi,x,\mu,0) = \int_0^{2\pi} f(\mu(1-\cos t)+x)\,dt.
\]
For this reason, function $\mathcal{M}$ is also called the first order Melnikov function (see, for instance,  \cite{Carmona201344,Novaes20241486,Blows1994341}). 

From now on, the main objective will be the proof of Theorem \ref{theo:Melnikov}, which will rely primarily on the analysis of the Melnikov function $\mathcal{M}$, and more specifically, on the precise description of its zero set. To that end, it is appropriate to first perform a translation that allows us to work with a symmetric function that inherits the symmetry of equation \eqref{eq:ef} (see Remark \ref{rema:signo}). Thus, we define the function
\begin{equation}
\label{ecu:Melnikovfuntilde}
 M (x,\mu)=\mathcal{M}(x-\mu,\mu)=\int_0^{2\pi} f(x-\mu\cos t)\,dt.
\end{equation}

Now, we will show that the function $M$ is continuously differentiable (see Proposition \ref{prop:der} and its proof) and provide expressions for the derivatives of 
$ M$ with respect to $x$ and with respect to $\mu$.

\begin{prop}
\label{prop:dertildeM}
Function $ M$ defined in \eqref{ecu:Melnikovfuntilde} is continuously differentiable in $\mathbb{R}^2$. Moreover, the derivatives of $ M$ with respect to $x$ and with respect to $\mu$ are given, respectively, by 
\begin{equation}
\label{ecu:DerMtildex}
 M_x(x,\mu)=2 \pi a - (a-b)  m\left( A_{in}(x,\mu)\right) 
\end{equation}
and 
\begin{equation}
\label{ecu:DerMtildemu}
 M_\mu(x,\mu)=(a-b)\int_{ A_{in}(x,\mu)}\cos t \, dt,
\end{equation}
where 
\begin{equation}
\label{ecu:Atilde}
 A_{in}(x,\mu)=\left\{ t\in[0,2\pi]:|x-\mu\cos t|\leqslant 1\right\} 
\end{equation}
and $m(\cdot)$ denotes, as usual, the measure of a measurable set in $\mathbb{R}$.
\end{prop}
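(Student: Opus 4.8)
The plan is to establish differentiability of $M$ first, and then obtain the two derivative formulas by differentiating under the integral sign and reducing everything to integrals of $f'$ and of the $\mu$-dependence of the integrand. Since $f$ is only piecewise linear, I cannot differentiate naively; instead I will argue exactly as in the proof of Proposition \ref{prop:der}. Write $M(x,\mu)=\int_0^{2\pi} f(x-\mu\cos t)\,dt$ and note that the integrand $G(t,x,\mu):=f(x-\mu\cos t)$ is continuous in all three variables and globally Lipschitz in $(x,\mu)$ because $f$ is globally Lipschitz with constant $\max\{|a|,|b|\}$; hence $M$ is continuous. For the $\mathcal{C}^1$ claim I would invoke the same Lipschitz/absolute-continuity machinery referenced in Proposition \ref{prop:der} (Proposition 4.6 of \cite{Carmona200471} / Theorem A.1 of \cite{BravoFernandezTineo07}): away from the ``tangency'' set where $x\mp\mu\cos t=\pm 1$ on a positive-measure set of $t$, the function $f$ is differentiable along the relevant curve and one may differentiate under the integral; the set of exceptional $(x,\mu)$ is negligible, and a limiting argument gives continuity of the partial derivatives everywhere.

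Granting differentiation under the integral, the key computation is
\[
M_x(x,\mu)=\int_0^{2\pi} f'(x-\mu\cos t)\,dt,\qquad M_\mu(x,\mu)=-\int_0^{2\pi} f'(x-\mu\cos t)\cos t\,dt,
\]
where $f'$ is defined a.e.\ and equals $b$ when $|x-\mu\cos t|<1$ and $a$ when $|x-\mu\cos t|>1$ (the boundary set is $t$-null for the exceptional $(x,\mu)$ we have excluded, and a null set does not affect the integral). Thus, splitting $[0,2\pi]$ into $A_{in}(x,\mu)$ and its complement,
\[
M_x(x,\mu)=b\, m(A_{in}(x,\mu))+a\big(2\pi-m(A_{in}(x,\mu))\big)=2\pi a-(a-b)\,m(A_{in}(x,\mu)),
\]
which is exactly \eqref{ecu:DerMtildex}. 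For the $\mu$-derivative,
\[
M_\mu(x,\mu)=-b\int_{A_{in}(x,\mu)}\cos t\,dt-a\int_{[0,2\pi]\setminus A_{in}(x,\mu)}\cos t\,dt,
\]
and since $\int_0^{2\pi}\cos t\,dt=0$ the second integral equals $-\int_{A_{in}(x,\mu)}\cos t\,dt$, so the two terms combine to
$(a-b)\int_{A_{in}(x,\mu)}\cos t\,dt$, which is \eqref{ecu:DerMtildemu}.

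The main obstacle is rigorously justifying the interchange of differentiation and integration at the exceptional parameter values, where the integrand fails to be differentiable in $x$ (or $\mu$) on a set of times of positive measure — i.e.\ those $(x,\mu)$ for which $x-\mu\cos t\equiv 1$ or $\equiv -1$ holds only at isolated $t$ but the derivative of $t\mapsto m(A_{in}(x,\mu))$ could jump. The clean way around this is to prove the formula \eqref{ecu:DerMtildex}–\eqref{ecu:DerMtildemu} first on the open dense full-measure set of ``generic'' $(x,\mu)$ (where the curves $t\mapsto x-\mu\cos t$ cross $\pm 1$ transversally, so $m(A_{in})$ and $\int_{A_{in}}\cos t\,dt$ are locally real-analytic), then check that the right-hand sides of \eqref{ecu:DerMtildex} and \eqref{ecu:DerMtildemu} are continuous functions of $(x,\mu)$ on all of $\mathbb{R}^2$ — continuity of $m(A_{in}(x,\mu))$ follows because the level sets $\{t:|x-\mu\cos t|=1\}$ have measure zero for every $(x,\mu)\neq(0,0)$ and one controls the symmetric difference of the sets $A_{in}$ under small perturbations — and finally use the standard fact that a continuous function whose derivative (in the distributional sense, or as an a.e.\ pointwise derivative) is continuous is genuinely $\mathcal{C}^1$ with that derivative. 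Alternatively, one notes that $M$ is manifestly Lipschitz, hence differentiable a.e., and the a.e.\ derivative is given by the displayed integrals, which are continuous; a Lipschitz function with continuous a.e.\ gradient is $\mathcal{C}^1$. I would present the argument in this order: (i) continuity and Lipschitz property of $M$; (ii) the derivative formulas on the generic set via differentiation under the integral; (iii) continuity of the right-hand sides and the upgrade to a global $\mathcal{C}^1$ statement.
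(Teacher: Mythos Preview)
Your proposal is correct and follows essentially the same approach as the paper: differentiate under the integral sign to get $M_x=\int_0^{2\pi}f'(x-\mu\cos t)\,dt$ and $M_\mu=-\int_0^{2\pi}f'(x-\mu\cos t)\cos t\,dt$, split $[0,2\pi]$ into $A_{in}$ and its complement, and use $\int_0^{2\pi}\cos t\,dt=0$ for the $\mu$-derivative. The only difference is that the paper dispatches the $\mathcal{C}^1$ claim in a single sentence (``since $f$ is continuous and piecewise linear, it is clear that $M$ is continuously differentiable''), whereas you supply the careful Lipschitz/a.e.-derivative/continuity-of-the-formula argument that actually underlies this assertion.
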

\begin{proof}
Since function $f$ defined in \eqref{eq:f} is continuous and piecewise linear, it is clear that function $ M$ is continuously differentiable in $\mathbb{R}^2$. The derivative of  $ M$ with respect to $x$ is 
\[
\begin{split}
 M_x(x,\mu)& = \int_0^{2\pi}  f'(x-\mu\cos t)\,dt=\int_{ A_{in}(x,\mu)} b\,dt
+ \int_{[0,2\pi]\setminus  A_{in}(x,\mu)} a\,dt\\
& = b\, m\left( A_{in}(x,\mu)\right) + a\, (2 \pi - m\left( A_{in}(x,\mu)\right)=2 \pi a - (a-b)  m\left( A_{in}(x,\mu)\right),
\end{split}
\]
where the set $ A_{in}(x,\mu)$ is defined in \eqref{ecu:Atilde}.

The derivative of $ M$ with respect to $\mu$ is 
\[
\begin{split}
 M_\mu(x,\mu)& = \int_0^{2\pi}  -f'(x-\mu\cos t)\cos t\,dt= \int_{ A_{in}(x,\mu)} -b\cos t\,dt
+ \int_{[0,2\pi]\setminus  A_{in}(x,\mu)} -a\cos t\,dt\\ &
=(a-b)\int_{ A_{in}(x,\mu)} \cos t\,dt,
\end{split}
\]
because 
\begin{equation}
\label{eq:intcons02pi}
0=\int_0^{2\pi}\cos t=\int_{ A_{in}(x,\mu)} \cos t\,dt+\int_{[0,2\pi]\setminus A_{in}(x,\mu)} \cos t\,dt.
\end{equation}
\end{proof}
\begin{rema}
The expressions for the partial derivatives of $ M$ given in Proposition \ref{prop:dertildeM} allow us to obtain a new expression for $ M$ depending on $ M_x$ and $ M_\mu$. Indeed, by using the expression for function $f$ given in \eqref{eq:f}, it follows that
\[
\begin{array}{ccl}
 M(x,\mu)&=&{\displaystyle \int_0^{2\pi} f(x-\mu\cos t)\,dt=\int_{A^+(x,\mu)} (a(x-\mu\cos t)+(b-a)) dt}+ \\
\noalign{\medskip}
& & {\displaystyle \int_{ A_{in}(x,\mu)} b(x-\mu\cos t) dt}+ {\displaystyle \int_{A^-(x,\mu)} (a(x-\mu\cos t)+(a-b)) dt}, \\
\end{array}
\]
where $ A_{in}(x,\mu)$ is given in \eqref{ecu:Atilde}, 
\begin{equation}
\label{eq: A+-}
\begin{array}{lcl}
A^+(x,\mu)& =& \{ t\in[0,2\pi]: x-\mu\cos t \geqslant 1\} \qquad \mbox{and}  \\
\noalign{\medskip}
A^-(x,\mu)& = & \{ t\in[0,2\pi]: x-\mu\cos t \leqslant -1\}.
\end{array}
\end{equation}
Hence, since $A^+(x,\mu)\cup A^-(x,\mu)=[0,2\pi]\setminus A_{in}(x,\mu) $, one gets
\[
\begin{array}{ccl}
 M(x,\mu)&=&{\displaystyle \int_{[0,2\pi]\setminus A_{in}(x,\mu)} ax \, dt -\mu\left(\int_{[0,2\pi]\setminus A_{in}(x,\mu)} a\cos t \,dt+ \int_{ A_{in}(x,\mu)} b\cos t \,dt \right)} +\\
\noalign{\medskip}
& & {\displaystyle \int_{ A_{in}(x,\mu)} bx \, dt+ \int_{A^+(x,\mu)} (b-a) \, dt+ \int_{A^-(x,\mu)} (a-b)\,  dt }\\
\end{array}
\]
and, from expressions \eqref{ecu:DerMtildex}, \eqref{ecu:DerMtildemu} and \eqref{eq:intcons02pi}, it has 
\begin{equation}
\label{ecu:Mtildeversion}
 M(x,\mu)=x M_x(x,\mu)+\mu M_\mu(x,\mu)-(a-b)\left(m\left(A^+(x,\mu) \right)- m\left(A^-(x,\mu) \right)\right).
\end{equation}
\end{rema}

\begin{rema}
\label{rema:derMmu}
The following observations are made with regard to the derivative with respect to $\mu$  of function $ M$.

Let us consider the periodic function $v(t)=x-\mu\cos t$ and the set $ A_{in}(x,\mu)$ defined in \eqref{ecu:Atilde}. Function $v$ is one--zonal if and only if $ A_{in}(x,\mu)= \emptyset$ or $ A_{in}(x,\mu)= [0,2\pi]$. Thus,  from expression for $M_\mu$ given in \eqref{ecu:DerMtildemu}, one has $ M_\mu(x,\mu)=0$ when $v$ is one--zonal.

If $v$ is two--zonal, then $\emptyset\ne A_{in}(x,\mu)\varsubsetneq [0,2\pi]$ and the equation $|v(t)|=1$ has exactly two solutions $t_1=t_1(x,\mu)$ and $t_2=t_2(x,\mu)=2\pi-t_1(x,\mu)$ in the interval $(0,2\pi)$, with $t_1\in(0,\pi)$. Thus, it is immediate to observe that $ A_{in}(x,\mu)=[t_1,2\pi-t_1]$ or $[0,2\pi]\setminus  A_{in}(x,\mu)=[t_1,2\pi-t_1]$.  If $ A_{in}(x,\mu)=[t_1,2\pi-t_1]$, from expression \eqref{ecu:DerMtildemu}, follows 
\begin{equation}
\label{eq:dertildeMmu2z}
 M_\mu(x,\mu)=(a-b)\int_{ A_{in}(x,\mu)}\cos t \, dt=(a-b)\int_{t_1}^{2\pi-t_1}\cos t \, dt=-2(a-b)\sin t_1\ne 0.
\end{equation}
When $[0,2\pi]\setminus  A_{in}(x,\mu)=[t_1,2\pi-t_1]$, from expressions \eqref{ecu:DerMtildemu}  and \eqref{eq:intcons02pi}, it is deduced
\begin{equation}
\label{eq:dertildeMmu2zb}
 M_\mu(x,\mu)=(a-b)\int_{ A_{in}(x,\mu)}\cos t \, dt=-(a-b)\int_{t_1}^{2\pi-t_1}\cos t \, dt=2(a-b)\sin t_1\ne 0.
\end{equation}

If $v$ is three--zonal, then $\emptyset\ne A_{in}(x,\mu)\varsubsetneq [0,2\pi]$ and the equation $|v(t)|=1$ has exactly four solutions in the interval $(0,2\pi)$. Namely, $\hat t_1=t_1(x,\mu),\hat t_2=t_2(x,\mu)$, $\hat t_3=2\pi-\hat t_2$, and $\hat t_4=2\pi-t_1$ (with $0<\hat t_1<\hat t_2<\pi$), where 
\begin{equation}
\label{eq:t1t23z}
|x-\mu\cos \hat t_1|=|x-\mu\cos \hat t_2|=1 \quad \mbox{and}\quad  (x-\mu\cos \hat t_1)(x-\mu\cos \hat t_2)=-1.
\end{equation}
Hence, $ A_{in}(x,\mu)=[\hat t_1,\hat t_2]\cup [2\pi-\hat t_2,2\pi-\hat t_1]$ and 
\begin{equation}
\label{eq:dertildeMmu3z}
 M_\mu(x,\mu)=(a-b)\int_{ A_{in}(x,\mu)}\cos t \, dt=2(a-b)\left( \sin \hat t_2- \sin \hat t_1 \right).
\end{equation}

\end{rema}

Taking into account that, from Proposition \ref{prop:dertildeM}, the function $ M$ is continuously differentiable, we may replace the Brouwer degree in Theorem A of \cite{Llibre20171} with the derivative of 
$ M$ with respect to x, and state the following result concerning the existence of limit cycles of equation \eqref{eq:ef} for $|\varepsilon|$ sufficiently small (see also Theorem 1.2  of \cite{Blows1994341}). Observe that Melnikov function $\mathcal{M}$ defined in \eqref{ecu:Melnikovfun} is also continuously differentiable and 
$
\mathcal{M}_x(x,\mu)= M_x(x+\mu,\mu).
$

\begin{prop}
\label{prop:existclMel}
Suppose that the parameters $a$ and $b$ are fixed with $ab<0$ and let us consider function $ M$ defined in \eqref{ecu:Melnikovfuntilde}. If there exists $(x_0,\mu_0)\in \mathbb{R}^2$ such that $ M(x_0,\mu_0)=0$ and $ M_x(x_0,\mu_0)\ne 0$, then exists $\tilde\varepsilon_0(x_0,\mu_0)>0$ such that equation \eqref{eq:ef} for $\mu=\mu_0$ and $|\varepsilon|<\tilde\varepsilon_0(x_0,\mu_0)$ has a unique limit cycle in a neighborhood of the periodic solution $u(t,x_0,\mu_0,0) = \mu_0(1-\cos t) + x_0$. Moreover, this limit cycle converges to the periodic solution $u(t,x_0,\mu_0,0)$  as $\varepsilon$ goes to zero.
\end{prop}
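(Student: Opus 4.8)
The plan is to read the statement as a local consequence of the averaging (Melnikov) theorem, in the $\mathcal{C}^1$ form prepared just above, but carried out through the implicit function theorem so that local uniqueness and convergence come for free. Fix $\mu=\mu_0$ and let $d(x,\varepsilon):=u(2\pi,x,\mu_0,\varepsilon)-x$ be the displacement function of \eqref{eq:ef}. Two observations start the argument: (a) $d(x,0)\equiv 0$, since $u(t,x,\mu_0,0)=\mu_0(1-\cos t)+x$ is $2\pi$-periodic; and (b) differentiating the solution in $\varepsilon$ along this unperturbed orbit (the computation recorded right after \eqref{ecu:Melnikovfun}) gives $d_\varepsilon(x,0)=u_\varepsilon(2\pi,x,\mu_0,0)=\int_0^{2\pi} f(\mu_0(1-\cos t)+x)\,dt=\mathcal{M}(x,\mu_0)=M(x+\mu_0,\mu_0)$, the last equality being the translation built into \eqref{ecu:Melnikovfuntilde}. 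Hence the hypothesis ``$M(x_0,\mu_0)=0$ and $M_x(x_0,\mu_0)\neq 0$'' says exactly that $x^\ast:=x_0-\mu_0$ is a simple zero of $x\mapsto d_\varepsilon(x,0)$.

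Next I would run the standard Lyapunov--Schmidt / implicit-function step. Since $d$ is of class $\mathcal{C}^1$ jointly in $(x,\varepsilon)$ near $\varepsilon=0$ (see the last paragraph) and $d(\cdot,0)\equiv 0$, write $d(x,\varepsilon)=\varepsilon\,G(x,\varepsilon)$ with $G(x,\varepsilon):=\int_0^1 d_\varepsilon(x,s\varepsilon)\,ds$; then $G$ is continuous, $\mathcal{C}^1$ in $x$, and $G(x,0)=M(x+\mu_0,\mu_0)$, $G_x(x,0)=M_x(x+\mu_0,\mu_0)$. At $(x^\ast,0)$ one has $G=0$ and $G_x\neq 0$, so the implicit function theorem yields $\tilde\varepsilon_0(x_0,\mu_0)>0$, a neighborhood $U$ of $x^\ast$, and a $\mathcal{C}^1$ branch $\varepsilon\mapsto x(\varepsilon)$ with $x(0)=x^\ast$ that is the unique zero of $G(\cdot,\varepsilon)$ in $U$ for $|\varepsilon|<\tilde\varepsilon_0$. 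For $0<|\varepsilon|<\tilde\varepsilon_0$, $G(x(\varepsilon),\varepsilon)=0$ is equivalent to $d(x(\varepsilon),\varepsilon)=0$, so $x(\varepsilon)$ is a fixed point of the Poincaré map of \eqref{eq:ef} and hence gives a periodic solution; moreover $d_x(x(\varepsilon),\varepsilon)=\varepsilon\,G_x(x(\varepsilon),\varepsilon)\neq 0$ by continuity, so this periodic solution is hyperbolic, that is, a genuine limit cycle, and it is the only one with initial condition in $U$. Finally $x(\varepsilon)\to x^\ast$ as $\varepsilon\to 0$, and continuous dependence on initial data and parameters gives that the limit cycle converges to the unperturbed periodic solution $u(t,x^\ast,\mu_0,0)$ singled out by the zero $(x_0,\mu_0)$, which is the assertion of the proposition. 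Alternatively, one may quote Theorem A of \cite{Llibre20171} directly: the hypothesis makes the corresponding zero of the averaging function simple, hence of Brouwer degree $\pm 1\neq 0$, which already produces the limit cycle, and the hyperbolicity computation above upgrades existence to local uniqueness and convergence.

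The only ingredient that is not completely routine, and thus the main obstacle, is the joint $\mathcal{C}^1$ regularity of $d$, equivalently of the Poincaré map of \eqref{eq:ef}, in $(x,\varepsilon)$ on a neighborhood of $\varepsilon=0$: the classical smooth-dependence theorems do not apply verbatim because $f$ is only piecewise linear and the relevant orbits cross the switching lines $x=\pm 1$. However, this is precisely the type of statement established for piecewise linear equations in Proposition 4.6 of \cite{Carmona200471} (see also Theorem A.1 of \cite{BravoFernandezTineo07} and Proposition \ref{prop:der}, adapted to \eqref{eq:ef}): away from the finitely many initial data whose orbit is tangent to a switching line the flow crosses it transversally and depends $\mathcal{C}^1$ on $(x,\varepsilon)$, and one checks that the return map remains $\mathcal{C}^1$ even at those exceptional data; together with the continuity of $M_x$ and $M_\mu$ from Proposition \ref{prop:dertildeM}, this provides the regularity of $G$ required by the implicit function theorem on a full neighborhood of $(x^\ast,0)$. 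Everything else reduces to a textbook application of the implicit function theorem.
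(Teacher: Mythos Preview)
Your proposal is correct and aligns with the paper's treatment: the paper does not give a separate proof but simply invokes Theorem~A of \cite{Llibre20171} (and Theorem~1.2 of \cite{Blows1994341}), noting that since $M$ is $\mathcal{C}^1$ a simple zero has nonzero Brouwer degree. You spell out explicitly the underlying implicit-function-theorem argument (writing $d=\varepsilon G$ and solving $G=0$) and correctly flag the joint $\mathcal{C}^1$ regularity of the displacement as the only nontrivial point, citing the same sources the paper relies on; you even mention the direct citation of Theorem~A as an alternative, which is precisely the route the paper takes.
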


From Proposition \ref{prop:existclMel}, the number of simple zeros of the function $ M$ determines a lower-bound of the number of limit cycles of \eqref{eq:ef}.

\subsection{Characterization of zero set of the function $ M$}
Some basic properties of the function $M$ are collected in the next result. In particular, we compile results regarding the symmetry properties of the function $ M$ (see Remark \ref{rema:signo}) and prove that its zero set corresponding to non-zero values of $x$ is contained within a compact subset of the plane.

\begin{prop}
\label{prop:tildeM}
Suppose that the parameters $a$ and $b$ are fixed with $ab<0$. Then, function $M$ defined in \eqref{ecu:Melnikovfuntilde} satisfies the following properties:
\begin{enumerate}
\item \label{item1proptildeM} $ M(-x,\mu) = -  M(x,\mu)$ for every $(x,y)\in\mathbb{R}^2$. In particular, $ M(0,\mu) =0$ for all $\mu\in\mathbb{R}$. Moreover, if $|\mu|\leqslant 1$, then   $ M_x(0,\mu)=2\pi b \ne 0$.
\item \label{item2proptildeM} $ M(x,-\mu) =  M(x,\mu)$ for every $(x,\mu)\in\mathbb{R}^2$.
\item \label{item3proptildeM} Suppose that the periodic function $v(t)=x-\mu\cos t $, with $x\ne 0$, is one--zonal. Then,  $ M(x,\mu) =0$ if and only if $|x|=1-b/a$ and $|\mu|\leqslant -b/a$. In addition, $ M_x(\pm(1-b/a),\mu)=2\pi a \ne 0$ for $|\mu|\leqslant -b/a$.
\item\label{itemdermuproptildeM}  $ M_\mu(0,\mu)=0$ for all $\mu\in \mathbb{R}$. Moreover,  if the set  $ A_{in}(x,\mu)$ defined in \eqref{ecu:Atilde} is $ A_{in}(x,\mu)= \emptyset$ or $ A_{in}(x,\mu)= [0,2\pi]$, then  $ M_\mu(x,\mu)=0$. In addition, $ M_\mu(x,\mu)\ne 0$ if $x\ne 0$ and $\emptyset\ne A_{in}(x,\mu)\varsubsetneq [0,2\pi]$.
\item \label{item5proptildeM} If $|x|> 1-b/a$, then $\sgn\left( M(x,\mu)\right)=\sgn(x) \sgn(a)\ne 0$ for all $\mu\in\mathbb{R}$.
\item \label{item6proptildeM}There exists a value $\check \mu=\check \mu(a,b)>0$ such that $\sgn\left( M(x,\mu)\right)=\sgn(x) \sgn(a)\ne 0$ for $x\ne 0$ and $|\mu|>\check \mu$.
\end{enumerate}
\end{prop}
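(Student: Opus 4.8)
The plan is to establish the six items of Proposition~\ref{prop:tildeM} mostly by direct computation from the integral representation \eqref{ecu:Melnikovfuntilde} of $M$ and the derivative formulas \eqref{ecu:DerMtildex}--\eqref{ecu:DerMtildemu}, reserving a genuine argument only for the compactness-type claims in items \eqref{item5proptildeM} and \eqref{item6proptildeM}. For item \eqref{item1proptildeM}, I would change variables $t\mapsto t+\pi$ in $M(-x,\mu)=\int_0^{2\pi}f(-x-\mu\cos t)\,dt$, using $\cos(t+\pi)=-\cos t$ and the oddness $f(-s)=-f(s)$ from \eqref{eq:f}, to get $M(-x,\mu)=-M(x,\mu)$; setting $x=0$ gives $M(0,\mu)=0$, and for $|\mu|\le 1$ one has $|{-\mu\cos t}|\le 1$ for all $t$, so $A_{in}(0,\mu)=[0,2\pi]$ and \eqref{ecu:DerMtildex} yields $M_x(0,\mu)=2\pi a-(a-b)2\pi=2\pi b\ne 0$. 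Item \eqref{item2proptildeM} follows from the substitution $t\mapsto 2\pi-t$ (equivalently $t\mapsto -t$) since $\cos$ is even. Item \eqref{itemdermuproptildeM} is read off Remark~\ref{rema:derMmu}: $M_\mu(0,\mu)=0$ by oddness of $M$ in $x$ differentiated appropriately (or directly, since for $x=0$ the set $A_{in}$ is symmetric under $t\mapsto \pi-t$ making $\int_{A_{in}}\cos t\,dt=0$), the one-zonal case gives $M_\mu=0$ by \eqref{ecu:DerMtildemu}, and the nonvanishing for $x\ne 0$ with $A_{in}$ a proper nonempty subset is exactly \eqref{eq:dertildeMmu2z}, \eqref{eq:dertildeMmu2zb} and \eqref{eq:dertildeMmu3z}, noting that in the two-zonal case $\sin t_1\ne 0$ and in the three-zonal case $\sin\hat t_2>\sin\hat t_1$ because $0<\hat t_1<\hat t_2<\pi$ cannot have equal sines unless $\hat t_1=\pi-\hat t_2$, which \eqref{eq:t1t23z} rules out (it would force $(x-\mu\cos\hat t_1)(x+\mu\cos\hat t_1)=-1$, i.e. $x^2-\mu^2\cos^2\hat t_1=-1<0$, compatible only if also $x=0$, excluded).

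For item \eqref{item3proptildeM}, one-zonal with $x\ne 0$ means either $x-\mu\cos t\ge 1$ for all $t$ (so $x-|\mu|\ge 1$) or $x-\mu\cos t\le -1$ for all $t$ (so $x+|\mu|\le -1$). In the first case $f(x-\mu\cos t)=a(x-\mu\cos t)+(b-a)$, and since $\int_0^{2\pi}\cos t\,dt=0$ we get $M(x,\mu)=2\pi(ax+b-a)$, which vanishes iff $x=(a-b)/a=1-b/a$; combined with $x-|\mu|\ge 1$ this requires $|\mu|\le 1-b/a-1=-b/a$. The second case is symmetric by item \eqref{item1proptildeM}. Here I must be slightly careful that $1-b/a>0$ under $ab<0$: indeed $b/a<0$, so $1-b/a>1>0$, and $-b/a>0$ as well, so the constraints are consistent; the derivative claim $M_x(\pm(1-b/a),\mu)=2\pi a$ follows from \eqref{ecu:DerMtildex} with $A_{in}=\emptyset$.

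The substantive part is items \eqref{item5proptildeM} and \eqref{item6proptildeM}. For \eqref{item5proptildeM}, I would use identity \eqref{ecu:Mtildeversion}, $M(x,\mu)=xM_x(x,\mu)+\mu M_\mu(x,\mu)-(a-b)(m(A^+)-m(A^-))$, and instead argue more directly via monotonicity in $x$: by \eqref{ecu:DerMtildex}, $M_x(x,\mu)=2\pi a-(a-b)m(A_{in}(x,\mu))$ has the sign of $a$ whenever $x$ is such that the corresponding $v$ is one-zonal (then $m(A_{in})\in\{0,2\pi\}$ and $2\pi a$ or $2\pi b$ results — wait, $2\pi a-(a-b)\cdot 2\pi=2\pi b$, opposite sign). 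So the clean route is: fix $\mu$ and $x_0>1-b/a$; if $v_{x_0}$ is one-zonal then necessarily $x_0-\mu\cos t\ge x_0-|\mu|$; combined with item \eqref{item3proptildeM}'s analysis, when $x>1-b/a$ and one-zonal, $M(x,\mu)=2\pi(ax+b-a)=2\pi a(x-(1-b/a))$ has sign $\sgn(a)=\sgn(x)\sgn(a)$. If $v_{x_0}$ is not one-zonal, I would show $M(x,\mu)$ cannot vanish for $x>1-b/a$ by noting that $x\mapsto M(x,\mu)$, once it reaches a region where $v$ is at least two-zonal, still satisfies $M_x\ge 2\pi a - (a-b)2\pi = 2\pi b$ and $M_x\le 2\pi a$; a cleaner argument: write $M(x,\mu)=\int_0^{2\pi} f(x-\mu\cos t)\,dt$ and use that $f(s)\ge as+(b-a)$ for all $s$ when $a>0$ (since $f(s)-as-(b-a)=(a-b)\max(0,1-s)-\ldots$; in fact $f(s)=as+(b-a)+(a-b)(1-s)^+ -(a-b)(-1-s)^+$ type decomposition), hence $M(x,\mu)\ge 2\pi(ax+b-a)>0$ for $x>1-b/a$ when $a>0$, with equality only in the one-zonal regime; the $a<0$ and $x<-(1-b/a)$ cases follow by the symmetries. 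Finally \eqref{item6proptildeM} is the limit $|\mu|\to\infty$ statement: for $|\mu|$ large, every $x$ in any fixed bounded set produces a $v$ that is three-zonal (or leaves every bounded band), but more to the point, if $M(x,\mu)=0$ with $x\ne 0$ then by \eqref{item5proptildeM} we need $0<|x|\le 1-b/a$, a fixed compact $x$-range; on that range I would show that for $|\mu|>\check\mu$ the value $M(x,\mu)$ has sign $\sgn(x)\sgn(a)$ by estimating $M(x,\mu)=xM_x+\mu M_\mu -(a-b)(m(A^+)-m(A^-))$ and observing that for large $|\mu|$ the dominant behavior is governed by the outer linear pieces, i.e. $m(A^+),m(A^-)\to$ limits with $m(A^+)-m(A^-)$ controlled, while $|M_x|$ and the bounded terms stay uniformly bounded and the one-zonal part forces positivity; concretely, for $|x|\le 1-b/a$ and $|\mu|$ huge, $M(x,\mu)\approx \int_0^{2\pi} a(x-\mu\cos t)\,dt + (\text{bounded}) = 2\pi a x + O(1)$, no — that is $O(1)$ in $\mu$ since $\int\cos=0$, so I instead bound $|M(x,\mu)-2\pi(ax + (b-a)\sgn(x))|$ by a quantity tending to $0$ as $|\mu|\to\infty$ uniformly on $\{|x|\le 1-b/a\}$, using that the ``inner'' time-set $A_{in}$ has measure $O(1/|\mu|)$ there; then $2\pi(ax+(b-a)\sgn(x)) = 2\pi\sgn(x)(a|x|+b-a)$ and since $|x|\le 1-b/a$ gives $a|x|+b-a\le a(1-b/a)+b-a=0$ when $a<0$... so the sign is $\sgn(x)\sgn(a)$ after checking $a|x|+b-a$ has sign $-\sgn(a)$ on $0<|x|<1-b/a$ and vanishes at the endpoint — handled by taking $\check\mu$ large enough that the error is smaller than the gap, with the endpoint $|x|=1-b/a$ covered by item \eqref{item3proptildeM}. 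The main obstacle is making this last uniform estimate rigorous at the boundary $|x|$ near $1-b/a$, where $a|x|+b-a\to 0$ and one must quantify that the $o(1)$ error beats the distance to the endpoint; I expect to resolve it by splitting the $x$-range into $|x|\le 1-b/a-\delta$ (handled by the crude estimate) and $1-b/a-\delta\le |x|\le 1-b/a$ (handled by a refined one-sided bound showing $M$ keeps the sign of its one-zonal extension), or simply by a compactness/continuity argument on the closed bounded $x$-range once the pointwise sign is known for each fixed $x\ne 0$.
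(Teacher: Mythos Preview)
Your treatment of items \eqref{item1proptildeM}--\eqref{itemdermuproptildeM} is essentially the paper's, modulo two small slips: in \eqref{item3proptildeM} you omit the one-zonal case $|x-\mu\cos t|\le 1$ for all $t$ (there $M(x,\mu)=2\pi bx\ne 0$ for $x\ne 0$, so it does not affect the conclusion), and in \eqref{itemdermuproptildeM} the line ``$x^2-\mu^2\cos^2\hat t_1=-1<0$, compatible only if also $x=0$'' does not follow as written---you need to combine $x^2-\mu^2\cos^2\hat t_1=-1$ with $(x-\mu\cos\hat t_1)^2=1$ to force $2x(x-\mu\cos\hat t_1)=0$, hence $x=0$. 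Your approach to \eqref{item5proptildeM} is genuinely different from the paper's and in fact cleaner: the pointwise affine bound $f(s)\ge as+(b-a)$ for $a>0$ (and the reverse for $a<0$) immediately gives $M(x,\mu)\ge 2\pi a(x-(1-b/a))$, whereas the paper fixes $x>1-b/a$, starts at $\mu=0$, and tracks the sign of $M_\mu$ via Remark~\ref{rema:derMmu} to propagate the sign of $M$ along the $\mu$-axis.

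The genuine gap is in \eqref{item6proptildeM}. Your asymptotic $M(x,\mu)\to 2\pi(ax+(b-a)\sgn(x))$ is incorrect: for bounded $x$ and $|\mu|\to\infty$ one has $m(A^+)-m(A^-)\to 0$ and $m(A_{in})\to 0$, so from \eqref{ecu:Mtildeversion} the actual limit is $2\pi ax$. With your claimed limit the sign on $0<|x|<1-b/a$ comes out \emph{opposite} to $\sgn(x)\sgn(a)$, and with the correct limit the obstruction to uniformity is at $x\to 0$, not at $|x|\to 1-b/a$: the error $M(x,\mu)-2\pi ax$ is $O(1/|\mu|)$ uniformly, which does not beat $2\pi ax$ when $|x|\lesssim 1/|\mu|$, so neither your $\delta$-splitting near $1-b/a$ nor a pointwise compactness argument yields a single $\check\mu$. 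The paper avoids this entirely by working with $M_x$ instead of $M$: since $m(A_{in}(x,\mu))\to 0$ uniformly on $|x|\le 1-b/a$ as $|\mu|\to\infty$, formula \eqref{ecu:DerMtildex} gives $\sgn(M_x(x,\mu))=\sgn(a)$ for all such $x$ once $|\mu|>\check\mu$; together with $M(0,\mu)=0$ this monotonicity in $x$ forces $\sgn(M(x,\mu))=\sgn(x)\sgn(a)$ on $0<|x|\le 1-b/a$, and item \eqref{item5proptildeM} handles $|x|>1-b/a$.
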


\begin{proof}
\begin{enumerate}
\item Since  function $f$ given in \eqref{eq:f} is odd, we have that 
\begin{align*}
 M(-x,\mu) = \int_0^{2\pi} f(-x-\mu\cos t)\,dt = -\int_0^{2\pi} f(x+\mu \cos t)\,dt= \\ 
 -\int_0^{2\pi} f(x-\mu\cos(t-\pi))\,dt = -\int_{-\pi}^{\pi} f(x-\mu\cos t)\,dt   = - M(x,\mu).
\end{align*}
Hence, $ M(0,\mu) =0$ for all $\mu\in\mathbb{R}$. If $|\mu|\leqslant 1$, then  the function $v(t)=-\mu\cos t$ lives in the zone $|x|\leqslant 1$, so $ A_{in}(0,\mu)=[0,2\pi]$ and, from \eqref{ecu:DerMtildex}, $ M_x(0,\mu)=2\pi b \ne 0$. 
\item Function $ M$ satisfies 
\begin{align*}
 M(x,-\mu) &= \int_0^{2\pi} f(x+\mu\cos t)\,dt = & \\ 
 &  \int_0^{2\pi} f(x-\mu \cos (t-\pi)\,dt= \int_{-\pi}^{\pi} f(x-\mu\cos t)\,dt   =  M(x,\mu). &  
\end{align*}
\item When the periodic function $v(t)=x-\mu\cos t $ is one--zonal, one gets
\[
 M(x,\mu)= \int_0^{2\pi}f(x)\,dt=2\pi f(x).
\]
Thus, for $x\ne 0$, $ M(x,\mu)=0$ if and only if  $|x|=1-b/a$, because $ab<0$. Now, it is direct to observe that functions $v^\pm(t)=\pm(1-b/a)-\mu\cos t$ are one--zonal if and only if $|\mu|\leqslant -b/a$.  Therefore, for $|\mu|\leqslant -b/a$, it has $ A_{in}(\pm(1-b/a),\mu)=\emptyset$ and so, from expression \eqref{ecu:DerMtildex},  $ M_x(\pm(1-b/a),\mu)=2\pi a \ne 0$ and the proof of item \eqref{item3proptildeM} is concluded. 
\item By using of item  \eqref{item1proptildeM}, $ M(0,\mu) =0$ for all $\mu\in\mathbb{R}$ and so $ M_\mu(0,\mu) =0$ for all $\mu\in\mathbb{R}$. By means of Remark \ref{rema:derMmu}, it follows that $ M_\mu(x,\mu)=0$ if $ A_{in}(x,\mu)= \emptyset$ or $ A_{in}(x,\mu)= [0,2\pi]$. Moreover, when $\emptyset\ne A_{in}(x,\mu)\varsubsetneq [0,2\pi]$ and the periodic function $v(t)=x-\mu\cos t$ is two--zonal, from \eqref{eq:dertildeMmu2z} and \eqref{eq:dertildeMmu2zb}, it gets $ M_\mu(x,\mu)\ne 0$. Finally, if $\emptyset\ne A_{in}(x,\mu)\varsubsetneq [0,2\pi]$, the periodic function $v$ is three--zonal and $ M_\mu(x,\mu)=0$, from \eqref{eq:dertildeMmu3z}, it deduces the relationship $\hat t_2=\pi-\hat t_1$, which implies, by means of \eqref{eq:t1t23z}, $x=0$ and the proof of item \eqref{itemdermuproptildeM} is completed. 
\item For $\mu=0$, we have 
\[
 M(x,0)= \int_0^{2\pi}f(x)\,dt=2\pi f(x)
\]
and so, if $|x|>1-b/a$, then it is direct to see that $\sgn\left( M(x,0)\right)=\sgn(x) \sgn(a)$. Now, we will prove the result for $x>1-b/a$ and $\mu<0$. When $x>1-b/a$ and $\mu<0$, the global maximum  of the function  $v(t)=x-\mu\cos t$ is $v(0)=x-\mu$ and its global minimum is $v(\pi)=x+\mu$. Hence, for $x>1-b/a$ fixed, when $\mu$ decreases from zero to $-\infty$, the set  $ A_{in}(x,\mu)$ defined in \eqref{ecu:Atilde} adopts the forms $ A_{in}(x,\mu)=\emptyset$ or  $ A_{in}(x,\mu)=[0,2\pi]$, $ A_{in}(x,\mu)=[t_1,2\pi-t_1]$, and $ A_{in}(x,\mu)=[\hat t_1,\hat t_2]\cup[2\pi-\hat t_2,2\pi-\hat t_1]$, (with $0<t_1<\pi$ and $0<\hat t_1<\hat t_2<\pi$). Therefore,
from Remark \ref{rema:derMmu} and item \eqref{itemdermuproptildeM}, it is deduced that $ M_\mu(x,\mu)=0$ if $ A_{in}(x,\mu)=\emptyset$ or  $ A_{in}(x,\mu)=[0,2\pi]$ and, 
\[
\sgn\left(  M_\mu(x,\mu)\right)=-\sgn(a-b)=-\sgn(a),
\]
when $\emptyset\ne A_{in}(x,\mu)\varsubsetneq [0,2\pi]$, because $ab<0$. Thus, $\sgn\left( M(x,\mu)\right)=\sgn(x) \sgn(a)\ne 0$ for $x>1-b/a$ and $\mu<0$.
By using items \eqref{item1proptildeM} and \eqref{item2proptildeM}, the result follows for $|x|>1-b/a$ and $\mu\in\mathbb{R}$. 
\item In view of items \eqref{item1proptildeM}, \eqref{item2proptildeM}, and \eqref{item5proptildeM}, we only need to prove the result for $x\in(0,1-b/a]$ and $\mu>0$. As previously established (see, for instance, Remark \ref{rema:derMmu}), the set $ A_{in}(x,\mu)$ given in \eqref{ecu:Atilde} is determined by the solutions $t\in(0,2\pi)$ of the equation $|x-\mu\cos t|=1$. Hence, by compacity, $m\left( A_{in}(x,\mu)\right)$ tends to zero as $\mu \to +\infty$ (uniformly in $x$) and so, from expression \eqref{ecu:DerMtildex}, there exist a value $\check \mu=\check \mu(a,b)>0$ such that $\sgn\left(  M_x(x,\mu)\right)=\sgn(2\pi a)\ne 0$ for $x\in(0,1-b/a]$ and $\mu>\check\mu$. Therefore, by using Theorem \ref{Th:-mu_large} and Proposition \ref{prop:existclMel} follows that $M(x,\mu)\ne0$ for $x\in(0,1-b/a]$ and $\mu>\check\mu$. Now, by means of item \eqref{item5proptildeM} and the continuity of function $ M$ one can conclude that $\sgn\left( M(x,\mu)\right)=\sgn(x)\sgn(a)\ne 0$ for $x\in(0,1-b/a]$ and $\mu>\check\mu$ and the proof is finished.
\end{enumerate}
\end{proof}
\begin{rema}
\label{rema:unizonaspertur}
It should be noted that item  \eqref{item1proptildeM} of Proposition \ref{prop:tildeM} provides a reformulation of Propositions \ref{prop:sym} and \ref{prop:2}   tailored to the perturbed differential equation \eqref{eq:ef} because the function $v(t)=-\mu\cos t$ is symmetric and the inequality $|\mu|\leqslant \sqrt{b^2+1}$ becomes the inequality $|\mu|\leqslant 1$ when $b$ is transformed into $\varepsilon b$ and $\varepsilon$ goes to zero.  Analogously,  item  \eqref{item3proptildeM} of Proposition \ref{prop:tildeM} is a reformulation of Proposition \ref{prop:1}. In this case, the inequality $|\mu|\leqslant -b \sqrt{a^2+1}/a$ becomes the inequality $|\mu|\leqslant -b/a$ when $a$ and $b$  are transformed into $\varepsilon a$ and $\varepsilon b$, respectively, and $\varepsilon$ tends to zero. It is worth recalling that, for $ab\ne 0$, piecewise differential equation \eqref{eq:main1} admits at most one periodic solution within each linearity region. Based on Proposition \ref{prop:existclMel} and items \eqref{item1proptildeM} and \eqref{item3proptildeM} of Proposition \ref{prop:tildeM}, sufficient conditions are established for the existence of one--zonal periodic solutions in the perturbed system \eqref{eq:ef} for $|\varepsilon|$ small enough. Finally, item \eqref{item6proptildeM} of Proposition \ref{prop:tildeM} is a reformulation of Theorem \ref{Th:-mu_large}.
\end{rema}

Continuing with the aim of characterizing the zero set of the function $ M$, we shall prove that this set is defined by a unique function $\mu$ that depends on $x$, for strictly positive values of both $x$ and $\mu$. Naturally, the symmetry properties described in items \eqref{item1proptildeM} and \eqref{item2proptildeM} of Proposition \ref{prop:tildeM} allow us to extend this dependence from the first quadrant of the $x-\mu$ plane to any other quadrant.  The following Theorem refers to a complete characterization of the zero set of $ M$ (see Figure~\ref{fig:zeroset} for a scheme of the zero set of $ M$). 

\begin{theo}\label{theo:zerosettildeM}
Suppose that the parameters $a$ and $b$ are fixed with $ab<0$. Then, there exists a unique function  $\varphi:[0,1-b/a]\longrightarrow (0,+\infty)$ such that the zero set of function $ M$ defined in \eqref{ecu:Melnikovfuntilde} can be written in the form:
\begin{equation}\label{eq:M1}
 M^{-1}(\{0\})=\left\{(0,\mu)\in\mathbb{R}^2:\mu\in\mathbb{R} \right\}\cup Z_1\cup S_1(Z_1)\cup S_2(Z_1)\cup S_2(S_1(Z_1)),
\end{equation}
where $S_1$ and $S_2$ are the symmetries $S_1(x,\mu):=(-x,\mu)$ and $S_2(x,\mu):=(x,-\mu)$ and 
\[
Z_1=\left\{(1-b/a,\mu)\in\mathbb{R}^2:0\leqslant \mu\leqslant  -b/a \right\}\cup \left\{(x,\mu)\in\mathbb{R}^2: \mu=\varphi(x), 0< x < 1-b/a \right\}.
\]

Moreover, function $\varphi$ satisfies the following properties:
\begin{enumerate}
\item \label{item1thM}
$\varphi$ is differentiable in $(0,1-b/a)$.
\item 
$\varphi$ possesses a unique critical point $x_1\in (1-b/a,0)$. Furthermore,  $\varphi'(x)>0$ for $x\in(0,x_1)$, $\varphi'(x)<0$ for $x\in(x_1,1-b/a)$ and  $x_1$ is given by 
\begin{equation}
\label{ecu:defx1}
x_1:=1-\mu_1\cos (c),
\end{equation}
where the values $\mu_1$ and $c$ are defined in \eqref{ecu:defmu1yc}.
\item  \label{item3thM} $\varphi$ is continuous in $[0,1-b/a]$, $\varphi(1-b/a)=-b/a$ and $\varphi(0)=\mu_2$, with $\mu_2$ defined in \eqref{ecu:defmu1yc}.
\end{enumerate}
\end{theo}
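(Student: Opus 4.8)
The plan is to use the symmetries of $M$ from Proposition~\ref{prop:tildeM}\eqref{item1proptildeM}--\eqref{item2proptildeM} to reduce everything to the open first quadrant $\{x>0,\ \mu>0\}$, and there to analyse, for each fixed $x$, the one--variable function $\mu\mapsto M(x,\mu)$. By Remark~\ref{rema:signo} we may take $a<0<b$. For $0<x<1-b/a$ the periodic function $v(t)=x-\mu\cos t$ is one--zonal exactly for $0<\mu\le\mu_0(x):=|x-1|$, and there $M(x,\mu)=2\pi f(x)>0$ since $ab<0$. For $\mu>\mu_0(x)$ one has $\emptyset\ne A_{in}(x,\mu)\varsubsetneq[0,2\pi]$, so $M_\mu(x,\mu)\ne 0$ by Proposition~\ref{prop:tildeM}\eqref{itemdermuproptildeM}; evaluating the sign just above $\mu_0(x)$, where $v$ is two--zonal and meets only $x=1$ (so $M_\mu=2(a-b)\sin t_1<0$ by \eqref{eq:dertildeMmu2zb}), and using that $\{\mu>\mu_0(x)\}$ is connected, we get $M_\mu<0$ on that whole ray. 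Hence $\mu\mapsto M(x,\mu)$ is constant and positive on $(0,\mu_0(x)]$ and strictly decreasing afterwards, and since it turns negative for $\mu>\check\mu$ by Proposition~\ref{prop:tildeM}\eqref{item6proptildeM}, it has a unique zero $\varphi(x)\in(\mu_0(x),\check\mu)$. This defines $\varphi$ on $(0,1-b/a)$, and the implicit function theorem (using $M\in\mathcal{C}^1$ from Proposition~\ref{prop:dertildeM} and $M_\mu\ne0$ there) gives differentiability, i.e.\ item~\eqref{item1thM}; the same analysis at $x=1-b/a$ gives $M(1-b/a,\mu)=2\pi f(1-b/a)=0$ for $0\le\mu\le-b/a$ and $<0$ afterwards, so we set $\varphi(1-b/a):=-b/a$. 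Combining this with the symmetries and with Proposition~\ref{prop:tildeM}\eqref{item3proptildeM},\eqref{item5proptildeM},\eqref{item6proptildeM} (no zeros with $x\ne0$ outside the compact set $0<x\le1-b/a$, $0\le\mu\le\check\mu$) yields \eqref{eq:M1}, and $\varphi$ is unique because $\varphi(x)$ is forced to be the unique positive zero of $M(x,\cdot)$.

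For the endpoint value $\varphi(0)=\mu_2$ I would use that $M(0,\mu)=0$ for all $\mu$, so $M(x,\mu)=\int_0^x M_x(s,\mu)\,ds$ and the sign of $M(x,\cdot)$ near $x=0$ is governed by $M_x(0,\mu)=2\pi a-(a-b)\,m(A_{in}(0,\mu))$. Since $A_{in}(0,\mu)=\{t:|\cos t|\le1/|\mu|\}$, a direct computation shows that $\mu\mapsto M_x(0,\mu)$ is strictly decreasing in $|\mu|$, equals $2\pi b>0$ for $|\mu|\le1$, tends to $2\pi a<0$, and vanishes exactly at $|\mu|=1/\cos(c/2)=\mu_2$ (using $\pi a/(a-b)=\pi-c$). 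A uniform--continuity argument on compact $\mu$--intervals then gives $M(x,\mu)>0$ for all small $x>0$ when $\mu<\mu_2$ and $M(x,\mu)<0$ when $\mu>\mu_2$; as $\varphi(x)$ is a zero with $\varphi(x)\le\check\mu$, this forces $\varphi(x)\to\mu_2$. Continuity at $1-b/a$ follows from $\varphi(x)>\mu_0(x)=x-1\to-b/a$ (lower bound) together with $M(1-b/a,-b/a+\eta)<0$ and continuity of $M$ (giving $\limsup\varphi\le-b/a+\eta$ for every $\eta>0$); continuity on the open interval is contained in differentiability. This yields item~\eqref{item3thM}, except for the value $\varphi(x_1)=\mu_1$, which comes out of the next step.

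For the critical--point statement I would note that along the zero curve $\varphi'(x)=-M_x(x,\varphi(x))/M_\mu(x,\varphi(x))$ with $M_\mu<0$, so $\varphi'$ has the sign of $M_x(x,\varphi(x))=2\pi a-(a-b)\,m(A_{in})$; in particular a critical point is a point where $m(A_{in})=2\pi a/(a-b)=2(\pi-c)$. The curve never meets the one--zonal regime (where $M=2\pi f(x)\ne0$), so it is always two-- or three--zonal and $\varphi(x)>\mu_0(x)$ strictly; I would make \eqref{ecu:Mtildeversion} explicit in each regime using the descriptions of $A_{in}$, $A^{\pm}$ from Remark~\ref{rema:derMmu}. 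In the three--zonal case, writing $p=(\hat t_1+\hat t_2)/2$, $q=(\hat t_2-\hat t_1)/2$, the relations \eqref{eq:t1t23z} become $x=\cot p\cot q$, $\mu=1/(\sin p\sin q)$, and $M=0$ reads $\cot p\,\big[\pi a\cot q-2(a-b)q\cot q+2(a-b)\big]=(a-b)(\pi-2p)$; imposing also the critical condition $q=(\pi-c)/2$ makes the bracket collapse to $2(a-b)$ \emph{precisely} because $c=\pi b/(b-a)$, leaving $2\cot p=\pi-2p$, i.e.\ $\tan(p-\tfrac\pi2)=p-\tfrac\pi2$, whose only root with $|p-\tfrac\pi2|<\tfrac c2$ is $p=\tfrac\pi2$, giving $x=0$. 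Hence there is no critical point in the three--zonal part, nor at a point where the curve crosses the two/three--zonal boundary $\mu=x+1$ (there $M_x=0$ would force $\cos\hat t_2=-\cos c$, hence the point $(\tan^2(c/2),\sec^2(c/2))$, which one checks is not on the zero set). In the two--zonal case, with $t_1\in(0,\pi)$ solving $x-\mu\cos t_1=1$, one gets $M=2\pi a x-2(a-b)\big[x t_1-\mu\sin t_1+\pi-t_1\big]$; the critical condition is $t_1=\pi a/(a-b)=\pi-c$, and substituting this and $\mu\cos t_1=x-1$ into $M=0$ gives, after an elementary manipulation, $x=1-c\cot c=1-\mu_1\cos c=x_1$ and $\mu=\mu_1$. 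Thus $x_1$ is the unique critical point; an elementary check (using $c<\tan c$, $\tfrac c2<\tan\tfrac c2$, $\tfrac c2<\sin\tfrac c2$ and $-b/a=c/(\pi-c)$) shows $0<x_1<1-b/a$, that $(x_1,\mu_1)$ lies in the two--zonal part of the curve, and that $\mu_1>\mu_2=\varphi(0)$ and $\mu_1>-b/a=\varphi(1-b/a)$. Since $\varphi$ has no other critical point, it must increase strictly from $\mu_2$ on $(0,x_1)$ up to $\mu_1=\varphi(x_1)$ and decrease strictly back to $-b/a$ on $(x_1,1-b/a)$, which is item~(2) and finishes item~\eqref{item3thM}.

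The main obstacle is this critical--point computation: one must carry the regime--dependent formulas for $M$ far enough to see that imposing $M=0$ and $M_x=0$ simultaneously collapses — thanks exactly to the identity $\pi a/(a-b)=\pi-c$ behind the choice $c=\pi b/(b-a)$ — to the scalar equation $\tan\psi=\psi$, which has a single relevant root; and then the careful bookkeeping of which linearity zones $v$ visits along the zero curve, ensuring that this root produces exactly one critical point, located at $x_1=1-\mu_1\cos c$ strictly inside $(0,1-b/a)$ and inside the two--zonal arc of the curve. The remaining parts are a routine but delicate sign and continuity analysis.
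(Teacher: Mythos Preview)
Your approach is essentially the paper's own: reduce to the first quadrant by the symmetries, show that for each fixed $x\in(0,1-b/a)$ the map $\mu\mapsto M(x,\mu)$ is constant on the one--zonal range and strictly monotone afterwards (using $M_\mu\neq0$ from Proposition~\ref{prop:tildeM}\eqref{itemdermuproptildeM}), hence has a unique positive zero $\varphi(x)$; get differentiability from the implicit function theorem; and locate the critical points by imposing $M=M_x=0$ separately in the two-- and three--zonal regimes via \eqref{ecu:Mtildeversion}. Your $(p,q)$ parametrisation in the three--zonal case leads to $\tan\psi=\psi$, which is exactly the equation $\cos s+(s-\tfrac\pi2)\sin s=0$ with $s=p$ that the paper obtains; in both arguments the only admissible root is $p=\pi/2$, forcing $x=0$. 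In the two--zonal case you recover $t_1=\pi-c$, $\mu=\mu_1$, $x=x_1$ just as the paper does (the paper's $t_1=c$ comes from using $w(t)=x+\varphi(x)\cos t$ instead of $v(t)=x-\mu\cos t$, i.e.\ the opposite sign of $\mu$).

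Two small points. First, in your list of ``elementary checks'' the inequality $c/2<\sin(c/2)$ is backwards; what you need (and what gives $\mu_1>\mu_2$) is $\sin(c/2)<c/2$. The other inequalities you quote are the content of Lemma~\ref{lema:v1} and are correct. Second, your route to $\varphi(0)=\mu_2$ differs slightly from the paper's: you compute $M_x(0,\mu)$ explicitly and identify its unique zero $\mu_2$, then squeeze $\varphi(x)$ towards it; the paper instead argues that since both $\{x=0\}$ and the graph of $\varphi$ lie in $M^{-1}(0)$ and meet at $(0,\varphi(0))$, the implicit function theorem forces $M_x(0,\varphi(0))=0$, and then solves that equation. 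Both are valid; yours is a bit more direct but needs the uniform--continuity step you allude to.
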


Before presenting the proof of Theorem~\ref{theo:zerosettildeM}, we require a technical result concerning the solution of~\eqref{eq:ef} for $\epsilon=0$ determined by the initial condition $x_1$,  which lives exactly in the zones $x\geqslant 1$ and $-1<x<1$. 

\begin{lema}
\label{lema:v1}
Suppose that the parameters $a$ and $b$ are fixed with $ab<0$ and let us consider the values   $\mu_1$   and $c$ given in \eqref{ecu:defmu1yc} and the value  $x_1$ given in \eqref{ecu:defx1}. Then, the inequality $\mu_1\cos c>b/a$ holds and the periodic function $v_1(t)=x_1+\mu_1\cos t $ lives exactly in the zones $x\geqslant 1$ and $-1<x<1$.
\end{lema}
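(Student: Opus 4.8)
The plan is to reduce the whole lemma to a little sign bookkeeping for $c$ together with the classical inequality $\tan\psi>\psi$ on $(0,\pi/2)$. First I would pin down the location of $c$: since $ab<0$ rules out both $b=0$ and $b=a$, the number $c=\pi b/(b-a)$ is well defined, and writing $c=\pi r$ with $r=b/(b-a)$ one checks $r\in(0,1)$ in either admissible sign configuration ($a>0>b$ or $a<0<b$), so $c\in(0,\pi)$, $\sin c>0$ and $\mu_1=c/\sin c>0$. I would also solve $c=\pi b/(b-a)$ for the ratio $b/a$ to obtain the identity $b/a=c/(c-\pi)$, hence $-b/a=c/(\pi-c)>0$; this is what converts part (a) into a statement about $c$ alone.

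For part (b): since $v_1(t)=x_1+\mu_1\cos t$ with $\mu_1>0$, the range of $v_1$ is the interval $[x_1-\mu_1,x_1+\mu_1]$, which after inserting $x_1=1-\mu_1\cos c$ becomes $\big[\,1-\mu_1(1+\cos c),\ 1+\mu_1(1-\cos c)\,\big]$. The right endpoint is $>1$ because $\mu_1>0$ and $\cos c<1$, and the left endpoint is $<1$ because $1+\cos c>0$, so $v_1$ genuinely meets both the zone $x\ge 1$ and the zone $-1<x<1$; it then remains only to show the left endpoint exceeds $-1$, i.e. $\mu_1(1+\cos c)<2$. Using $1+\cos c=2\cos^2(c/2)$ and $\sin c=2\sin(c/2)\cos(c/2)$ one gets $\mu_1(1+\cos c)=c\cot(c/2)$, so the inequality is $(c/2)\cot(c/2)<1$, that is $\tan(c/2)>c/2$, which holds because $c/2\in(0,\pi/2)$. (Incidentally $v_1(c)=x_1+\mu_1\cos c=1$, which records that $v_1$ leaves the band exactly at $t=\pm c$; this will be convenient later.)

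For part (a): the claim $\mu_1\cos c>b/a$ becomes, after dividing by $c>0$ and using $b/a=c/(c-\pi)$, the inequality $\cot c+\dfrac{1}{\pi-c}>0$; the substitution $\psi=\pi-c\in(0,\pi)$ together with $\cot c=-\cot\psi$ turns this into $\dfrac{1}{\psi}>\cot\psi$, which is immediate for $\psi\ge\pi/2$ (the right-hand side is $\le 0$) and is again $\tan\psi>\psi$ for $\psi\in(0,\pi/2)$. Equivalently this says $x_1<1-b/a$, which is precisely the form in which the lemma is invoked in Theorem~\ref{theo:zerosettildeM} to place $x_1$ inside $(0,1-b/a)$.

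I do not expect a genuine obstacle in this lemma; the only point that needs care is that $\cos c$ has no fixed sign as $c$ ranges over $(0,\pi)$, so one should avoid any temptation to "cancel $\cos c$" in part (a) and instead use the uniform reformulation $\cot c+1/(\pi-c)>0$, which handles both signs of $\cos c$ at once. One should also verify that every inequality used is strict — which it is, since $c$ avoids the endpoints $0,\pi$ and $\tan\psi-\psi$ has no zero in $(0,\pi/2)$ — so that $v_1$ really does not touch the line $x=-1$.
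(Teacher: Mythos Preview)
Your proof is correct and follows the same overall scaffold as the paper's: establish $c\in(0,\pi)$, rewrite $\mu_1\cos c>b/a$ as $\cot c+1/(\pi-c)>0$, and for the range of $v_1$ reduce to $\mu_1(1+\cos c)<2$. The only difference is in how the two elementary trigonometric inequalities are discharged. The paper clears denominators to obtain $(\pi-c)\cos c+\sin c>0$ and $c(1+\cos c)/\sin c<2$, and proves each by checking that the auxiliary functions $g(s)=(\pi-s)\cos s+\sin s$ and $h(s)=s(1+\cos s)/\sin s$ are strictly monotone on $(0,\pi)$ with the right boundary values. You instead reduce both to the single classical fact $\tan\psi>\psi$ on $(0,\pi/2)$: for part~(a) via the substitution $\psi=\pi-c$, and for part~(b) via the half-angle identity $\mu_1(1+\cos c)=c\cot(c/2)$. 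Your route is a little more economical---one inequality used twice, no derivative computations---while the paper's monotonicity arguments are self-contained and make the strictness at the endpoints transparent. Both are valid.
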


\begin{proof}
The inequality $\mu_1\cos c> b/a$ can be written into the form 
\[
c\left(\frac{\cos c}{\sin c}+\frac{1}{\pi-c}\right)>0.
\]
Since $ab<0$, the value $c$ defined in \eqref{ecu:defmu1yc} belongs to the open interval $(0,\pi)$ and so the previous inequality is transformed in 
\[
(\pi-c)\cos c+\sin c>0.
\]
Taking into account that the function $g(s)=(\pi-s)\cos s+\sin s$ is strictly decreasing, $g(0)=\pi$, and $g(\pi)=0$, the inequality $\mu_1\cos c> b/a$ holds.

Since $c\in(0,\pi)$ and $\mu_1>0$, the periodic function $v_1(t)=x_1+\mu_1\cos t =1-\mu_1\cos (c)+\mu_1\cos t$ satisfies $v_1(c)=1$ and $v'(c)=-\mu_1\sin(c)<0$. Thus, the function $v_1$ lives in the zones $x\geq 1$ and $x<1$. Now, we will prove that this function does not cross the straight line $x=-1$. To this end, we will see that its global minimum $v_1(\pi)=1-\mu_1(1+\cos c)$ is strictly greater than $-1$. That is, we will show that
\[
\frac{c (1+\cos c)}{\sin c}<2.
\]
The function $h(s)=s (1+\cos s)/\sin s$ is strictly decreasing, $\lim_{s\to 0^+}h(s)=2$, and $\lim_{s\to \pi^-}h(s)=0$, and so the result follows.
\end{proof}

\begin{proof}[Proof of Theorem \ref{theo:zerosettildeM}]
First, we will prove the existence of the function $\varphi$ defined in the open interval $(0,1-b/a)$. Further on, we will extend the definition of function $\varphi$ to the endpoints of this interval. 

Suppose that $x\in(0,1-b/a)$ is fixed. It is direct to see that $\sgn\left( M(x,0) \right)=\sgn(f(x))=\sgn(b)$. From item \eqref{item6proptildeM} of Proposition \ref{prop:tildeM}, it has 
$\sgn\left( M(x,\mu)\right)=\sgn(a)$ when $\mu$ is sufficiently large. Since $ab<0$, by continuity of function $ M $, there exists a value $\mu(x)>0$ such that $ M(x,\mu(x))=0$. By using Remark \ref{rema:derMmu} and item \eqref{itemdermuproptildeM} of Proposition \ref{prop:tildeM}, it easy to conclude that the value $\mu(x)$ is unique and so we get a unique function $\varphi: (0,1-b/a)\longrightarrow (0,+\infty)$ such that if $x\in(0,1-b/a)$, $\mu>0$ and $ M(x,\mu)=0$, then $\mu=\varphi(x)$. Moreover, from item \eqref{item5proptildeM} of Proposition \ref{prop:tildeM}, if $x,\mu>0$, then $M(x,\mu)=0$ if and only if $(x,\mu)\in Z_1$. Finally, by \eqref{item1proptildeM} and \eqref{item2proptildeM} of Proposition~\ref{prop:tildeM}, we obtain~\eqref{eq:M1}.

Now, we will prove items \eqref{item1thM}-\eqref{item3thM}. 

\begin{enumerate}
\item Notice that, by using item \eqref{item3proptildeM} of Proposition \ref{prop:tildeM}, the function $v(t)=x-\varphi(x)\cos t$, with $x\in(0,1-b/a)$, is not one--zonal, that is,  $\emptyset\ne A_{in}(x,\varphi(x))\varsubsetneq [0,2\pi]$, and so, from item \eqref{itemdermuproptildeM} of  Proposition~\ref{prop:tildeM}, $ M_\mu(x,\varphi(x))\ne0$. This implies, from the Implicit Function Theorem, that function $\varphi$ is differentiable in the open interval $(0,1-b/a)$ and so item \eqref{item1thM} is true. 
\item In the following we will prove that the function $\varphi$ has a unique critical point in the open interval $(0,1-b/a)$. For this purpose, we will consider the periodic function $w(t)=x+\varphi(x)\cos t$.
Note that function $w$ is not one--zonal, its global maximum is $w(0)=x+\varphi(x)$ and its global minimum is $w(\pi)=x-\varphi(x)$. 

If function $w$ is two--zonal, then there exists $t_1=t_1(x)\in(0,\pi)$ such that
\begin{equation}
\label{eq:t12z}
x+\varphi(x) \cos t_1=1 \qquad \mbox{with} \qquad x-\varphi(x)=-1+z,
\end{equation}
 where $z\in(0,2)$. Thus, the set $ A_{in}$ defined in \eqref{ecu:Atilde} is  $ A_{in}(x,-\varphi(x))=[t_1,2\pi-t_1]$ and the sets $A^\pm$ given in \eqref{eq: A+-} are $A^+(x,-\varphi(x))=[0,\hat t_1]\cup[2\pi-\hat t_1,2\pi]$ and $A^-(x,-\varphi(x))=\emptyset$. Therefore, by using expressions \eqref{ecu:Mtildeversion} and \eqref{eq:dertildeMmu3z}, it is deduced that 
\begin{equation}
\label{2ecu:2z}
0= M(x,-\varphi (x))= x  M_x(x,-\varphi (x))+2 \varphi(x)(a-b)  \sin t_1 -2(a-b) t_1.
\end{equation}

If function $\varphi$ possesses a critical point $x_1\in(0,1-b/a)$, with $w(t)=x_1+\varphi(x_1)\cos t$ two--zonal, then $ M_x(x_1,-\varphi (x_1))=0$ and the relationship \eqref{2ecu:2z} is 
\begin{equation}
\label{ecu:ecut1yfi}
\varphi(x_1)\sin t_1 =t_1.
\end{equation}

Taking into account that $ A_{in}(x_1,-\varphi(x))=[t_1,2\pi-t_1]$, by means of expression  \eqref{ecu:DerMtildex}, $ M_x(x_1,-\varphi (x_1))=0$ if and only if $t_1=c$, with $c$ given in \eqref{ecu:defmu1yc}. Thus, from expression \eqref{ecu:ecut1yfi}, $\varphi(x_1)=\mu_1$, where $\mu_1$ is defined in \eqref{ecu:defmu1yc} and, from the first expression of \eqref{eq:t12z} for $x=x_1$,  it follows $x_1=1-\varphi(x_1)\cos t_1=1-\mu_1\cos c$. Notice that, from Lemma \ref{lema:v1}, $x_1\in(0,1-b/a)$ and the function $v_1=x_1+\mu_1\cos(t)$ is two--zonal. 

When function $w$ is three--zonal, there exist $\hat t_1=\hat t_1(x),\hat t_2=t_2(x)$ (with $0<\hat t_1<\hat t_2<\pi$), such that $w(\hat t_1)=1$ and $w(\hat t_2)=-1$, that is, 
\begin{equation}
\label{eq:t1t23zb}
x+\varphi(x) \cos \hat t_1=1 \qquad \mbox{and} \qquad x+\varphi(x) \cos \hat t_2=-1.
\end{equation}
Hence, the set $ A_{in}$ defined in \eqref{ecu:Atilde} is  $ A_{in}(x,-\varphi(x))=[\hat t_1,\hat t_2]\cup [2\pi-\hat t_2,2\pi-\hat t_1]$ and the sets $A^\pm$ given in \eqref{eq: A+-} are $A^+(x,-  \varphi(x))=[0,\hat t_1]\cup[2\pi-\hat t_1,2\pi]$ and $A^-(x,-\varphi(x))=[\hat t_1,\hat t_2]$. Therefore, by using expressions \eqref{ecu:Mtildeversion} and \eqref{eq:dertildeMmu3z}, it is deduced that 
\begin{equation}
\label{3ecu:z}
0= M(x,-\varphi (x))= x  M_x(x,-\varphi (x))-2 \varphi(x)(a-b)\left( \sin \hat t_2- \sin \hat t_1 \right)-2(a-b)(\hat t_1+\hat t_2-\pi).
\end{equation}

If function $\varphi$ possesses a critical point $x_1\in(0,1-b/a)$, with $w(t)=x_1+\varphi(x_1)\cos t$ three--zonal, then $ M_x(x_1,-\varphi (x_1))=0$ and the relationship \eqref{3ecu:z} is 
\begin{equation}
\label{ecu:ecut1yt2}
-\varphi(x_1)\left( \sin \hat t_2- \sin \hat t_1 \right)=(\hat t_1+\hat t_2-\pi)
\end{equation}

By means of the expressions of \eqref{eq:t1t23zb}, it follows
\begin{equation}
\label{ecu:ecut1yt2b}
\varphi(x_1)\left( \cos \hat t_2- \cos \hat t_1 \right)=-2.
\end{equation}

From expressions \eqref{ecu:ecut1yt2} and \eqref{ecu:ecut1yt2b}, one has 
\[
-4\varphi(x_1)\sin\left(\frac{\hat t_2-\hat t_1}{2} \right)\left[ \cos\left(\frac{\hat t_2+\hat t_1}{2} \right)+\left(\frac{\hat t_2+\hat t_1}{2}-\frac{\pi}{2} \right) \sin\left(\frac{\hat t_2+\hat t_1}{2} \right)\right]=0
\]
Since $\varphi(x_1)>0$, $0<\hat t_1<\hat t_2<\pi$ and the unique solution $s\in(0,\pi)$ of equation $\cos s+(s-\pi/2)\sin s=0$ is $s=\pi$, it follows that $\hat t_1+\hat t_2=\pi$ and from expressions of  \eqref{eq:t1t23zb} for $x=x_1$, it deduces $x_1=0$. Because this is impossible, then the function $\varphi$ has no critical points corresponding to three-zonal functions $w$.

This concludes that  $x_1 = 1 - \mu_1 \cos(c)$ is effectively the unique critical point of the function $\varphi$. The sign of the derivative $\varphi$ is a  direct consequence of the continuity and the signs of $ M_x$ and $ M_\mu$ established in Remark \ref{rema:derMmu} and Proposition \ref{prop:tildeM}

\item From  the monotony of $\varphi$ in the intervals $(0,x_1)$ and $(x_1,1-b/a)$, it is direct to see that the limits 
\[
\lim_{x\to 1-\frac{b}{a}^-}\varphi(x) \qquad \mbox{and} \qquad \lim_{x\to 0^+}\varphi(x)
\]
exist. Moreover, by means of Remark \ref{rema:unizonaspertur}, one has $\lim_{x\to1-\frac{b}{a}^-}\varphi(x)=-b/a$ and $L=\lim_{x\to 0^+}\varphi(x)>-b/a$. Therefore, by defining 
$\varphi(1-b/a)=-b/a$ and $\varphi(0)=L$,  it follows that $\varphi$ is continuous in $[0,1-b/a]$. In addition, since the line $\left\{(0,\mu)\in\mathbb{R}^2:\mu\in\mathbb{R} \right\}$ is contained in the zero set of function $ M$,  one has that $ M_x(0,-\varphi(0))=0$ and $w(t)=\varphi(0)\cos t $ is three--zonal. By following the reasoning in item (2), it follows that $\hat t_1+\hat t_2=\pi$, from expression \eqref{ecu:DerMtildex}, it has  $\hat t_1=c/2$, and, from \eqref{eq:t1t23zb}, one deduces $\varphi(0)=\mu_2$, with $\mu_2$ given in \eqref{ecu:defmu1yc}. 
\end{enumerate}
\end{proof}

\begin{figure}[h]
\begin{center}
\includegraphics[height=65mm]{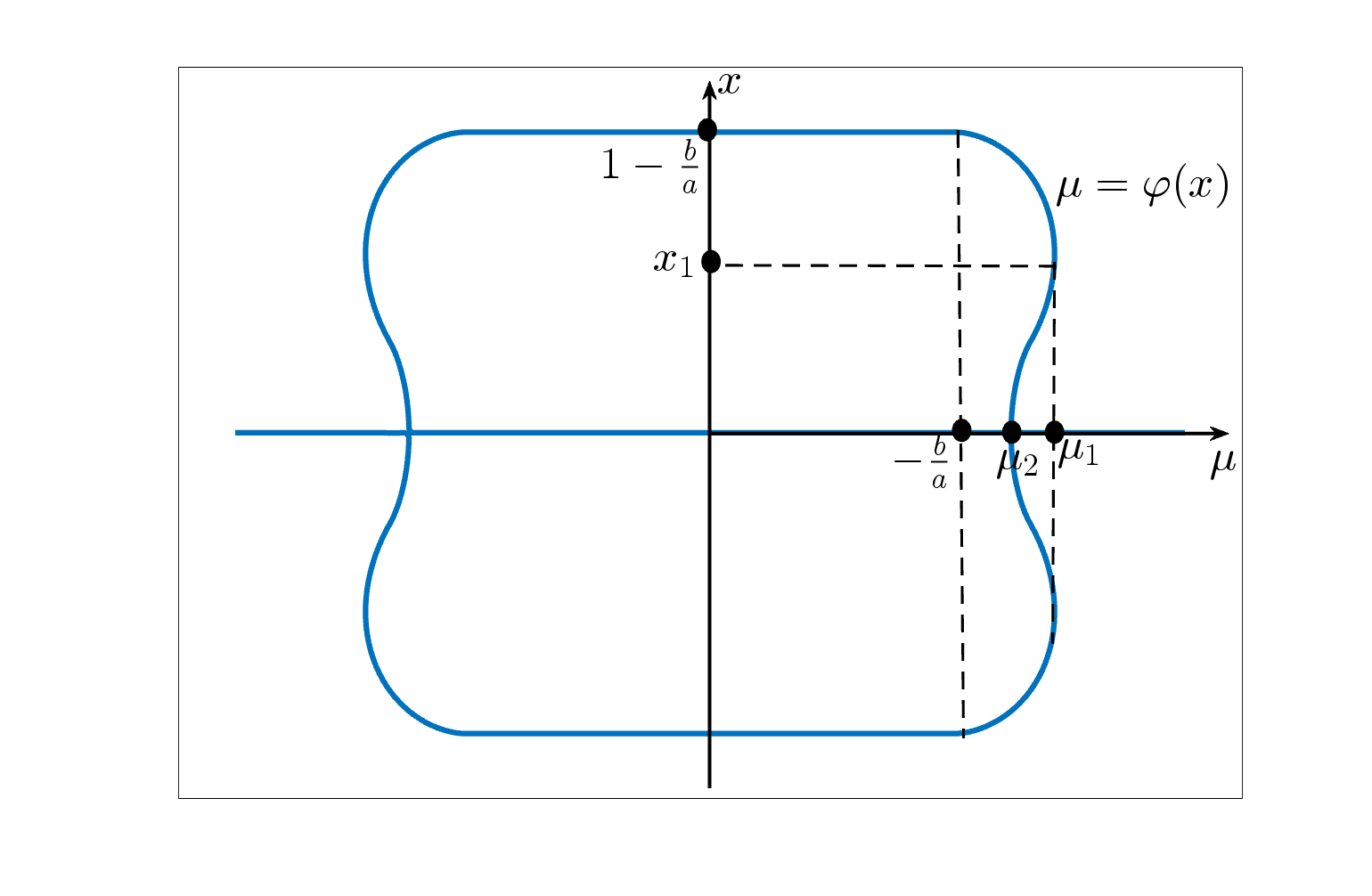}
\end{center}
\caption{Zero set of the function $ M$ defined in \eqref{ecu:Melnikovfuntilde} for $ab<0$. The values $\mu_1$ and $\mu_2$ are given in \eqref{ecu:defmu1yc}, and the value $x_1$ and the existence of $\varphi$ are given in Theorem \ref{theo:zerosettildeM}.}\label{fig:zeroset}
\end{figure}

\subsection{Proof of Theorem \ref{theo:Melnikov}} 
By means of the definition of $\mu_1$ and $\mu_2$ given in \eqref{ecu:defmu1yc} one obtains  $0<\mu_2<\mu_1$. From the characterization of the zero set of the function $ M$ established in Theorem \ref{theo:zerosettildeM}, it follows directly that this set exhibits the structure depicted in Figure \ref{fig:zeroset}. Hence, one can deduce that:
\begin{enumerate}
    \item if $0\leq |\mu|<\mu_2$, then function $ M(x,\mu)$ has exactly three simple zeros,
    \item if $\mu_2< |\mu|<\mu_1$, then function $ M(x,\mu)$ has exactly five simple zeros,
    \item if $|\mu|>\mu_1$, then function $ M(x,\mu)$ has exactly one simple zero.
\end{enumerate}
By Proposition \ref{prop:existclMel}, for each simple zero of $ M$, \eqref{eq:ef} has a limit cycle for $\epsilon$ small enough, obtaining the corresponding lower bounds. To prove that they are also upper bounds in this case, it suffices to note that there exists $K>0$ depending on $\mu$ such that for $\epsilon$ small enough, initial conditions of limit cycles of \eqref{eq:ef} are contained in $[-K,K]$. As simple zeroes of $ M$ produce exactly one limit cycle for $\epsilon$ close, and there is no limit cycle for $\epsilon$ small in a neighborhood of $x$ whenever $ M(x,\mu)\neq 0$, we conclude.

\end{document}